
\documentclass[twoside,11pt,reqno]{amsart}
\usepackage{amsmath,amsthm,amssymb,amstext,amsfonts,amscd}
\usepackage{graphicx}
\usepackage{multirow}

\setcounter{MaxMatrixCols}{10}

\numberwithin{equation}{section}
\newtheorem{theorem}{Theorem}[section]
\newtheorem{lemma}[theorem]{Lemma}
\newtheorem{definition}[theorem]{Definition}
\newtheorem{proposition}[theorem]{Proposition}

\allowdisplaybreaks

\begin{document}
\title[Growth of solutions of linear differential equations]{Growth
estimates of solutions of linear differential equations with dominant
coefficient of lower $(\alpha ,\beta ,\gamma )$-order}
\author[B. Bela\"{\i}di ]{Benharrat Bela\"{\i}di}
\address{B. Bela\"{\i}di: Department of Mathematics, Laboratory of Pure and
Applied Mathematics, University of Mostaganem (UMAB), B. P. 227
Mostaganem-(Algeria).}
\email{benharrat.belaidi@univ-mosta.dz}
\thanks{}
\keywords{Differential equations, $(\alpha ,\beta ,\gamma )$-order, lower $%
(\alpha ,\beta ,\gamma )$-order, $(\alpha ,\beta ,\gamma )$-type, lower $%
(\alpha ,\beta ,\gamma )$-type, growth of solutions.\\
{\small AMS Subject Classification\ }$(2010)${\small : }30D35, 34M10.}

\begin{abstract}
In this paper, we deal with the growth and oscillation of solutions of
higher order linear differential equations. Under the conditions that there
exists a coefficient which dominates the other coefficients by its lower $%
(\alpha ,\beta ,\gamma )$-order and lower $(\alpha ,\beta ,\gamma )$-type,
we obtain some growth and oscillation properties of solutions of such
equations which improve and extend some recently results of the author and
Biswas \cite{b8}.
\end{abstract}

\maketitle

\section{\textbf{Introduction}}

\qquad Throughout this paper, we assume that the reader is familiar with the
fundamental results and the standard notations of the Nevanlinna value
distribution theory of meromorphic functions \cite{go,1, 3}.

\qquad Nevanlinna theory has appeared to be a powerful tool in the field of
complex differential equations. For an introduction to the theory of
differential equations in the complex plane by using the Nevanlinna theory
see \cite{19}$.$ Active research in this field was started by Wittich \cite%
{w1,w2} and his students in the 1950's and 1960's. After their many authors
have investigated the complex differential equations%
\begin{equation}
f^{(k)}(z)+A_{k-1}(z)f^{(k-1)}(z)+\cdots +A_{1}(z)f^{\prime
}(z)+A_{0}(z)f(z)=0,  \label{1.1}
\end{equation}%
\begin{equation}
f^{(k)}(z)+A_{k-1}(z)f^{(k-1)}(z)+\cdots +A_{1}(z)f^{\prime
}(z)+A_{0}(z)f(z)=F  \label{1.2}
\end{equation}%
and achieved many valuable results when the coefficients $%
A_{0}(z),...,A_{k-1}(z),$ $(k\geq 2)$ and $F(z)$ in (\ref{1.1}) and (\ref%
{1.2}) are entire functions of finite order or finite iterated $p$-order or $%
(p,q)$-th order or $(p,q)$-$\varphi $ order; see (\cite{b1}, \cite{b2}, \cite%
{b3}, \cite{b}, \cite{bou}, \cite{c1}, \cite{gg}, \cite{20}, \cite{13}, \cite%
{19}, \cite{10}, \cite{11}, \cite{l}, \cite{STX}, \cite{t2}, \cite{t3}, \cite%
{15}).

\qquad Chyzhykov and Semochko \cite{CS} showed that both definitions of
iterated $p$-order (\cite{gl}, \cite{13}, \cite{DS}, \cite{SC}) and the $%
(p,q)$-th order (\cite{8}, \cite{9}) have the disadvantage that they do not
cover arbitrary growth (see \cite[Example 1.4]{CS}). They used more general
scale, called the $\varphi $-order (see \cite{CS}, \cite{S}) and the concept
of $\varphi $-order is used to study the growth of solutions of complex
differential equations in the whole complex plane and in the unit disc which
extend and improve many previous results (see \cite{b4, b5, CS,k,k1,S}).
Extending this notion, Long et al. \cite{L} recently introduce the concepts
of $[p,q]_{,\varphi }$-order and $[p,q]_{,\varphi }$-type (see \cite{L}) and
obtain some interesting results which considerably extend and improve some
earlier results. For details\ one may see \cite{L}.

\qquad The concept of generalized order $(\alpha ,\beta )$ of an entire
function was introduced by Sheremeta \cite{MNS}. Several authors made close
investigations on the properties of entire functions related to generalized
order $(\alpha ,\beta )$ in some different direction \cite{tb1,tb2}. On the
other hand, Mulyava et al. \cite{MST} have used the concept of $(\alpha
,\beta )$-order of an entire function in order to investigate the properties
of solutions of a heterogeneous differential equation of the second order
and obtained several remarkable results. For details about $(\alpha ,\beta )$%
-order\ one may see \cite{MST, MNS}.

\qquad Now, let $L$ be a class of continuous non-negative on $(-\infty
,+\infty )$ function $\alpha $ such that $\alpha (x)=\alpha (x_{0})\geq 0$
for $x\leq x_{0}$ and $\alpha (x)\uparrow +\infty $ as $x_{0}\leq
x\rightarrow +\infty $. We say that $\alpha \in L_{1}$, if $\alpha \in L$
and $\alpha (a+b)\leq \alpha (a)+\alpha (b)+c$ for all $a,b\geq R_{0}$ and
fixed $c\in (0,+\infty )$. Further, we say that $\alpha \in L_{2}$, if $%
\alpha \in L$ and $\alpha (x+O(1))=(1+o(1))\alpha (x)$ as $x\rightarrow
+\infty $. Finally, $\alpha \in L_{3}$, if $\alpha \in L$ and $\alpha
(a+b)\leq \alpha (a)+\alpha (b)$ for all $a,b\geq R_{0},$ i.e.$,$ $\alpha $
is subadditive. Clearly $L_{3}\subset L_{1}$.

\qquad Particularly, when $\alpha \in L_{3}$, then one can easily verify
that $\alpha (mr)\leq m\alpha (r),$ $m\geq 2$ is an integer. Up to a
normalization, subadditivity is implied by concavity. Indeed, if $\alpha (r)$
is concave on $[0,+\infty )$ and satisfies $\alpha (0)\geq 0$, then for $%
t\in \lbrack 0,1]$,%
\begin{equation*}
\alpha (tx)=\alpha (tx+(1-t)\cdot 0)\geq t\alpha (x)+(1-t)\alpha (0)\geq
t\alpha (x)\text{,}
\end{equation*}%
so that by choosing $t=\frac{a}{a+b}$ or $t=\frac{b}{a+b}$,%
\begin{eqnarray*}
\alpha (a+b) &=&\frac{a}{a+b}\alpha (a+b)+\frac{b}{a+b}\alpha (a+b) \\
&\leq &\alpha \left( \frac{a}{a+b}(a+b)\right) +\alpha \left( \frac{b}{a+b}%
(a+b)\right) \\
&=&\alpha (a)+\alpha (b)\text{, \ \ }a,b\geq 0\text{.}
\end{eqnarray*}%
As a non-decreasing, subadditive and unbounded function, $\alpha (r)$
satisfies%
\begin{equation*}
\alpha (r)\leq \alpha (r+R_{0})\leq \alpha (r)+\alpha (R_{0})
\end{equation*}%
for any $R_{0}\geq 0$. This yields that $\alpha (r)\sim \alpha (r+R_{0})$ as
$r\rightarrow +\infty $.

\qquad Let $\alpha ,$ $\beta $ and $\gamma $ satisfy the following two
conditions : (i) Always $\alpha \in L_{1},$ $\beta \in L_{2}$ and $\gamma
\in L_{3}$; and (ii) $\alpha (\log ^{[p]}x)=o(\beta (\log \gamma (x))),$ $%
p\geq 2,$ $\alpha (\log x)=o(\alpha \left( x\right) )$ and $\alpha
^{-1}(kx)=o\left( \alpha ^{-1}(x)\right) $ $\left( 0<k<1\right) $ as $%
x\rightarrow +\infty $.

\qquad Throughout this paper, we assume that $\alpha ,$ $\beta $ and $\gamma
$ always satisfy the above two conditions unless otherwise specifically
stated.

\qquad Recently, Heittokangas et al. \cite{HWWY} have introduced a new
concept of $\varphi $-order of entire and meromorphic functions considering $%
\varphi $ as subadditive function. For details\ one may see \cite{HWWY}.
Extending this notion, recently the author and Biswas \cite{b6} introduce
the definition of the $(\alpha ,\beta ,\gamma )$-order of a meromorphic
function.

\qquad The main aim of this paper is to study the growth and oscillation of
solutions of higher order linear differential equations using the concepts
of lower $(\alpha ,\beta ,\gamma )$-order and lower $(\alpha ,\beta ,\gamma
) $-type. In fact, some works relating to study the growth of solutions of
higher order linear differential equations using the concepts of $(\alpha
,\beta ,\gamma )$-order have been explored in \cite{b6}, \cite{b7} and \cite%
{b8}. In this paper, we obtain some results which improve and generalize
some previous results of the author and Biswas \cite{b8}.

\qquad For $x\in \lbrack 0,+\infty )$ and $k\in \mathbb{N}$ where $%
\mathbb{N}
$ is the set of all positive integers, define{\ iterations of the
exponential and logarithmic functions as }$\exp ^{[k]}x=\exp (\exp
^{[k-1]}x) $ and $\log ^{[k]}x=\log (\log ^{[k-1]}x)$ {with convention that $%
\log ^{[0]}x=x$, $\log ^{[-1]}x=\exp {x}$, $\exp ^{[0]}x=x$ and $\exp
^{[-1]}x=\log x$.}

\begin{definition}
\label{d1.1}(\cite{b6}) The $(\alpha ,\beta ,\gamma )$-order denoted by $%
\rho _{(\alpha ,\beta ,\gamma )}[f]$ of a meromorphic function $f$ is
defined by%
\begin{equation*}
\rho _{(\alpha ,\beta ,\gamma )}[f]=\underset{r\rightarrow +\infty }{\lim
\sup }\frac{\alpha \left( \log T\left( r,f\right) \right) }{\beta \left(
\log \gamma \left( r\right) \right) }\text{,}
\end{equation*}%
and for an entire function $f$, we define
\begin{equation*}
\rho _{(\alpha ,\beta ,\gamma )}[f]=\underset{r\rightarrow +\infty }{\lim
\sup }\frac{\alpha \left( \log T\left( r,f\right) \right) }{\beta \left(
\log \gamma \left( r\right) \right) }=\underset{r\rightarrow +\infty }{\lim
\sup }\frac{\alpha (\log ^{[2]}M(r,f))}{\beta \left( \log \gamma \left(
r\right) \right) }.
\end{equation*}
\end{definition}

\qquad Similar to Definition \ref{d1.1}, one can also define the lower $%
(\alpha ,\beta ,\gamma )$-order of a meromorphic function $f$ in the
following way:

\begin{definition}
\label{d1.2} The lower $(\alpha ,\beta ,\gamma )$-order denoted by $\mu
_{(\alpha ,\beta ,\gamma )}[f]$ of a meromorphic function $f$ is defined by%
\begin{equation*}
\mu _{(\alpha ,\beta ,\gamma )}[f]=\underset{r\rightarrow +\infty }{\lim
\inf }\frac{\alpha \left( \log T\left( r,f\right) \right) }{\beta \left(
\log \gamma \left( r\right) \right) }\text{,}
\end{equation*}%
for an entire function $f$, one can easily by Theorem 7.1 in \cite{go}
verify that
\begin{equation*}
\mu _{(\alpha ,\beta ,\gamma )}[f]=\underset{r\rightarrow +\infty }{\lim
\inf }\frac{\alpha \left( \log T\left( r,f\right) \right) }{\beta \left(
\log \gamma \left( r\right) \right) }=\underset{r\rightarrow +\infty }{\lim
\inf }\frac{\alpha (\log ^{[2]}M(r,f))}{\beta \left( \log \gamma \left(
r\right) \right) }\text{.}
\end{equation*}
\end{definition}

\begin{proposition}
\label{p1.1}(\cite{b6}) If $f$ is an entire function, then%
\begin{equation*}
\rho _{(\alpha (\log ),\beta ,\gamma )}[f]=\underset{r\rightarrow +\infty }{%
\lim \sup }\frac{\alpha (\log ^{[2]}T(r,f))}{\beta \left( \log \gamma \left(
r\right) \right) }=\underset{r\rightarrow +\infty }{\lim \sup }\frac{\alpha
(\log ^{[3]}M(r,f))}{\beta \left( \log \gamma \left( r\right) \right) }\text{%
,}
\end{equation*}%
and also by Theorem 7.1 in \cite{go}, one can easily verify that%
\begin{equation*}
\mu _{(\alpha (\log ),\beta ,\gamma )}[f]=\underset{r\rightarrow +\infty }{%
\lim \inf }\frac{\alpha (\log ^{[2]}T(r,f))}{\beta \left( \log \gamma \left(
r\right) \right) }=\underset{r\rightarrow +\infty }{\lim \inf }\frac{\alpha
(\log ^{[3]}M(r,f))}{\beta \left( \log \gamma \left( r\right) \right) }\text{%
,}
\end{equation*}%
where $(\alpha (\log ),\beta ,\gamma )$-order denoted by $\rho _{(\alpha
(\log ),\beta ,\gamma )}[f]$ and lower $(\alpha (\log ),\beta ,\gamma )$%
-order denoted by $\mu _{(\alpha (\log ),\beta ,\gamma )}[f]$.
\end{proposition}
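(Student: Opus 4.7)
The plan is to derive both equalities from the classical Poisson--Jensen type inequality (Theorem 7.1 in \cite{go}) for an entire function $f$,
\[
T(r,f)\;\le\;\log^{+}M(r,f)\;\le\;\frac{R+r}{R-r}\,T(R,f),\qquad 0<r<R,
\]
specialized to $R=2r$, which gives $T(r,f)\le \log M(r,f)\le 3\,T(2r,f)$ for $r$ large. Note that the first equality in the proposition is essentially the \emph{definition} of $\rho_{(\alpha(\log),\beta,\gamma)}[f]$ obtained by replacing $\alpha$ in Definition \ref{d1.1} by the composition $\alpha\circ\log$; the content of the proposition is the coincidence of the $T(r,f)$-form with the $M(r,f)$-form. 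I would prove the $\limsup$ version; the $\liminf$ version is word-for-word the same.

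\textbf{Step 1 (easy direction).} Applying $\log^{[2]}$ to $T(r,f)\le\log M(r,f)$ yields $\log^{[2]}T(r,f)\le\log^{[3]}M(r,f)$, and since $\alpha$ is non-decreasing,
\[
\alpha(\log^{[2]}T(r,f))\le \alpha(\log^{[3]}M(r,f)).
\]
Dividing by $\beta(\log\gamma(r))$ and taking $\limsup_{r\to\infty}$ produces one of the two required inequalities.

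\textbf{Step 2 (reverse direction).} Applying $\log^{[2]}$ to $\log M(r,f)\le 3\,T(2r,f)$ and noting that $\log(\log 3+\log T(2r,f))\le \log 2+\log^{[2]}T(2r,f)$ for $r$ large, I obtain
\[
\log^{[3]}M(r,f)\;\le\;\log 2+\log^{[2]}T(2r,f).
\]
Since $\alpha\in L_{1}$,
\[
\alpha(\log^{[3]}M(r,f))\;\le\;\alpha(\log 2)+\alpha(\log^{[2]}T(2r,f))+c.
\]
Dividing by $\beta(\log\gamma(r))$, the bounded contributions disappear because $\beta(\log\gamma(r))\to+\infty$.

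\textbf{Step 3 (the main obstacle: matching arguments).} The step that actually requires the hypotheses on $\alpha,\beta,\gamma$ is to replace $T(2r,f)$ by $T(r,f)$, i.e.\ to show
\[
\limsup_{r\to\infty}\frac{\alpha(\log^{[2]}T(2r,f))}{\beta(\log\gamma(r))}\;=\;\limsup_{r\to\infty}\frac{\alpha(\log^{[2]}T(r,f))}{\beta(\log\gamma(r))}.
\]
Since $\gamma\in L_{3}$ is subadditive, $\gamma(2r)\le 2\gamma(r)$, hence $\log\gamma(2r)=\log\gamma(r)+O(1)$ as $r\to\infty$. Because $\beta\in L_{2}$, this gives $\beta(\log\gamma(2r))=(1+o(1))\beta(\log\gamma(r))$. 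A change of variable $r'=2r$ then converts the numerator from $T(2r,f)$ to $T(r',f)$ while the denominator ratio tends to $1$, yielding the desired identity. Combining the two directions gives $\rho_{(\alpha(\log),\beta,\gamma)}[f]=\limsup_{r\to\infty}\alpha(\log^{[3]}M(r,f))/\beta(\log\gamma(r))$; the parallel argument with $\liminf$ furnishes the second assertion.
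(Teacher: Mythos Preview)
Your proposal is correct and follows precisely the route the paper indicates: the paper does not give a detailed proof of Proposition~\ref{p1.1} but cites \cite{b6} and points to Theorem~7.1 in \cite{go} (the inequality $T(r,f)\le\log^{+}M(r,f)\le\frac{R+r}{R-r}T(R,f)$), which is exactly the tool you use with $R=2r$. Your handling of the doubling $2r\mapsto r$ via $\gamma\in L_{3}$ (subadditivity, hence $\gamma(2r)\le 2\gamma(r)$) and $\beta\in L_{2}$ (so $\beta(\log\gamma(2r))=(1+o(1))\beta(\log\gamma(r))$) is the standard manoeuvre in this framework and matches how the paper treats analogous passages elsewhere (see, e.g., the proof of Lemma~\ref{l3.2}).
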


\qquad Now to compare the relative growth of two meromorphic functions
having same non zero finite $(\alpha ,\beta ,\gamma )$-order or non zero
finite lower $(\alpha ,\beta ,\gamma )$-order, one may introduce the
definitions of $(\alpha ,\beta ,\gamma )$-type and lower $(\alpha ,\beta
,\gamma )$-type in the following manner:

\begin{definition}
\label{d1.3}(\cite{b8}) The $(\alpha ,\beta ,\gamma )$-type denoted by $\tau
_{(\alpha ,\beta ,\gamma )}[f]$ of a meromorphic function $f$ with $0<\rho
_{(\alpha ,\beta ,\gamma )}[f]<+\infty $ is defined by%
\begin{equation*}
\tau _{(\alpha ,\beta ,\gamma )}[f]=\underset{r\rightarrow +\infty }{\lim
\sup }\frac{\exp (\alpha \left( \log T\left( r,f\right) \right) )}{\left(
\exp \left( \beta \left( \log \gamma \left( r\right) \right) \right) \right)
^{\rho _{(\alpha ,\beta ,\gamma )}[f]}}\text{.}
\end{equation*}%
If $f$ is an entire function with $\rho _{(\alpha ,\beta ,\gamma )}[f]\in
(0,+\infty )$, then the $(\alpha ,\beta ,\gamma )$-type of $f$ is defined by%
\begin{equation*}
\tau _{(\alpha ,\beta ,\gamma ),M}[f]=\underset{r\rightarrow +\infty }{\lim
\sup }\frac{\exp (\alpha (\log ^{[2]}M(r,f)))}{\left( \exp \left( \beta
\left( \log \gamma \left( r\right) \right) \right) \right) ^{\rho _{(\alpha
,\beta ,\gamma )}[f]}}\text{.}
\end{equation*}
\end{definition}

\begin{definition}
\label{d1.4} The lower $(\alpha ,\beta ,\gamma )$-type denoted by $%
\underline{\tau }_{(\alpha ,\beta ,\gamma )}[f]$ of a meromorphic function $%
f $ with $0<\mu _{(\alpha ,\beta ,\gamma )}[f]<+\infty $ is defined by%
\begin{equation*}
\underline{\tau }_{(\alpha ,\beta ,\gamma )}[f]=\underset{r\rightarrow
+\infty }{\lim \inf }\frac{\exp (\alpha \left( \log T\left( r,f\right)
\right) )}{\left( \exp \left( \beta \left( \log \gamma \left( r\right)
\right) \right) \right) ^{\mu _{(\alpha ,\beta ,\gamma )}[f]}}\text{.}
\end{equation*}%
If $f$ is an entire function with $\mu _{(\alpha ,\beta ,\gamma )}[f]\in
(0,+\infty )$, then the lower $(\alpha ,\beta ,\gamma )$-type of $f$ is
defined by%
\begin{equation*}
\underline{\tau }_{(\alpha ,\beta ,\gamma ),M}[f]=\underset{r\rightarrow
+\infty }{\lim \inf }\frac{\exp (\alpha (\log ^{[2]}M(r,f)))}{\left( \exp
\left( \beta \left( \log \gamma \left( r\right) \right) \right) \right)
^{\mu _{(\alpha ,\beta ,\gamma )}[f]}}\text{.}
\end{equation*}
\end{definition}

\qquad In order to study the oscillation properties of solutions of (\ref%
{1.1}) and (\ref{1.2}), we define the $(\alpha ,\beta ,\gamma )$-exponent
convergence of the zero-sequence of a meromorphic function $f$ in the
following way:

\begin{definition}
\label{d1.5}(\cite{b6}) The $(\alpha ,\beta ,\gamma )$-exponent convergence
of the zero-sequence denoted by $\lambda _{(\alpha ,\beta ,\gamma )}[f]$ of
a meromorphic function $f$ is defined by%
\begin{equation*}
\lambda _{(\alpha ,\beta ,\gamma )}[f]=\underset{r\rightarrow +\infty }{\lim
\sup }\frac{\alpha (\log n(r,1/f))}{\beta (\log \gamma (r))}=\underset{%
r\rightarrow +\infty }{\lim \sup }\frac{\alpha (\log N(r,1/f))}{\beta (\log
\gamma (r))}\text{.}
\end{equation*}%
Analogously, the $(\alpha ,\beta ,\gamma )$-exponent convergence of the
distinct zero-sequence denoted by $\overline{\lambda }_{(\alpha ,\beta
,\gamma )}[f]$ of $f$ is defined by%
\begin{equation*}
\overline{\lambda }_{(\alpha ,\beta ,\gamma )}[f]=\underset{r\rightarrow
+\infty }{\lim \sup }\frac{\alpha (\log \overline{n}(r,1/f))}{\beta (\log
\gamma (r))}=\underset{r\rightarrow +\infty }{\lim \sup }\frac{\alpha (\log
\overline{N}(r,1/f))}{\beta (\log \gamma (r))}\text{.}
\end{equation*}
\end{definition}

\qquad Accordingly, the values%
\begin{equation*}
\lambda _{(\alpha (\log ),\beta ,\gamma )}[f]=\underset{r\rightarrow +\infty
}{\lim \sup }\frac{\alpha (\log ^{[2]}n(r,1/f))}{\beta (\log \gamma (r))}=%
\underset{r\rightarrow +\infty }{\lim \sup }\frac{\alpha (\log
^{[2]}N(r,1/f))}{\beta (\log \gamma (r))}\text{ }
\end{equation*}%
and%
\begin{equation*}
\overline{\lambda }_{(\alpha (\log ),\beta ,\gamma )}[f]=\underset{%
r\rightarrow +\infty }{\lim \sup }\frac{\alpha (\log ^{[2]}\overline{n}%
(r,1/f))}{\beta (\log \gamma (r))}=\underset{r\rightarrow +\infty }{\lim
\sup }\frac{\alpha (\log ^{[2]}\overline{N}(r,1/f))}{\beta (\log \gamma (r))}
\end{equation*}%
are respectively called as $(\alpha (\log ),\beta ,\gamma )$-exponent
convergence of the zero-sequence and $(\alpha (\log ),\beta ,\gamma )$%
-exponent convergence of the distinct zero-sequence of a meromorphic
function $f$.\newline
\qquad Similar to Definition \ref{d1.5}, one can also define the lower $%
(\alpha ,\beta ,\gamma )$-exponent convergence of the zero-sequence of a
meromorphic function $f$ in the following way:

\begin{definition}
\label{d1.6} The lower $(\alpha ,\beta ,\gamma )$-exponent convergence of
the zero-sequence denoted by $\underline{\lambda }_{(\alpha ,\beta ,\gamma
)}[f]$ of a meromorphic function $f$ is defined by%
\begin{equation*}
\underline{\lambda }_{(\alpha ,\beta ,\gamma )}[f]=\underset{r\rightarrow
+\infty }{\lim \inf }\frac{\alpha (\log n(r,1/f))}{\beta (\log \gamma (r))}=%
\underset{r\rightarrow +\infty }{\lim \inf }\frac{\alpha (\log N(r,1/f))}{%
\beta (\log \gamma (r))}\text{.}
\end{equation*}%
Analogously, the lower $(\alpha ,\beta ,\gamma )$-exponent convergence of
the distinct zero-sequence denoted by $\underline{\overline{\lambda }}%
_{(\alpha ,\beta ,\gamma )}[f]$ of $f$ is defined by%
\begin{equation*}
\underline{\overline{\lambda }}_{(\alpha ,\beta ,\gamma )}[f]=\underset{%
r\rightarrow +\infty }{\lim \inf }\frac{\alpha (\log \overline{n}(r,1/f))}{%
\beta (\log \gamma (r))}=\underset{r\rightarrow +\infty }{\lim \inf }\frac{%
\alpha (\log \overline{N}(r,1/f))}{\beta (\log \gamma (r))}\text{.}
\end{equation*}
\end{definition}

\qquad Accordingly, the values%
\begin{equation*}
\underline{\lambda }_{(\alpha (\log ),\beta ,\gamma )}[f]=\underset{%
r\rightarrow +\infty }{\lim \inf }\frac{\alpha (\log ^{[2]}n(r,1/f))}{\beta
(\log \gamma (r))}=\underset{r\rightarrow +\infty }{\lim \inf }\frac{\alpha
(\log ^{[2]}N(r,1/f))}{\beta (\log \gamma (r))}\text{ }
\end{equation*}%
and%
\begin{equation*}
\underline{\overline{\lambda }}_{(\alpha (\log ),\beta ,\gamma )}[f]=%
\underset{r\rightarrow +\infty }{\lim \inf }\frac{\alpha (\log ^{[2]}%
\overline{n}(r,1/f))}{\beta (\log \gamma (r))}=\underset{r\rightarrow
+\infty }{\lim \inf }\frac{\alpha (\log ^{[2]}\overline{N}(r,1/f))}{\beta
(\log \gamma (r))}
\end{equation*}%
are respectively called as lower $(\alpha (\log ),\beta ,\gamma )$-exponent
convergence of the zero-sequence and lower $(\alpha (\log ),\beta ,\gamma )$%
-exponent convergence of the distinct zero-sequence of a meromorphic
function $f$.

\begin{proposition}
\label{p1.2}(\cite{b6}) Let $f_{1}(z),$ $f_{2}(z)$ be nonconstant
meromorphic functions with $\rho _{(\alpha (\log ),\beta ,\gamma )}[f_{1}]$
and $\rho _{(\alpha (\log ),\beta ,\gamma )}[f_{2}]$ as their $(\alpha
\left( \log \right) ,\beta ,\gamma )$-order. Then\newline
(i) $\rho _{(\alpha (\log ),\beta ,\gamma )}[f_{1}\pm f_{2}]\leq \max \{\rho
_{(\alpha (\log ),\beta ,\gamma )}[f_{1}],$ $\rho _{(\alpha (\log ),\beta
,\gamma )}[f_{2}]\}$;\newline
(ii) $\rho _{(\alpha (\log ),\beta ,\gamma )}[f_{2}\cdot f_{2}]\leq \max
\{\rho _{(\alpha (\log ),\beta ,\gamma )}[f_{1}],$ $\rho _{(\alpha (\log
),\beta ,\gamma )}[f_{2}]\}$;\newline
(iii) If $\rho _{(\alpha (\log ),\beta ,\gamma )}[f_{1}]\neq \rho _{(\alpha
(\log ),\beta ,\gamma )}[f_{2}]$, then
\begin{equation*}
\rho _{(\alpha (\log ),\beta ,\gamma )}[f_{1}\pm f_{2}]=\max \{\rho
_{(\alpha (\log ),\beta ,\gamma )}[f_{1}],\rho _{(\alpha (\log ),\beta
,\gamma )}[f_{2}]\};
\end{equation*}%
(iv) If $\rho _{(\alpha (\log ),\beta ,\gamma )}[f_{1}]\neq \rho _{(\alpha
(\log ),\beta ,\gamma )}[f_{2}]$, then
\begin{equation*}
\rho _{(\alpha (\log ),\beta ,\gamma )}[f_{2}\cdot f_{2}]=\max \{\rho
_{(\alpha (\log ),\beta ,\gamma )}[f_{1}],\rho _{(\alpha (\log ),\beta
,\gamma )}[f_{2}]\}.
\end{equation*}
\end{proposition}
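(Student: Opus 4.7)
The plan is to lift the standard Nevanlinna inequalities
\[
T(r, f_1 \pm f_2) \leq T(r, f_1) + T(r, f_2) + \log 2, \qquad T(r, f_1 f_2) \leq T(r, f_1) + T(r, f_2),
\]
together with their reverses obtained by writing $f_2 = (f_1+f_2) - f_1$ and $f_2 = (f_1 f_2)\cdot(1/f_1)$, to the scale $\alpha \circ \log^{[2]}$ appearing in the definition of $\rho_{(\alpha(\log),\beta,\gamma)}$. The key leverage comes from the hypothesis $\alpha \in L_1$ (quasi-subadditivity $\alpha(a+b) \leq \alpha(a) + \alpha(b) + c$) and the fact that $\beta(\log \gamma(r)) \to +\infty$ as $r \to +\infty$.

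For parts (i) and (ii), set $M(r) := \max\{T(r,f_1), T(r,f_2)\}$. The classical bounds give $T(r, f_1 \pm f_2) \leq 2 M(r) + \log 2$ and $T(r, f_1 f_2) \leq 2 M(r)$. Applying $\log^{[2]}$ and doing an elementary estimate yields, for $r$ sufficiently large,
\[
\log^{[2]} T(r, f_1 \pm f_2) \leq \log^{[2]} M(r) + K_1, \qquad \log^{[2]} T(r, f_1 f_2) \leq \log^{[2]} M(r) + K_1,
\]
for a constant $K_1 > 0$. The quasi-subadditivity and monotonicity of $\alpha$ then give
\[
\alpha\bigl(\log^{[2]} T(r, f_1 \pm f_2)\bigr) \leq \alpha\bigl(\log^{[2]} M(r)\bigr) + K_2,
\]
and analogously for the product. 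Dividing through by $\beta(\log \gamma(r))$ and passing to the $\limsup$ absorbs $K_2$ and produces (i) and (ii), since the limsup of the right-hand side is at most $\max\{\rho_{(\alpha(\log),\beta,\gamma)}[f_1], \rho_{(\alpha(\log),\beta,\gamma)}[f_2]\}$ by monotonicity of $\alpha \circ \log^{[2]}$.

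For (iii) and (iv), without loss of generality assume $\rho_1 := \rho_{(\alpha(\log),\beta,\gamma)}[f_1] < \rho_{(\alpha(\log),\beta,\gamma)}[f_2] =: \rho_2$. Writing $f_2 = (f_1 + f_2) - f_1$ and applying (i) yields
\[
\rho_2 \leq \max\bigl\{\rho_{(\alpha(\log),\beta,\gamma)}[f_1 + f_2], \rho_1\bigr\}.
\]
Since $\rho_1 < \rho_2$, this forces $\rho_{(\alpha(\log),\beta,\gamma)}[f_1 + f_2] \geq \rho_2$, and combined with (i) gives equality. Part (iv) is entirely analogous: write $f_2 = (f_1 f_2)\cdot(1/f_1)$, note $\rho_{(\alpha(\log),\beta,\gamma)}[1/f_1] = \rho_1$ because $T(r, 1/f_1) = T(r, f_1) + O(1)$, and apply (ii).

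The main obstacle is purely book-keeping: verifying that every $O(1)$ error produced by the Nevanlinna inequalities, and by going from $T(r, f_1 \pm f_2) \leq 2M(r) + \log 2$ to $\log^{[2]} T(r, f_1 \pm f_2) \leq \log^{[2]} M(r) + O(1)$, becomes negligible after composition with $\alpha$ and division by $\beta(\log \gamma(r))$. This is exactly what the hypotheses $\alpha \in L_1$, $\beta \in L_2$, $\gamma \in L_3$ (together with the unboundedness of $\beta$ and $\gamma$) are designed to guarantee, so no deeper ingredient beyond these class properties and the standard Nevanlinna inequalities is required.
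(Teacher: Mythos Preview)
Your argument is correct. Note, however, that the paper does not actually prove this proposition: it is quoted from \cite{b6} and stated without proof here. The nearest in-text analogue is the proof given for Proposition~1.4 (the lower-order version), and there the author proceeds a bit differently from you. Instead of applying $\alpha\circ\log^{[2]}$ directly to the Nevanlinna inequality and invoking the $L_{1}$ quasi-subadditivity of $\alpha$ to absorb the additive $O(1)$ errors, the paper first converts the definitions into pointwise upper bounds of the shape
\[
T(r_{n},f)\leq \exp^{[2]}\bigl\{\alpha^{-1}\bigl((\rho+\varepsilon)\beta(\log\gamma(r_{n}))\bigr)\bigr\}
\]
along a suitable sequence, adds these bounds using $T(r,f+g)\le T(r,f)+T(r,g)+\ln 2$, and then dominates the sum by a single term $3\exp^{[2]}\{\alpha^{-1}((\max\{\rho,\mu\}+\varepsilon)\beta(\log\gamma(r_{n})))\}$ before reading off the order. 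Your route is slightly more economical because it handles the additive error once at the level of $\log^{[2]}$ and lets $\alpha\in L_{1}$ finish the job; the paper's route sidesteps any direct analysis of $\log^{[2]}(A+B)$ by working on the $\exp^{[2]}\circ\alpha^{-1}$ side instead. Both are standard, and the bootstrap for (iii)--(iv) via $f_{2}=(f_{1}+f_{2})-f_{1}$ and $f_{2}=(f_{1}f_{2})\cdot(1/f_{1})$ is carried out identically in the paper's proof of Proposition~1.4.
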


\qquad By using the properties $T(r,f)=T(r,\frac{1}{f})+O(1)$ and $%
T(r,af)=T(r,f)+O(1)$, $a\in
\mathbb{C}
\setminus \{0\}$, one can obtain the following result.

\begin{proposition}
\label{p1.3} (\cite{b8}) Let $f$ be a nonconstant meromorphic function. Then%
\newline
(i) $\rho _{(\alpha ,\beta ,\gamma )}[\frac{1}{f}]=\rho _{(\alpha ,\beta
,\gamma )}[f]$ $\left( f\not\equiv 0\right) ;$\newline
(ii) $\rho _{(\alpha (\log ),\beta ,\gamma )}[\frac{1}{f}]=\rho _{(\alpha
(\log ),\beta ,\gamma )}[f]$ $\left( f\not\equiv 0\right) ;$\newline
(iii) If $a\in
\mathbb{C}
\setminus \{0\}$, then $\rho _{(\alpha ,\beta ,\gamma )}[af]=\rho _{(\alpha
,\beta ,\gamma )}[f]$ and $\tau _{(\alpha ,\beta ,\gamma )}[af]=\tau
_{(\alpha ,\beta ,\gamma )}[f]$ if $0<\rho _{(\alpha ,\beta ,\gamma
)}[f]<+\infty ;$\newline
(iii) If $a\in
\mathbb{C}
\setminus \{0\}$, then $\rho _{(\alpha (\log ),\beta ,\gamma )}[af]=\rho
_{(\alpha (\log ),\beta ,\gamma )}[f]$ and $\tau _{(\alpha (\log ),\beta
,\gamma )}[af]=\tau _{(\alpha (\log ),\beta ,\gamma )}[f]$ if $0<\rho
_{(\alpha \left( \log \right) ,\beta ,\gamma )}[f]<+\infty $.
\end{proposition}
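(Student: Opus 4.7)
I would follow the direct approach suggested in the preamble to the proposition, applying the Nevanlinna identities $T(r, 1/f) = T(r, f) + O(1)$ and $T(r, af) = T(r, f) + O(1)$ for $a \in \mathbb{C} \setminus \{0\}$, together with the $L_1$ subadditivity of $\alpha$.

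Let $g$ denote either $1/f$ or $af$. Since $f$ is nonconstant meromorphic, $T(r, f) \to +\infty$ as $r \to \infty$, so the identity $T(r, g) = T(r, f) + O(1)$ forces
\[ \log T(r, g) = \log T(r, f) + o(1), \quad r \to +\infty. \]
For all sufficiently large $r$ this yields $\log T(r, g) \leq \log T(r, f) + 1$ and the symmetric inequality, so monotonicity of $\alpha$ and the $L_1$ property $\alpha(a+b) \leq \alpha(a) + \alpha(b) + c$ give
\[ \alpha(\log T(r, g)) \leq \alpha(\log T(r, f) + 1) \leq \alpha(\log T(r, f)) + \alpha(1) + c, \]
along with the reverse estimate. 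Hence $|\alpha(\log T(r, g)) - \alpha(\log T(r, f))| = O(1)$; dividing by $\beta(\log \gamma(r))$, which tends to $+\infty$, and taking limsup as $r \to +\infty$ yields $\rho_{(\alpha,\beta,\gamma)}[g] = \rho_{(\alpha,\beta,\gamma)}[f]$. This proves (i) and the order part of the first ``(iii)''.

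For (ii) and the order part of the second ``(iii)'', I would apply the same scheme one logarithm higher: since $\log T(r, g) = \log T(r, f) + o(1)$ and $\log T(r, f) \to +\infty$, we have $\log^{[2]} T(r, g) = \log^{[2]} T(r, f) + o(1)$, and the $L_1$ estimate on $\alpha$ again gives $\alpha(\log^{[2]} T(r, g)) = \alpha(\log^{[2]} T(r, f)) + O(1)$, from which the conclusion follows after division by $\beta(\log \gamma(r))$.

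The main obstacle is the type equality $\tau_{(\alpha,\beta,\gamma)}[af] = \tau_{(\alpha,\beta,\gamma)}[f]$ in (iii) and its $(\alpha(\log))$-analogue. Since the orders agree, the denominators in the defining limsups coincide, and the question reduces to showing
\[ \exp\bigl(\alpha(\log T(r, af)) - \alpha(\log T(r, f))\bigr) \to 1. \]
The $O(1)$ bound above only yields a bounded ratio, not one tending to $1$. To close this gap, I would refine the estimate by exploiting that the shift $\epsilon(r) := \log T(r, af) - \log T(r, f) = O(1/T(r, f))$ actually tends to $0$, then use continuity of $\alpha$ together with the regularity assumption $\alpha(\log x) = o(\alpha(x))$ to absorb the additive constant from the $L_1$ inequality and obtain $o(1)$ in the exponent. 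This is the only step in the argument that requires care beyond routine verification.
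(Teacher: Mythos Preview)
Your approach is exactly what the paper indicates: the proposition is not proved in the paper but cited from \cite{b8}, with only the one-line hint ``By using the properties $T(r,f)=T(r,1/f)+O(1)$ and $T(r,af)=T(r,f)+O(1)$ \ldots\ one can obtain the following result.'' Your argument expands precisely this hint, and your flagged subtlety about the type equality (where the $O(1)$ control from $\alpha\in L_1$ gives only a bounded exponential ratio, not one tending to $1$) is a legitimate point that the paper simply does not address here.
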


\begin{proposition}
\label{p1.4} Let $f,$ $g$ be nonconstant meromorphic functions with $\rho
_{(\alpha \left( \log \right) ,\beta ,\gamma )}[f]$ as $(\alpha \left( \log
\right) ,\beta ,\gamma )$-order and $\mu _{(\alpha \left( \log \right)
,\beta ,\gamma )}[g]$ as lower $(\alpha \left( \log \right) ,\beta ,\gamma )$%
-order. Then%
\begin{equation*}
\mu _{(\alpha \left( \log \right) ,\beta ,\gamma )}\left( f+g\right) \leq
\max \left\{ \rho _{(\alpha \left( \log \right) ,\beta ,\gamma )}\left(
f\right) ,\mu _{(\alpha \left( \log \right) ,\beta ,\gamma )}\left( g\right)
\right\}
\end{equation*}%
\textit{and}%
\begin{equation*}
\mu _{(\alpha \left( \log \right) ,\beta ,\gamma )}\left( fg\right) \leq
\max \left\{ \rho _{(\alpha \left( \log \right) ,\beta ,\gamma )}\left(
f\right) ,\mu _{(\alpha \left( \log \right) ,\beta ,\gamma )}\left( g\right)
\right\} .
\end{equation*}%
\textit{Furthermore}, \textit{if }$\mu _{(\alpha \left( \log \right) ,\beta
,\gamma )}\left( g\right) >\rho _{(\alpha \left( \log \right) ,\beta ,\gamma
)}\left( f\right) ,$ \textit{then we obtain}%
\begin{equation*}
\mu _{(\alpha \left( \log \right) ,\beta ,\gamma )}\left( f+g\right) =\mu
_{(\alpha \left( \log \right) ,\beta ,\gamma )}\left( fg\right) =\mu
_{(\alpha \left( \log \right) ,\beta ,\gamma )}\left( g\right) .
\end{equation*}
\end{proposition}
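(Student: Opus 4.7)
The plan is to first prove the two inequalities by estimating the Nevanlinna characteristic of the sum and the product from above, and then deduce the equality case by writing $g$ as a combination of $f+g$ and $f$ (respectively $fg$ and $1/f$) and applying the inequalities in reverse.

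For the inequalities, I would begin with the standard estimates $T(r,f+g)\leq T(r,f)+T(r,g)+\log 2$ and $T(r,fg)\leq T(r,f)+T(r,g)$. Since $f,g$ are nonconstant, for all $r$ sufficiently large both $T(r,f)$ and $T(r,g)$ exceed $1$, so after taking logarithms twice one gets
\begin{equation*}
\log^{[2]}T(r,f+g)\leq \max\{\log^{[2]}T(r,f),\log^{[2]}T(r,g)\}+O(1),
\end{equation*}
and an analogous estimate for $fg$. Applying the nondecreasing function $\alpha$ and invoking the $L_{1}$ property $\alpha(a+b)\leq \alpha(a)+\alpha(b)+c$ (used to absorb the constant $O(1)$ additive term, after first replacing it by $R_{0}$ via monotonicity if necessary) yields
\begin{equation*}
\alpha(\log^{[2]}T(r,f+g))\leq \max\{\alpha(\log^{[2]}T(r,f)),\alpha(\log^{[2]}T(r,g))\}+O(1).
\end{equation*}
Now choose a sequence $r_{n}\to +\infty$ realising the liminf defining $\mu_{(\alpha(\log),\beta,\gamma)}(g)$. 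On this sequence, the first summand inside the max is bounded above by $(\rho_{(\alpha(\log),\beta,\gamma)}(f)+\varepsilon)\beta(\log\gamma(r_{n}))$ for any $\varepsilon>0$ eventually, and the second approaches $\mu_{(\alpha(\log),\beta,\gamma)}(g)\,\beta(\log\gamma(r_{n}))$. Dividing by $\beta(\log\gamma(r_{n}))\to +\infty$, passing to $\liminf$, and letting $\varepsilon\downarrow 0$ gives the claimed upper bound, and the product case is handled identically.

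For the \emph{furthermore} part, write $g=(f+g)+(-f)$ and apply the inequality already proved, with $f+g$ playing the role of the $\mu$-function and $-f$ the role of the $\rho$-function:
\begin{equation*}
\mu_{(\alpha(\log),\beta,\gamma)}(g)\leq \max\{\rho_{(\alpha(\log),\beta,\gamma)}(-f),\mu_{(\alpha(\log),\beta,\gamma)}(f+g)\},
\end{equation*}
and invoke Proposition~\ref{p1.3} to replace $\rho(-f)$ by $\rho(f)$. Under the hypothesis $\mu(g)>\rho(f)$, this forces $\mu(g)\leq \mu(f+g)$, and combined with the reverse inequality (whose right hand side becomes $\mu(g)$) gives equality. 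The multiplicative version follows by writing $g=(fg)\cdot(1/f)$ and using Proposition~\ref{p1.3}(ii) to identify $\rho(1/f)=\rho(f)$.

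The only nontrivial point is Step~3, namely the careful handling of the additive constants when composing with $\alpha$: the class $L_{1}$ only delivers subadditivity up to a constant for arguments $\geq R_{0}$, so one must first bound the additive slack by $R_{0}$ using monotonicity before invoking $L_{1}$. Once this is done, everything reduces to the familiar limsup/liminf manipulation used repeatedly for growth indicators.
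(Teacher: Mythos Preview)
Your proposal is correct and follows essentially the same route as the paper: pick a sequence $r_n\to\infty$ realising $\mu_{(\alpha(\log),\beta,\gamma)}(g)$, combine the sub-additivity estimates $T(r,f+g)\le T(r,f)+T(r,g)+\log 2$ and $T(r,fg)\le T(r,f)+T(r,g)$ with the $\rho$-bound on $T(r_n,f)$ and the $\mu$-bound on $T(r_n,g)$, and then deduce the equality case from $g=(f+g)-f$ and $g=(fg)\cdot(1/f)$ together with $\rho(-f)=\rho(1/f)=\rho(f)$. The only cosmetic difference is that the paper first rewrites the individual bounds as $T(r_n,\cdot)\le\exp^{[2]}\{\alpha^{-1}((\cdot+\varepsilon)\beta(\log\gamma(r_n)))\}$ and adds at the level of $T$, whereas you stay on the $\alpha(\log^{[2]}T)$ side and invoke the $L_1$ property of $\alpha$ to absorb the additive $O(1)$; these are two equivalent ways of handling the same constant, and the paper's ``we easily obtain'' hides exactly the step you spell out.
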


\begin{proof}
Without loss of generality, we assume that $\rho _{(\alpha \left( \log
\right) ,\beta ,\gamma )}\left( f\right) <+\infty $ and $\mu _{(\alpha
\left( \log \right) ,\beta ,\gamma )}\left( g\right) <+\infty .$ From the
definition of the lower $(\alpha \left( \log \right) ,\beta ,\gamma )$%
-order, there exists a sequence $r_{n}\longrightarrow +\infty $ $\left(
n\longrightarrow +\infty \right) $ such that
\begin{equation*}
\underset{n\longrightarrow +\infty }{\lim }\frac{\alpha \left( \log
^{[2]}T\left( r_{n},g\right) \right) }{\beta \left( \log \gamma \left(
r_{n}\right) \right) }=\mu _{(\alpha \left( \log \right) ,\beta ,\gamma
)}\left( g\right) .
\end{equation*}%
Then, for any given $\varepsilon >0,$ there exists a positive integer $N_{1}$
such that%
\begin{equation*}
T(r_{n},g)\leq \exp ^{[2]}\left\{ \alpha ^{-1}\left( \left( \mu _{(\alpha
\left( \log \right) ,\beta ,\gamma )}\left( g\right) +\varepsilon \right)
\beta \left( \log \gamma \left( r_{n}\right) \right) \right) \right\}
\end{equation*}%
holds for $n>N_{1}.$ From the definition of the $(\alpha \left( \log \right)
,\beta ,\gamma )-$order, for any given $\varepsilon >0,$ there exists a
positive number $R$ such that
\begin{equation*}
T(r,f)\leq \exp ^{[2]}\left\{ \alpha ^{-1}\left( \left( \rho _{(\alpha
\left( \log \right) ,\beta ,\gamma )}\left( f\right) +\varepsilon \right)
\beta \left( \log \gamma \left( r\right) \right) \right) \right\}
\end{equation*}%
holds for $r\geq R.$ Since $r_{n}\longrightarrow +\infty $ $\left(
n\longrightarrow +\infty \right) ,$ there exists a positive integer $N_{2}$
such that $r_{n}>R,$ and thus
\begin{equation*}
T(r_{n},f)\leq \exp ^{[2]}\left\{ \alpha ^{-1}\left( \left( \rho _{(\alpha
\left( \log \right) ,\beta ,\gamma )}\left( f\right) +\varepsilon \right)
\beta \left( \log \gamma \left( r_{n}\right) \right) \right) \right\}
\end{equation*}%
holds for $n>N_{2}.$ Note that%
\begin{equation*}
T\left( r,f+g\right) \leq T\left( r,f\right) +T\left( r,g\right) +\ln 2
\end{equation*}%
and
\begin{equation*}
T\left( r,fg\right) \leq T\left( r,f\right) +T\left( r,g\right) .
\end{equation*}%
Then, for any given $\varepsilon >0,$ we have for $n>\max \left\{
N_{1},N_{2}\right\} $%
\begin{equation*}
T\left( r_{n},f+g\right) \leq T\left( r_{n},f\right) +T\left( r_{n},g\right)
+\ln 2
\end{equation*}%
\begin{equation*}
\leq \exp ^{[2]}\left\{ \alpha ^{-1}\left( \left( \rho _{(\alpha \left( \log
\right) ,\beta ,\gamma )}\left( f\right) +\varepsilon \right) \beta \left(
\log \gamma \left( r_{n}\right) \right) \right) \right\}
\end{equation*}%
\begin{equation*}
+\exp ^{[2]}\left\{ \alpha ^{-1}\left( \left( \mu _{(\alpha \left( \log
\right) ,\beta ,\gamma )}\left( g\right) +\varepsilon \right) \beta \left(
\log \gamma \left( r_{n}\right) \right) \right) \right\} +\ln 2
\end{equation*}%
\begin{equation}
\leq 3\exp ^{[2]}\left\{ \alpha ^{-1}\left( \left( \max \left\{ \rho
_{(\alpha \left( \log \right) ,\beta ,\gamma )}\left( f\right) ,\mu
_{(\alpha \left( \log \right) ,\beta ,\gamma )}\left( g\right) \right\}
+\varepsilon \right) \beta \left( \log \gamma \left( r_{n}\right) \right)
\right) \right\}  \label{1.3}
\end{equation}%
and%
\begin{equation*}
T\left( r_{n},fg\right) \leq T\left( r_{n},f\right) +T\left( r_{n},g\right)
\end{equation*}%
\begin{equation}
\leq 2\exp ^{[2]}\left\{ \alpha ^{-1}\left( \left( \max \left\{ \rho
_{(\alpha \left( \log \right) ,\beta ,\gamma )}\left( f\right) ,\mu
_{(\alpha \left( \log \right) ,\beta ,\gamma )}\left( g\right) \right\}
+\varepsilon \right) \beta \left( \log \gamma \left( r_{n}\right) \right)
\right) \right\} .  \label{1.4}
\end{equation}%
Since $\varepsilon >0$ is arbitrary, then from (\ref{1.3}) and (\ref{1.4})$,$
we easily obtain%
\begin{equation}
\mu _{(\alpha \left( \log \right) ,\beta ,\gamma )}\left( f+g\right) \leq
\max \left\{ \rho _{(\alpha \left( \log \right) ,\beta ,\gamma )}\left(
f\right) ,\mu _{(\alpha \left( \log \right) ,\beta ,\gamma )}\left( g\right)
\right\}  \label{1.5}
\end{equation}%
and%
\begin{equation}
\mu _{(\alpha \left( \log \right) ,\beta ,\gamma )}\left( fg\right) \leq
\max \left\{ \rho _{(\alpha \left( \log \right) ,\beta ,\gamma )}\left(
f\right) ,\mu _{(\alpha \left( \log \right) ,\beta ,\gamma )}\left( g\right)
\right\} .  \label{1.6}
\end{equation}%
Suppose now that $\mu _{(\alpha \left( \log \right) ,\beta ,\gamma )}\left(
g\right) >\rho _{(\alpha \left( \log \right) ,\beta ,\gamma )}\left(
f\right) .$ Considering that%
\begin{equation}
T\left( r,g\right) =T\left( r,f+g-f\right) \leq T\left( r,f+g\right)
+T\left( r,f\right) +\ln 2  \label{1.7}
\end{equation}%
and%
\begin{equation*}
T\left( r,g\right) =T\left( r,\frac{fg}{f}\right) \leq T\left( r,fg\right)
+T\left( r,\frac{1}{f}\right)
\end{equation*}%
\begin{equation}
=T\left( r,fg\right) +T\left( r,f\right) +O\left( 1\right) .  \label{1.8}
\end{equation}%
By (\ref{1.7}), (\ref{1.8}) and the same method as above we obtain that%
\begin{equation*}
\mu _{(\alpha \left( \log \right) ,\beta ,\gamma )}\left( g\right) \leq \max
\left\{ \mu _{(\alpha \left( \log \right) ,\beta ,\gamma )}\left( f+g\right)
,\rho _{(\alpha \left( \log \right) ,\beta ,\gamma )}\left( f\right) \right\}
\end{equation*}%
\begin{equation}
=\mu _{(\alpha \left( \log \right) ,\beta ,\gamma )}\left( f+g\right)
\label{1.9}
\end{equation}%
and%
\begin{equation}
\mu _{(\alpha \left( \log \right) ,\beta ,\gamma )}\left( g\right) \leq \max
\left\{ \mu _{(\alpha \left( \log \right) ,\beta ,\gamma )}\left( fg\right)
,\rho _{(\alpha \left( \log \right) ,\beta ,\gamma )}\left( f\right)
\right\} =\mu _{(\alpha \left( \log \right) ,\beta ,\gamma )}\left(
fg\right) .  \label{1.10}
\end{equation}%
By using (\ref{1.5}) and (\ref{1.9}) we obtain $\mu _{(\alpha \left( \log
\right) ,\beta ,\gamma )}\left( f+g\right) =\mu _{(\alpha \left( \log
\right) ,\beta ,\gamma )}\left( g\right) $ and by (\ref{1.6}) and (\ref{1.10}%
)$,$ we get $\mu _{(\alpha \left( \log \right) ,\beta ,\gamma )}\left(
fg\right) =\mu _{(\alpha \left( \log \right) ,\beta ,\gamma )}\left(
g\right) .$
\end{proof}

\section{\textbf{Main Results}}

\qquad Very recently the author and Biswas have investigated the growth of
solutions of\ equation $\left( \ref{1.1}\right) $ and established the
following two results.

\begin{theorem}
\label{t2.1} (\cite{b8}) Let $A_{0}(z),$ $A_{1}(z),...,A_{k-1}(z)$ be entire
functions such that $\rho _{(\alpha ,\beta ,\gamma )}[A_{0}]>\max \{\rho
_{(\alpha ,\beta ,\gamma )}[A_{j}],$ $j=1,...,k-1\}$. Then every solution $%
f(z)\not\equiv 0$ of $\left( \ref{1.1}\right) $ satisfies $\rho _{(\alpha
(\log ),\beta ,\gamma )}[f]=\rho _{(\alpha ,\beta ,\gamma )}[A_{0}]$.
\end{theorem}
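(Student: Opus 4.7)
Set $\rho_0 = \rho_{(\alpha,\beta,\gamma)}[A_0]$ and $b = \max_{1 \leq j \leq k-1} \rho_{(\alpha,\beta,\gamma)}[A_j]$, so by hypothesis $b < \rho_0$. The plan is to prove the two inequalities $\rho_{(\alpha(\log),\beta,\gamma)}[f] \leq \rho_0$ and $\rho_{(\alpha(\log),\beta,\gamma)}[f] \geq \rho_0$ separately, freely passing between $T(r,f)$ and $M(r,f)$ via Proposition \ref{p1.1}.

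For the upper bound I would appeal to Wiman--Valiron theory. For each large $r$ outside a set of finite logarithmic measure, pick $z_r$ with $|z_r|=r$ at which $|f(z_r)|=M(r,f)$ and at which the Wiman--Valiron asymptotic $f^{(j)}(z_r)/f(z_r) = (\nu(r,f)/z_r)^j(1+o(1))$ holds, where $\nu(r,f)$ denotes the central index. Dividing \eqref{1.1} by $f(z_r)$ yields a polynomial inequality in $\nu(r,f)/r$ whose leading term forces
\begin{equation*}
\log \nu(r,f) \leq \max_{0 \leq j \leq k-1} \log|A_j(z_r)| + O(\log r).
\end{equation*}
Since every $A_j$ has $(\alpha,\beta,\gamma)$-order at most $\rho_0$, the right-hand side is bounded by $\exp\{\alpha^{-1}((\rho_0+\varepsilon)\beta(\log\gamma(r)))\} + O(\log r)$. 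Combining this with $\log M(r,f) \sim \nu(r,f)\log r$, taking two further logarithms, applying $\alpha$, and using $\alpha \in L_1$ together with the standing hypothesis $\alpha(\log^{[2]} r) = o(\beta(\log\gamma(r)))$ delivers $\rho_{(\alpha(\log),\beta,\gamma)}[f] \leq \rho_0$.

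For the lower bound I would rewrite \eqref{1.1} as
\begin{equation*}
|A_0(z)| \leq \left|\frac{f^{(k)}(z)}{f(z)}\right| + \sum_{j=1}^{k-1} |A_j(z)|\left|\frac{f^{(j)}(z)}{f(z)}\right|,
\end{equation*}
and invoke Gundersen's logarithmic derivative estimate $|f^{(j)}(z)/f(z)| \leq [T(2r,f)]^{2k}$, valid for $|z|=r$ outside a set $E \subset [0,+\infty)$ of finite logarithmic measure. For $j \geq 1$ we have $|A_j(z)| \leq \exp\{\alpha^{-1}((b+\varepsilon)\beta(\log\gamma(r)))\}$. Choosing a sequence $r_n \to +\infty$ with $r_n \notin E$ that realizes $\rho_{(\alpha,\beta,\gamma)}[A_0]$, and picking $z_n$ with $|z_n|=r_n$ and $|A_0(z_n)|$ close to $M(r_n,A_0)$, one obtains
\begin{equation*}
\exp\{\alpha^{-1}((\rho_0-\varepsilon)\beta(\log\gamma(r_n)))\} \leq C\exp\{\alpha^{-1}((b+\varepsilon)\beta(\log\gamma(r_n)))\}\,[T(2r_n,f)]^{2k}.
\end{equation*}
Since $b<\rho_0$, for $\varepsilon$ small the $A_j$-factor is asymptotically negligible against the left side, so $\log^{[2]} T(2r_n,f) \geq \alpha^{-1}((\rho_0-2\varepsilon)\beta(\log\gamma(r_n)))(1-o(1))$. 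Applying $\alpha$, using $\gamma \in L_3$ to absorb the factor $2$ in $2r_n$, and letting $\varepsilon \to 0$ gives $\rho_{(\alpha(\log),\beta,\gamma)}[f] \geq \rho_0$.

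The main obstacle is coordinating the two exceptional sets with the growth-realizing sequences: the Wiman--Valiron set in the upper bound and Gundersen's set $E$ in the lower bound both have finite logarithmic measure, so one must argue that the sequences $r_n$ witnessing the relevant limits can be selected outside them. This, together with the need to absorb additive error terms of the form $O(\log r)$ and $O(\log^{[2]} r)$ as well as multiplicative constants, rests essentially on the standing axioms $\alpha \in L_1$, $\gamma \in L_3$, $\alpha(\log x) = o(\alpha(x))$ and $\alpha^{-1}(kx) = o(\alpha^{-1}(x))$ for $0<k<1$. Verifying rigorously that these structural hypotheses suffice to push all such corrections below the $(\alpha,\beta,\gamma)$-threshold is the technical heart of the argument.
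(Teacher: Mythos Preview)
The paper does not contain its own proof of Theorem~\ref{t2.1}: the result is quoted from \cite{b8} and used as a black box in the proof of Theorem~\ref{t2.3}. So there is no proof in this manuscript to compare against directly.

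That said, your plan matches exactly the machinery the paper deploys for the analogous lower-order statements. The upper bound $\rho_{(\alpha(\log),\beta,\gamma)}[f]\le\rho_0$ is precisely Lemma~\ref{l3.7} (also cited from \cite{b8}), and the Wiman--Valiron argument you sketch is the one used in the second half of the proof of Theorem~\ref{t2.3} (via Lemmas~\ref{l3.4} and~\ref{l3.5}) to obtain $\mu_{(\alpha(\log),\beta,\gamma)}[f]\le\mu_{(\alpha,\beta,\gamma)}[A_0]$. Your lower bound via Gundersen's estimate (Lemma~\ref{l3.1}) and the pointwise inequality \eqref{4.1} is likewise the template of the first half of the proof of Theorem~\ref{t2.3}.

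The one point worth tightening is the coordination issue you flag at the end. In the paper's arguments this is handled not by picking a single sequence $r_n$ realizing the $\limsup$ and hoping it avoids Gundersen's set $E_1$, but by first manufacturing a set of \emph{infinite} logarithmic measure on which the relevant growth bound holds (Lemmas~\ref{l3.2}, \ref{l3.3}, \ref{l3.8}), and then intersecting with the complement of the finite-log-measure exceptional set. For your lower bound you would want the analogue of Lemma~\ref{l3.8} for order rather than type: a set $E\subset(1,+\infty)$ of infinite logarithmic measure on which $\log\log M(r,A_0)>\alpha^{-1}((\rho_0-\varepsilon)\beta(\log\gamma(r)))$. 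This follows from monotonicity of $M(r,A_0)$ together with $\gamma\in L_3$ and $\beta\in L_2$, by thickening each point of a realizing sequence to an interval $[r_n,2r_n]$; the paper carries out exactly this construction in the proof of Lemma~\ref{l3.2}. With that lemma in hand your argument goes through cleanly.
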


\begin{theorem}
\label{t2.2} (\cite{b8}) Let $A_{0}(z),$ $A_{1}(z),...,A_{k-1}(z)$ be entire
functions. Assume that%
\begin{equation*}
\max \{\rho _{(\alpha ,\beta ,\gamma )}[A_{j}],j=1,...,k-1\}\leq \rho
_{(\alpha ,\beta ,\gamma )}[A_{0}]=\rho _{0}<+\infty
\end{equation*}%
and%
\begin{equation*}
\max \{\tau _{(\alpha ,\beta ,\gamma ),M}[A_{j}]:\rho _{(\alpha ,\beta
,\gamma )}[A_{j}]=\rho _{(\alpha ,\beta ,\gamma )}[A_{0}]>0\}<\tau _{(\alpha
,\beta ,\gamma ),M}[A_{0}]=\tau _{M}\text{.}
\end{equation*}%
Then every solution $f(z)\not\equiv 0$ of $\left( \ref{1.1}\right) $
satisfies $\rho _{(\alpha (\log ),\beta ,\gamma )}[f]=\rho _{(\alpha ,\beta
,\gamma )}[A_{0}].$
\end{theorem}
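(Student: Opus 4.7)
The plan is to prove $\rho_{(\alpha(\log),\beta,\gamma)}[f]=\rho_0$ by establishing the two opposite inequalities. The upper bound $\rho_{(\alpha(\log),\beta,\gamma)}[f]\le\rho_0$ is the standard growth estimate for solutions of linear ODEs: when every coefficient has $(\alpha,\beta,\gamma)$-order at most $\rho_0$, every nonzero solution satisfies $\rho_{(\alpha(\log),\beta,\gamma)}[f]\le\rho_0$ (this is implicit in the proof of Theorem~\ref{t2.1}, where only this direction actually uses the orders of the $A_j$ and not the strict dominance of $A_0$). All the substance of the proof lies in the reverse inequality, which I would obtain by contradiction.

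Assume $\rho_f:=\rho_{(\alpha(\log),\beta,\gamma)}[f]<\rho_0$, and rewrite \eqref{1.1} as
\[
|A_0(z)|\le\bigg|\frac{f^{(k)}(z)}{f(z)}\bigg|+\sum_{j=1}^{k-1}|A_j(z)|\,\bigg|\frac{f^{(j)}(z)}{f(z)}\bigg|.
\]
Fix $\eta>0$ so small that $\tau_{*}+\eta<\tau_M-\eta$, where $\tau_{*}:=\max\{\tau_{(\alpha,\beta,\gamma),M}[A_j]:\rho_{(\alpha,\beta,\gamma)}[A_j]=\rho_0\}$, and choose $\varepsilon,\delta>0$ with $\rho_f+\varepsilon<\rho_0-\delta$ such that every $A_j$ of strictly smaller order obeys $\rho_{(\alpha,\beta,\gamma)}[A_j]\le\rho_0-\delta$. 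The definitions of type and order then yield, for all large $r$,
\[
M(r,A_j)\le\exp^{[2]}\!\bigl\{\alpha^{-1}(\log(\tau_{*}+\eta)+\rho_0\beta(\log\gamma(r)))\bigr\}\quad(\rho_j=\rho_0),
\]
\[
M(r,A_j)\le\exp^{[2]}\!\bigl\{\alpha^{-1}((\rho_0-\delta)\beta(\log\gamma(r)))\bigr\}\quad(\rho_j<\rho_0),
\]
together with a sequence $r_n\to+\infty$ along which
\[
M(r_n,A_0)>\exp^{[2]}\!\bigl\{\alpha^{-1}(\log(\tau_M-\eta)+\rho_0\beta(\log\gamma(r_n)))\bigr\}.
\]
A Gundersen-type logarithmic-derivative bound, combined with Proposition~\ref{p1.1}, yields for $|z|=r$ outside an exceptional set $E$ of finite linear measure
\[
\bigg|\frac{f^{(j)}(z)}{f(z)}\bigg|\le\exp\bigl\{\alpha^{-1}((\rho_f+\varepsilon)\beta(\log\gamma(r)))\bigr\},\quad 1\le j\le k.
\]
I would then select $z_n$ on $|z_n|=r_n\notin E$ with $|A_0(z_n)|=M(r_n,A_0)$ and insert all these bounds into the above inequality for $|A_0(z_n)|$.

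Taking $\log^{[2]}$ and applying $\alpha$, the contributions from coefficients of strictly smaller order and from the logarithmic-derivative factors are to be absorbed by the dominant double-exponential using $\rho_f+\varepsilon<\rho_0-\delta$ together with the asymptotic property $\alpha^{-1}(kx)=o(\alpha^{-1}(x))$ for $0<k<1$; after a further application of $\alpha$ and exploitation of its subadditivity ($\alpha\in L_1$), the estimate reduces asymptotically to
\[
\log(\tau_M-\eta)+\rho_0\beta(\log\gamma(r_n))\le\log(\tau_{*}+\eta)+\rho_0\beta(\log\gamma(r_n))+o(1),
\]
which contradicts the choice $\tau_{*}+\eta<\tau_M-\eta$. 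The main technical obstacle is precisely this last absorption step: passing the error terms through $\alpha^{-1}$ and $\alpha$ so that the fixed positive type gap $\log((\tau_M-\eta)/(\tau_{*}+\eta))$ survives while the lower-order and log-derivative contributions become negligible. This is where the regularity hypotheses (i)--(ii) on $(\alpha,\beta,\gamma)$ and the subadditivity of $\alpha$ enter decisively.
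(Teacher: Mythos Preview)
The paper does not contain its own proof of Theorem~\ref{t2.2}; the result is quoted from \cite{b8}. However, the proof of the lower-order analogue Theorem~\ref{t2.4} in this paper follows exactly the template that \cite{b8} uses, so one can compare your outline against that. Your global strategy is the right one: the upper bound is Lemma~\ref{l3.7}, and the lower bound is obtained by contradiction from the rewritten equation $|A_0|\le |f^{(k)}/f|+\sum_{j\ge1}|A_j|\,|f^{(j)}/f|$ together with the type gap, exactly as in (4.14)--(4.18).

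Two technical points in your sketch do not match the available tools and would need correction. First, the pointwise bound you state,
\[
\Bigl|\frac{f^{(j)}(z)}{f(z)}\Bigr|\le\exp\bigl\{\alpha^{-1}((\rho_f+\varepsilon)\beta(\log\gamma(r)))\bigr\},
\]
is too strong: Lemma~\ref{l3.6} gives this size only for $m(r,f^{(j)}/f)$, not pointwise. The pointwise tool is Gundersen's Lemma~\ref{l3.1}, which yields $|f^{(j)}/f|\le B[T(2r,f)]^{k+1}$ off a set $E_1$ of finite logarithmic measure; combined with $\rho_{(\alpha(\log),\beta,\gamma)}[f]=\rho_f$ this is of size $\bigl[\exp^{[2]}\{\alpha^{-1}((\rho_f+\varepsilon)\beta(\log\gamma(r)))\}\bigr]^{k+1}$, a \emph{double} exponential. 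Fortunately this does not destroy the argument: since $\rho_f+\varepsilon<\rho_0$, the hypothesis $\alpha^{-1}(kx)=o(\alpha^{-1}(x))$ still lets this factor be absorbed into the $(\tau_*+\eta)$ term, precisely as in the passage from (4.17) to (4.18). Second, you extract only a \emph{sequence} $r_n$ from the type of $A_0$ and then assume $r_n\notin E$; nothing prevents the exceptional set from swallowing every $r_n$. The paper's device here is Lemma~\ref{l3.8}, which upgrades the type hypothesis on $A_0$ to a set of \emph{infinite logarithmic measure} on which $M(r,A_0)$ is large; such a set must meet the complement of $E_1$. With these two adjustments your argument coincides with the one underlying Theorem~\ref{t2.2} as reflected in the proof of Theorem~\ref{t2.4}.
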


\qquad Theorems \ref{t2.1} and \ref{t2.2} concerned the growth properties of
solutions of $\left( \ref{1.1}\right) $, when $A_{0}$ is dominating the
others coefficients by its $(\alpha ,\beta ,\gamma )$-order and $(\alpha
,\beta ,\gamma )$-type. Thus, the natural question which arises: If $A_{0}$
is dominating coefficient with its lower $(\alpha ,\beta ,\gamma )$-order
and lower $(\alpha ,\beta ,\gamma )$-type, what can we say about the growth
of solutions of $\left( \ref{1.1}\right) ?$ The following results give
answer to this question.

\begin{theorem}
\label{t2.3} \textit{Let }$A_{0}\left( z\right) ,...,$\textit{\ }$%
A_{k-1}\left( z\right) $\textit{\ be entire functions}$.$\textit{\ Assume
that }$\max \{\rho _{(\alpha ,\beta ,\gamma )}[A_{j}]:j=1,...,k-1\}<\mu
_{(\alpha ,\beta ,\gamma )}[A_{0}]\leq \rho _{(\alpha ,\beta ,\gamma
)}[A_{0}]<+\infty .$\textit{\ Then every solution }$f\not\equiv 0$\textit{\
of }$\left( \ref{1.1}\right) $\textit{\ satisfies}%
\begin{equation*}
\underline{\overline{\lambda }}_{(\alpha \left( \log \right) ,\beta ,\gamma
)}[f-g]=\mu _{(\alpha ,\beta ,\gamma )}[A_{0}]=\mu _{(\alpha (\log ),\beta
,\gamma )}[f]
\end{equation*}%
\textit{\ }
\begin{equation*}
\leq \rho _{(\alpha (\log ),\beta ,\gamma )}[f]=\rho _{(\alpha ,\beta
,\gamma )}[A_{0}]=\overline{\lambda }_{(\alpha (\log ),\beta ,\gamma )}[f-g],
\end{equation*}%
where $g\not\equiv 0$ is an entire function satisfying $\rho _{(\alpha (\log
),\beta ,\gamma )}[g]<\mu _{(\alpha ,\beta ,\gamma )}[A_{0}].$
\end{theorem}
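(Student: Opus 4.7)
The plan is to split the claim into four identities. First, $\rho_{(\alpha(\log),\beta,\gamma)}[f] = \rho_{(\alpha,\beta,\gamma)}[A_0]$ is Theorem~\ref{t2.1}, since the present hypothesis is strictly stronger. Second, $\mu_{(\alpha(\log),\beta,\gamma)}[f] = \mu_{(\alpha,\beta,\gamma)}[A_0]$ is the central new estimate. Third and fourth, the two zero-exponent identities for $f-g$ reduce to the standard oscillation technique applied to $w := f - g$, which satisfies a nonhomogeneous equation of order $k$. The intermediate inequality $\mu \leq \rho$ is automatic, as is the trivial direction ($\leq$) in the two $\lambda$-identities.

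For the inequality $\mu_{(\alpha(\log),\beta,\gamma)}[f] \geq \mu_{(\alpha,\beta,\gamma)}[A_0]$, I would solve (\ref{1.1}) for
\begin{equation*}
-A_0 = \frac{f^{(k)}}{f} + \sum_{j=1}^{k-1} A_j \frac{f^{(j)}}{f},
\end{equation*}
pass to proximity functions, and apply the logarithmic derivative lemma to obtain
\begin{equation*}
T(r,A_0) \leq \sum_{j=1}^{k-1} T(r,A_j) + O\bigl(\log T(r,f) + \log r\bigr)
\end{equation*}
outside a set $E$ of finite logarithmic measure. The gap $\max_j \rho_{(\alpha,\beta,\gamma)}[A_j] < \mu_{(\alpha,\beta,\gamma)}[A_0]$ absorbs the coefficient terms, leaving $T(r,A_0) \leq O(\log T(r,f) + \log r)$ for $r \notin E$ large. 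Two applications of $\log$, one application of $\alpha$, together with $\alpha(\log x) = o(\alpha(x))$, then give $\alpha(\log^{[2]} T(r,f)) \geq (1-o(1))\alpha(\log T(r,A_0))$ off $E$; dividing by $\beta(\log\gamma(r))$, taking $\liminf$, and interpolating across $E$ yields the claim. For the reverse direction, I would use Wiman--Valiron: at $z_n$ maximizing $|f|$ on $|z|=r_n$ (with $r_n$ realizing $\mu_{(\alpha,\beta,\gamma)}[A_0]$), the asymptotics $f^{(j)}(z_n)/f(z_n) \sim (\nu_f(r_n)/z_n)^j$ inserted into (\ref{1.1}) force $\nu_f(r_n) \leq C r_n M(r_n,A_0)(1+o(1))$; tracking this through $\log M(r,f) = O(\nu_f(r)\log r)$ and using $\alpha \in L_1$ gives $\alpha(\log^{[3]} M(r_n,f))/\beta(\log\gamma(r_n)) \leq \mu_{(\alpha,\beta,\gamma)}[A_0]+o(1)$, hence $\mu_{(\alpha(\log),\beta,\gamma)}[f] \leq \mu_{(\alpha,\beta,\gamma)}[A_0]$.

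For the $\lambda$-identities, set $w := f-g$. Proposition~\ref{p1.2} gives $\rho_{(\alpha(\log),\beta,\gamma)}[w] = \rho_{(\alpha(\log),\beta,\gamma)}[f]$, and Proposition~\ref{p1.4} combined with the previous step gives $\mu_{(\alpha(\log),\beta,\gamma)}[w] = \mu_{(\alpha(\log),\beta,\gamma)}[f]$, since $\rho_{(\alpha(\log),\beta,\gamma)}[g] < \mu_{(\alpha,\beta,\gamma)}[A_0] = \mu_{(\alpha(\log),\beta,\gamma)}[f]$. Substituting $f = w + g$ in (\ref{1.1}) yields $w^{(k)} + A_{k-1}w^{(k-1)} + \cdots + A_0 w = -F$ with $F := g^{(k)} + \sum_{j=0}^{k-1} A_j g^{(j)}$; note $F \not\equiv 0$, since otherwise $g$ would be a nonzero solution of (\ref{1.1}) and Theorem~\ref{t2.1} would force $\rho_{(\alpha(\log),\beta,\gamma)}[g] = \rho_{(\alpha,\beta,\gamma)}[A_0]$, contradicting the hypothesis on $g$. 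Solving for $1/w$ and repeating the proximity-function plus log-derivative bookkeeping gives
\begin{equation*}
N(r,1/w) \geq T(r,w) - T(r,F) - \sum_{j=0}^{k-1} T(r,A_j) - O\bigl(\log T(r,w)+\log r\bigr)
\end{equation*}
for $r \notin E$. Because each $A_j$ has $\rho_{(\alpha(\log),\beta,\gamma)}=0$ (finite $\rho_{(\alpha,\beta,\gamma)}$ together with $\alpha(\log x)=o(\alpha(x))$) and $F$ has $\rho_{(\alpha(\log),\beta,\gamma)}[F] \leq \rho_{(\alpha(\log),\beta,\gamma)}[g] < \mu_{(\alpha(\log),\beta,\gamma)}[w]$, the subtracted terms are negligible in the $(\alpha(\log),\beta,\gamma)$-scale. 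Passing to $\limsup$ and to $\liminf$ then delivers $\overline{\lambda}_{(\alpha(\log),\beta,\gamma)}[w] \geq \rho_{(\alpha(\log),\beta,\gamma)}[w]$ and $\underline{\overline{\lambda}}_{(\alpha(\log),\beta,\gamma)}[w] \geq \mu_{(\alpha(\log),\beta,\gamma)}[w]$ respectively; the trivial inequalities from $\overline{N}(r,1/w) \leq T(r,w)+O(1)$ give the matching reverse directions.

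The main technical obstacle is the interaction between the exceptional set $E$ and the $\liminf$ defining $\mu$ and $\underline{\overline{\lambda}}$: an inequality valid only for $r \notin E$ is not a priori inherited by $\liminf$ over all $r$. Circumventing this requires a density argument: for each $r$, pick a nearby $r' \in [r,2r] \setminus E$ (possible since $E$ has finite logarithmic measure), and transfer the estimate back to $r$ using the slow variation of $\beta(\log\gamma(\cdot))$ (from $\gamma \in L_3$ giving $\gamma(2r)\leq 2\gamma(r)$, and $\beta \in L_2$ giving $\beta(x+O(1))=(1+o(1))\beta(x)$) together with monotonicity of $T(r,\cdot)$. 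The regularity $\alpha^{-1}(kx) = o(\alpha^{-1}(x))$ for $0<k<1$ is similarly used to absorb multiplicative constants when converting between $M$-level and $T$-level bounds in the Wiman--Valiron step.
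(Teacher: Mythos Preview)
Your proposal is correct and follows the same four-part decomposition as the paper: invoke Theorem~\ref{t2.1} for the $\rho$-identity, prove the $\mu$-identity in two directions, then deduce the oscillation statements for $w=f-g$ from a nonhomogeneous equation. The Wiman--Valiron step for $\mu_{(\alpha(\log),\beta,\gamma)}[f]\le\mu_{(\alpha,\beta,\gamma)}[A_0]$ and the treatment of $w$ via Propositions~\ref{p1.2}, \ref{p1.4} and the standard $1/w$ bookkeeping match the paper (where the latter is packaged as Lemmas~\ref{l3.10} and~\ref{l3.11}).

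The one genuine methodological difference is in the inequality $\mu_{(\alpha(\log),\beta,\gamma)}[f]\ge\mu_{(\alpha,\beta,\gamma)}[A_0]$. You argue via the Nevanlinna characteristic: bound $m(r,A_0)$ by $\sum m(r,A_j)+O(\log(rT(r,f)))$ using the logarithmic-derivative lemma, absorb the $A_j$-terms, and then run a density argument to transfer the resulting estimate across the exceptional set and into the $\liminf$. The paper instead argues by contradiction with \emph{pointwise} Gundersen estimates (Lemma~\ref{l3.1}), and handles the $\liminf$/exceptional-set interaction by first constructing a set $E_2$ of \emph{infinite} logarithmic measure on which $T(r,f)$ is small (Lemma~\ref{l3.2}); since $E_2\setminus E_1$ still has infinite logarithmic measure, the contradiction is immediate without any density transfer. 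The same device (Lemma~\ref{l3.3}) is used in the $\mu\le$ direction to obtain a set of infinite logarithmic measure where $M(r,A_0)$ is small, which automatically avoids the Wiman--Valiron exceptional set $E_4$. Your version (``$r_n$ realizing $\mu_{(\alpha,\beta,\gamma)}[A_0]$'') needs the same care, since those $r_n$ might all fall into $E_4$; your density argument fixes this too, but the paper's infinite-log-measure lemmas package the bookkeeping once and reuse it. Both routes are valid; the paper's is slightly cleaner, yours is more self-contained.
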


\begin{theorem}
\label{t2.4} Let $A_{0}(z),$ $A_{1}(z),...,A_{k-1}(z)$ be entire functions.
Assume that\textit{\ }%
\begin{equation*}
\max \{\rho _{(\alpha ,\beta ,\gamma )}[A_{j}]:j=1,...,k-1\}\leq \mu
_{(\alpha ,\beta ,\gamma )}[A_{0}]
\end{equation*}%
\begin{equation*}
\leq \rho _{(\alpha ,\beta ,\gamma )}[A_{0}]=\rho <+\infty \text{ }\left(
0<\rho <+\infty \right)
\end{equation*}%
\textit{and}%
\begin{equation*}
\mathit{\ }\tau _{1}=\max \{\tau _{(\alpha ,\beta ,\gamma ),M}\left[ A_{j}%
\right] :\rho _{(\alpha ,\beta ,\gamma )}[A_{j}]=\mu _{(\alpha ,\beta
,\gamma )}[A_{0}]>0\}<\underline{\tau }_{(\alpha ,\beta ,\gamma ),M}[A_{0}]
\end{equation*}%
\begin{equation*}
=\tau \mathit{\ }\left( 0<\tau <+\infty \right) .
\end{equation*}%
\textit{Then every solution }$f\not\equiv 0$\textit{\ of }$\left( \ref{1.1}%
\right) $\textit{\ satisfies}%
\begin{equation*}
\underline{\overline{\lambda }}_{(\alpha \left( \log \right) ,\beta ,\gamma
)}[f-g]=\mu _{(\alpha ,\beta ,\gamma )}[A_{0}]=\mu _{(\alpha (\log ),\beta
,\gamma )}[f]
\end{equation*}%
\textit{\ }
\begin{equation*}
\leq \rho _{(\alpha (\log ),\beta ,\gamma )}[f]=\rho _{(\alpha ,\beta
,\gamma )}[A_{0}]=\overline{\lambda }_{(\alpha (\log ),\beta ,\gamma )}[f-g],
\end{equation*}%
where $g\not\equiv 0$ is an entire function satisfying $\rho _{(\alpha (\log
),\beta ,\gamma )}[g]<\mu _{(\alpha ,\beta ,\gamma )}[A_{0}].$
\end{theorem}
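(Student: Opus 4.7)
My plan is to split the argument into three stages: (A) establish the growth identities $\mu_{(\alpha(\log),\beta,\gamma)}[f]=\mu_{(\alpha,\beta,\gamma)}[A_0]$ and $\rho_{(\alpha(\log),\beta,\gamma)}[f]=\rho_{(\alpha,\beta,\gamma)}[A_0]$ for the solution $f$ itself; (B) push these identities to $h:=f-g$ using Propositions \ref{p1.2} and \ref{p1.4}; and (C) convert the growth of $h$ into the two zero-counting statements.

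For Stage A, the upper-order equality comes free from the existing theorems: if $\mu(A_0)<\rho(A_0)$ then $\max_j\rho_{(\alpha,\beta,\gamma)}[A_j]\le\mu(A_0)<\rho(A_0)$, so Theorem \ref{t2.1} applies, while if $\mu(A_0)=\rho(A_0)$ the assumed type gap reduces to the hypothesis of Theorem \ref{t2.2}. For the lower-order identity, I would start from the rewriting $-A_0=f^{(k)}/f+\sum_{j=1}^{k-1} A_j f^{(j)}/f$ and invoke a Gundersen-type pointwise bound on $f^{(j)}/f$ to get, off a set of $r$ of finite logarithmic measure,
\[
M(r,A_0)\le r^{ck}[T(\eta r,f)]^{k+1}\bigl(1+k\max_j M(r,A_j)\bigr).
\]
Choosing $\varepsilon>0$ with $\tau_1+2\varepsilon<\tau-\varepsilon$ and writing $\mu_0:=\mu_{(\alpha,\beta,\gamma)}[A_0]$, the definition of $\underline{\tau}_{(\alpha,\beta,\gamma),M}[A_0]=\tau$ gives, for all large $r$,
\[
M(r,A_0)\ge \exp\bigl\{\exp\bigl\{\alpha^{-1}\bigl(\log(\tau-\varepsilon)+\mu_0\beta(\log\gamma(r))\bigr)\bigr\}\bigr\},
\]
and the hypothesis on the upper types of the competing coefficients gives the matching upper bound with $\tau_1+2\varepsilon$ in place of $\tau-\varepsilon$. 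Combining these with $e^u-e^v\ge(u-v)e^v$ for $u>v>0$, and using $\beta\in L_2$ to absorb the passage from $\eta r$ to $r$, I expect to extract $\mu_{(\alpha(\log),\beta,\gamma)}[f]\ge\mu_0$. The matching inequality $\mu_{(\alpha(\log),\beta,\gamma)}[f]\le\mu_0$ should come from the Wiman-Valiron estimate $\nu_f(r_n)\le r_n(2M(r_n,A_0))^{1/k}$ on a sequence $r_n$ realising $\mu(A_0)$, transferred via the asymptotic relations $\log^{[2]}M(r,f)\sim\log\nu_f(r)$ and $\alpha(\log^{[2]}T(r,f))\sim\alpha(\log^{[3]}M(r,f))$.

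For Stage B, the hypothesis $\rho_{(\alpha(\log),\beta,\gamma)}[g]<\mu_0\le\mu_{(\alpha(\log),\beta,\gamma)}[f]$ lets me apply Proposition \ref{p1.2}(iii) to conclude $\rho_{(\alpha(\log),\beta,\gamma)}[h]=\rho_{(\alpha(\log),\beta,\gamma)}[f]$ and Proposition \ref{p1.4} (with $f$ and $-g$ swapped into the roles of $g$ and $f$) to conclude $\mu_{(\alpha(\log),\beta,\gamma)}[h]=\mu_{(\alpha(\log),\beta,\gamma)}[f]$. Moreover $h$ satisfies the nonhomogeneous equation $L[h]=L[f]-L[g]=-L[g]=:\varphi$, with $\varphi\not\equiv 0$; for otherwise $g$ would solve $(\ref{1.1})$ and Stage A would force $\mu_{(\alpha(\log),\beta,\gamma)}[g]=\mu_0$, contradicting $\rho_{(\alpha(\log),\beta,\gamma)}[g]<\mu_0$.

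For Stage C, I divide $L[h]=\varphi$ by $h$ and apply the logarithmic derivative lemma to get
\[
m(r,1/h)\le T(r,A_0)+(k-1)\max_j T(r,A_j)+T(r,\varphi)+O\bigl(\log rT(r,h)\bigr)
\]
off an exceptional set. Since $A_0$ and the $A_j$ have finite $(\alpha,\beta,\gamma)$-order and $\rho_{(\alpha(\log),\beta,\gamma)}[\varphi]<\mu_0\le\rho_{(\alpha(\log),\beta,\gamma)}[h]$, every $T$-term on the right is of strictly smaller $(\alpha(\log),\beta,\gamma)$-order than $T(r,h)$ and hence negligible on both the $\limsup$ and $\liminf$ scales, so $N(r,1/h)=T(r,h)+O(1)-m(r,1/h)$ inherits the full growth of $T(r,h)$. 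To pass from $N$ to $\overline{N}$, I use the observation that any zero of $h$ of order $n\ge k$ is a zero of $\varphi$ of order at least $n-k$, yielding $N(r,1/h)\le k\overline{N}(r,1/h)+N(r,1/\varphi)$; since $T(r,\varphi)$ is again negligible, $\overline{N}(r,1/h)$ inherits the growth of $T(r,h)$ as well, and together with the trivial upper bounds $\overline{\lambda}_{(\alpha(\log),\beta,\gamma)}[h]\le\rho_{(\alpha(\log),\beta,\gamma)}[h]$ and $\underline{\overline{\lambda}}_{(\alpha(\log),\beta,\gamma)}[h]\le\mu_{(\alpha(\log),\beta,\gamma)}[h]$ this yields all four asserted equalities. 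The main obstacle will be Stage A: the gap $\tau-\tau_1$ is merely an additive constant after applying $\alpha$, and amplifying it through the double exponentiation of the entire-function type definition, while absorbing the $O(\log rT(r,f))$ and subadditive corrections, will require careful use of the ambient regularity of $\alpha,\beta,\gamma$ (in particular $\alpha\in L_1$, $\beta\in L_2$, and $\alpha^{-1}(kx)=o(\alpha^{-1}(x))$ for $0<k<1$) at every step.
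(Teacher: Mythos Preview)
Your three-stage decomposition matches the paper's, and Stages B and C are essentially what the paper does: it packages your Stage C as Lemmas \ref{l3.10} and \ref{l3.11} (proved exactly via the logarithmic-derivative estimate and the multiplicity comparison $N(r,1/h)\le k\overline{N}(r,1/h)+N(r,1/\varphi)$ you describe), and it runs Stage B exactly through Propositions \ref{p1.2} and \ref{p1.4}, checking $G=-L[g]\not\equiv 0$ just as you do. The upper-order half of Stage A also agrees; your case split on whether $\mu_{(\alpha,\beta,\gamma)}[A_0]<\rho_{(\alpha,\beta,\gamma)}[A_0]$ is in fact a slightly more careful way of invoking Theorems \ref{t2.1}/\ref{t2.2} than the paper's bare citation of Theorem \ref{t2.2}. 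The divergence---and the place your sketch is exposed---is the inequality $\mu_{(\alpha(\log),\beta,\gamma)}[f]\ge\mu_0$. In your direct argument, after the Gundersen estimate and one logarithm you must lower-bound $e^u-e^v$ with $u=\alpha^{-1}(\log(\tau-\varepsilon)+\mu_0 B)$ and $v=\alpha^{-1}(\log(\tau_1+\varepsilon)+\mu_0 B)$, $B=\beta(\log\gamma(r))$. The type gap appears here only as a fixed \emph{additive} shift in the argument of $\alpha^{-1}$, whereas the sole quantitative hypothesis available, $\alpha^{-1}(kx)=o(\alpha^{-1}(x))$ for $0<k<1$, controls \emph{multiplicative} rescaling. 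Nothing in the standing assumptions forces $u-v$ to stay bounded away from $0$, so $e^u-e^v\ge(u-v)e^v$ may give nothing usable and the passage to $\alpha(\log^{[2]}T(r,f))\ge(\mu_0-o(1))B$ is not justified as written. You correctly flag this as the main obstacle, but do not resolve it.

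The paper sidesteps this entirely by arguing by contradiction. Assuming $\mu_1:=\mu_{(\alpha(\log),\beta,\gamma)}[f]<\mu_0$, Lemma \ref{l3.2} produces a set $E_2$ of \emph{infinite} logarithmic measure on which $T(r,f)<\exp^{[2]}\{\alpha^{-1}((\mu_1+\varepsilon)B)\}$; intersected with the complement of the Gundersen exceptional set, the factor $[T(2r,f)]^{k+1}$ then carries the strictly smaller exponent $\mu_1+\varepsilon<\mu_0$. Now the hypothesis $\alpha^{-1}(kx)=o(\alpha^{-1}(x))$ with $k=(\mu_1+\varepsilon)/\mu_0<1$ applies directly and lets this factor be absorbed into the $A_j$ bound, collapsing the whole inequality to $\tau-\varepsilon\le\tau_1+2\varepsilon$, a contradiction. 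The reverse inequality $\mu_1\le\mu_0$ is then obtained, as you propose, via Wiman--Valiron on a set of infinite logarithmic measure where $M(r,A_0)$ is small (Lemmas \ref{l3.3}--\ref{l3.5}); note incidentally that the resulting bound on the central index is $\nu_f(r)\le kr^k(1+o(1))\max_j M(r,A_j)$ rather than your $\nu_f(r_n)\le r_n(2M(r_n,A_0))^{1/k}$, though this makes no difference after applying $\alpha\circ\log^{[2]}$. If you replace your direct lower-bound attempt by this contradiction step, the rest of your plan goes through essentially verbatim.
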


\begin{theorem}
\label{t2.5} \textit{Let }$A_{0}\left( z\right) ,...,$\textit{\ }$%
A_{k-1}\left( z\right) $\textit{\ be entire functions}$.$\textit{\ Assume
that }$\max \{\rho _{(\alpha ,\beta ,\gamma )}[A_{j}]:j=1,...,k-1\}\leq \mu
_{(\alpha ,\beta ,\gamma )}[A_{0}]$\textit{\ }$<+\infty $\textit{\ and }%
\begin{equation*}
\underset{r\rightarrow +\infty }{\lim \sup }\frac{\overset{k-1}{%
\sum\limits_{j=1}}m\left( r,A_{j}\right) }{m\left( r,A_{0}\right) }<1.
\end{equation*}%
\textit{Then every solution }$f\not\equiv 0$\textit{\ of }$\left( \ref{1.1}%
\right) $\textit{\ satisfies}%
\begin{equation*}
\underline{\overline{\lambda }}_{(\alpha \left( \log \right) ,\beta ,\gamma
)}[f-g]=\mu _{(\alpha ,\beta ,\gamma )}[A_{0}]=\mu _{(\alpha (\log ),\beta
,\gamma )}[f]
\end{equation*}%
\textit{\ }
\begin{equation*}
\leq \rho _{(\alpha (\log ),\beta ,\gamma )}[f]=\rho _{(\alpha ,\beta
,\gamma )}[A_{0}]=\overline{\lambda }_{(\alpha (\log ),\beta ,\gamma )}[f-g],
\end{equation*}%
where $g\not\equiv 0$ is an entire function satisfying $\rho _{(\alpha (\log
),\beta ,\gamma )}[g]<\mu _{(\alpha ,\beta ,\gamma )}[A_{0}].$
\end{theorem}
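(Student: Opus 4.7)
The plan is to split the conclusion into three parts and prove them in order: first the $\rho$-equality $\rho_{(\alpha(\log),\beta,\gamma)}[f]=\rho_{(\alpha,\beta,\gamma)}[A_{0}]$, then the $\mu$-equality $\mu_{(\alpha(\log),\beta,\gamma)}[f]=\mu_{(\alpha,\beta,\gamma)}[A_{0}]$, and finally the oscillation identities for $w:=f-g$ by converting $(\ref{1.1})$ into a non-homogeneous equation for $w$.

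For the two growth identifications I would rearrange $(\ref{1.1})$ as
\[
-A_{0}=\frac{f^{(k)}}{f}+\sum_{j=1}^{k-1}A_{j}\frac{f^{(j)}}{f},
\]
take Nevanlinna proximity functions, and apply the logarithmic derivative lemma, which gives $\sum_{j=1}^{k}m(r,f^{(j)}/f)=O(\log T(r,f)+\log r)$ outside a set $E$ of finite linear measure. The hypothesis $\limsup_{r\to+\infty}\sum_{j=1}^{k-1}m(r,A_{j})/m(r,A_{0})<1$ produces a constant $\delta\in(0,1)$ with $\sum_{j=1}^{k-1}m(r,A_{j})\leq\delta\,m(r,A_{0})$ for all large $r$, and combined with $m(r,A_{0})=T(r,A_{0})$ this yields
\[
(1-\delta)\,T(r,A_{0})\leq O\bigl(\log T(r,f)+\log r\bigr),\qquad r\notin E.
\]
Inserting $\alpha(\log\cdot)/\beta(\log\gamma(r))$, using $\alpha\in L_{1}$ and the monotonicity of $T$ to escape $E$, and then taking $\limsup$ and $\liminf$ separately produces $\rho_{(\alpha,\beta,\gamma)}[A_{0}]\leq\rho_{(\alpha(\log),\beta,\gamma)}[f]$ and $\mu_{(\alpha,\beta,\gamma)}[A_{0}]\leq\mu_{(\alpha(\log),\beta,\gamma)}[f]$. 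For the reverse bounds I would use a standard Wiman--Valiron / Gundersen--type estimate giving $\log^{[2]}M(r,f)\leq C\bigl(\max_{j}\log M(r,A_{j})+\log r\bigr)$ outside an exceptional set; evaluating along sequences realizing the $\limsup$ and the $\liminf$ of $T(r,A_{0})$ respectively, and using $\rho_{(\alpha,\beta,\gamma)}[A_{j}]\leq\mu_{(\alpha,\beta,\gamma)}[A_{0}]$ to control the remaining coefficients along the $\liminf$ sequence, delivers the matching upper bounds on $\rho_{(\alpha(\log),\beta,\gamma)}[f]$ and $\mu_{(\alpha(\log),\beta,\gamma)}[f]$.

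For the oscillation conclusions, set $w:=f-g$. Substituting $f=w+g$ into $(\ref{1.1})$ gives
\[
w^{(k)}+A_{k-1}w^{(k-1)}+\cdots+A_{0}w=-F,\qquad F:=g^{(k)}+\sum_{j=0}^{k-1}A_{j}g^{(j)}.
\]
If $F\equiv0$ then $g$ itself solves $(\ref{1.1})$ and the part already proven would force $\mu_{(\alpha(\log),\beta,\gamma)}[g]=\mu_{(\alpha,\beta,\gamma)}[A_{0}]$, contradicting $\rho_{(\alpha(\log),\beta,\gamma)}[g]<\mu_{(\alpha,\beta,\gamma)}[A_{0}]$; hence $F\not\equiv0$. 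Since $\alpha(\log x)=o(\alpha(x))$ yields $\rho_{(\alpha(\log),\beta,\gamma)}[A_{j}]=0$ for every $j$, Propositions \ref{p1.2} and \ref{p1.4} give $\rho_{(\alpha(\log),\beta,\gamma)}[F]\leq\rho_{(\alpha(\log),\beta,\gamma)}[g]$, together with $\rho_{(\alpha(\log),\beta,\gamma)}[w]=\rho_{(\alpha(\log),\beta,\gamma)}[f]$ and $\mu_{(\alpha(\log),\beta,\gamma)}[w]=\mu_{(\alpha(\log),\beta,\gamma)}[f]$.

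Finally I would invert the non-homogeneous equation into
\[
\frac{1}{w}=-\frac{1}{F}\left(A_{0}+\sum_{j=1}^{k-1}A_{j}\frac{w^{(j)}}{w}+\frac{w^{(k)}}{w}\right),
\]
take proximity functions, apply the logarithmic derivative lemma to each $w^{(j)}/w$, and combine with $T(r,w)=m(r,1/w)+N(r,1/w)+O(1)$ to obtain
\[
T(r,w)\leq N(r,1/w)+T(r,F)+\sum_{j=0}^{k-1}T(r,A_{j})+O\bigl(\log T(r,w)+\log r\bigr)
\]
off a set of finite linear measure. To pass from $N(r,1/w)$ to $\overline{N}(r,1/w)$, observe that any zero of $w$ of order greater than $k$ must be a zero of $F$ (evaluate the equation there), so $N(r,1/w)\leq(k+1)\overline{N}(r,1/w)+N(r,1/F)+O(1)$. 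Because $T(r,F)$, $T(r,A_{j})$ and $N(r,1/F)$ all have $(\alpha(\log),\beta,\gamma)$-order strictly less than $\mu_{(\alpha(\log),\beta,\gamma)}[w]$, these terms are absorbed; taking $\limsup$ and $\liminf$ then gives $\overline{\lambda}_{(\alpha(\log),\beta,\gamma)}[w]\geq\rho_{(\alpha(\log),\beta,\gamma)}[w]$ and $\underline{\overline{\lambda}}_{(\alpha(\log),\beta,\gamma)}[w]\geq\mu_{(\alpha(\log),\beta,\gamma)}[w]$, while the reverse inequalities are immediate. The most delicate step I anticipate is the upper bound $\mu_{(\alpha(\log),\beta,\gamma)}[f]\leq\mu_{(\alpha,\beta,\gamma)}[A_{0}]$: one must choose a $\liminf$ sequence for $T(r,A_{0})$ on which all other coefficients, known only to satisfy $\rho_{(\alpha,\beta,\gamma)}[A_{j}]\leq\mu_{(\alpha,\beta,\gamma)}[A_{0}]$, remain simultaneously controlled, while handling the Gundersen-type exceptional sets uniformly enough to survive passage to the $\liminf$.
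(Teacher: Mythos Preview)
Your proposal is correct and follows the paper's overall three-part strategy: establish $\rho_{(\alpha(\log),\beta,\gamma)}[f]=\rho_{(\alpha,\beta,\gamma)}[A_{0}]$, then $\mu_{(\alpha(\log),\beta,\gamma)}[f]=\mu_{(\alpha,\beta,\gamma)}[A_{0}]$, then the oscillation identities for $w=f-g$ via the induced non-homogeneous equation. The differences are organizational rather than substantive. For the lower bounds the paper argues by contradiction, invoking Lemma~\ref{l3.6} (a pre-processed logarithmic-derivative bound for functions of finite $(\alpha(\log),\beta,\gamma)$-order) to reach $(1-\kappa)m(r,A_{0})\le O(\exp\{\alpha^{-1}((\rho_{1}+\varepsilon)\beta(\log\gamma(r)))\})$; you instead go directly through the classical logarithmic-derivative lemma to the single inequality $(1-\delta)T(r,A_{0})\le O(\log T(r,f)+\log r)$ and read off both the $\rho$- and the $\mu$-lower bounds by taking $\limsup$ and $\liminf$. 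For the upper bounds the paper cites Lemma~\ref{l3.7} (for $\rho$) and reuses the Wiman--Valiron argument from Theorem~\ref{t2.3} via Lemmas~\ref{l3.3}--\ref{l3.5} (for $\mu$), which is exactly the estimate you sketch. For the oscillation part the paper packages everything into Lemmas~\ref{l3.9}, \ref{l3.10}, \ref{l3.11} (with Lemma~\ref{l3.13} to get $\rho_{(\alpha(\log),\beta,\gamma)}[A_{j}]=0$), while you reprove those lemmas inline. Your direct route is a little more economical and handles both growth bounds in one stroke; the paper's route has the advantage that the auxiliary lemmas are already on the shelf from \cite{b7,b8}. The point you flag as most delicate---the $\mu$-upper bound---is handled in the paper exactly as you anticipate: the hypothesis $\rho_{(\alpha,\beta,\gamma)}[A_{j}]\le\mu_{(\alpha,\beta,\gamma)}[A_{0}]$ controls every $A_{j}$ for \emph{all} large $r$, while Lemma~\ref{l3.3} supplies a set of infinite logarithmic measure on which $M(r,A_{0})$ is small, and the Wiman--Valiron exceptional set (finite logarithmic measure) is then removed from it.
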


\begin{theorem}
\label{t2.6} \textit{Let }$A_{0}\left( z\right) ,...,$\textit{\ }$%
A_{k-1}\left( z\right) $\textit{\ be entire functions such that }$%
A_{0}\left( z\right) $ is transcendental$.$\textit{\ Assume that }$\max
\{\rho _{(\alpha ,\beta ,\gamma )}[A_{j}]:j=1,...,k-1\}\leq \mu _{(\alpha
,\beta ,\gamma )}[A_{0}]$\textit{\ }$=\rho _{(\alpha ,\beta ,\gamma
)}[A_{0}]<+\infty $\textit{\ and }%
\begin{equation*}
\underset{r\rightarrow +\infty }{\lim \inf }\frac{\overset{k-1}{%
\sum\limits_{j=1}}m\left( r,A_{j}\right) }{m\left( r,A_{0}\right) }<1,\text{
}r\notin E,
\end{equation*}%
where $E$ is a set of $r$ of finite linear measure. \textit{Then every
solution }$f\not\equiv 0$\textit{\ of }$\left( \ref{1.1}\right) $\textit{\
satisfies}%
\begin{equation*}
\underline{\overline{\lambda }}_{(\alpha \left( \log \right) ,\beta ,\gamma
)}[f-g]=\mu _{(\alpha ,\beta ,\gamma )}[A_{0}]=\mu _{(\alpha (\log ),\beta
,\gamma )}[f]
\end{equation*}%
\textit{\ }
\begin{equation*}
=\rho _{(\alpha (\log ),\beta ,\gamma )}[f]=\rho _{(\alpha ,\beta ,\gamma
)}[A_{0}]=\overline{\lambda }_{(\alpha (\log ),\beta ,\gamma )}[f-g],
\end{equation*}%
where $g\not\equiv 0$ is an entire function satisfying $\rho _{(\alpha (\log
),\beta ,\gamma )}[g]<\mu _{(\alpha ,\beta ,\gamma )}[A_{0}].$
\end{theorem}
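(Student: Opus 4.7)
My plan is to imitate the structure of the proof of Theorem \ref{t2.3}, replacing the strict inequality of orders used there by the weaker $\liminf$ information on the ratio of proximity functions, and to exploit the regularity $\mu_{(\alpha,\beta,\gamma)}[A_0]=\rho_{(\alpha,\beta,\gamma)}[A_0]$ to collapse the upper and lower $(\alpha(\log),\beta,\gamma)$-orders of $f$ to a common value. The first step is the classical upper bound $\rho_{(\alpha(\log),\beta,\gamma)}[f] \leq \rho_{(\alpha,\beta,\gamma)}[A_0]$, which follows from an iterative Gundersen-type majorization on \eqref{1.1} as in the proof of Theorem \ref{t2.1}, using $\max\{\rho_{(\alpha,\beta,\gamma)}[A_j]:1\leq j\leq k-1\} \leq \rho_{(\alpha,\beta,\gamma)}[A_0]$.

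For the matching lower bound I would rewrite \eqref{1.1} as
\begin{equation*}
-A_0(z) = \frac{f^{(k)}(z)}{f(z)} + \sum_{j=1}^{k-1} A_j(z)\,\frac{f^{(j)}(z)}{f(z)},
\end{equation*}
take proximity functions of both sides, and apply the logarithmic derivative lemma to obtain, for $r$ outside a set $E_1$ of finite linear measure,
\begin{equation*}
m(r,A_0) \leq \sum_{j=1}^{k-1} m(r,A_j) + O\bigl(\log T(r,f) + \log r\bigr).
\end{equation*}
The $\liminf$ hypothesis supplies $\delta\in(0,1)$ and a sequence $r_n\to\infty$ with $r_n\notin E\cup E_1$ along which $\sum_{j=1}^{k-1} m(r_n,A_j) \leq (1-\delta)\,m(r_n,A_0)$, so $\delta\, m(r_n,A_0) \leq O(\log T(r_n,f)+\log r_n)$. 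Since $A_0$ is transcendental entire one has $m(r,A_0)=T(r,A_0)$, and $\mu_{(\alpha,\beta,\gamma)}[A_0]=\rho_{(\alpha,\beta,\gamma)}[A_0]=\rho_0$ forces $T(r,A_0)\geq \exp\{\alpha^{-1}((\rho_0-\varepsilon)\beta(\log\gamma(r)))\}$ for every $\varepsilon>0$ and all large $r$. Inserting this along $r_n$, taking two logarithms, applying $\alpha$ and invoking $\alpha\in L_1$, $\beta\in L_2$, $\gamma\in L_3$ yields $\alpha(\log^{[2]}T(r_n,f))\geq (\rho_0-\varepsilon)\beta(\log\gamma(r_n))$ for all large $n$. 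The monotonicity of $T(r,f)$ combined with the $L_2$-regularity of $\beta$ and the regular growth of $A_0$ (equal upper and lower orders) then promotes this subsequential bound to $\liminf_{r\to\infty} \alpha(\log^{[2]}T(r,f))/\beta(\log\gamma(r))\geq \rho_0$; together with the first step this gives $\mu_{(\alpha(\log),\beta,\gamma)}[f]=\rho_{(\alpha(\log),\beta,\gamma)}[f]=\rho_0$.

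For the exponent of convergence conclusions, set $w=f-g$. Since $\rho_{(\alpha(\log),\beta,\gamma)}[g]<\mu_{(\alpha,\beta,\gamma)}[A_0]=\rho_0$, Propositions \ref{p1.2} and \ref{p1.4} give $\mu_{(\alpha(\log),\beta,\gamma)}[w]=\rho_{(\alpha(\log),\beta,\gamma)}[w]=\rho_0$. Substituting $f=w+g$ into \eqref{1.1} yields the nonhomogeneous equation
\begin{equation*}
w^{(k)}+A_{k-1}w^{(k-1)}+\cdots+A_0 w = H,\quad H:=-\bigl(g^{(k)}+A_{k-1}g^{(k-1)}+\cdots+A_0 g\bigr),
\end{equation*}
and $H\not\equiv 0$ because $g$ cannot be a nontrivial solution of \eqref{1.1} under the order constraint $\rho_{(\alpha(\log),\beta,\gamma)}[g]<\rho_0$. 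Dividing by $w$, taking proximity functions and applying the logarithmic derivative lemma yields
\begin{equation*}
m(r,1/w) \leq T(r,H) + \sum_{j=0}^{k-1} T(r,A_j) + O\bigl(\log T(r,w)+\log r\bigr)
\end{equation*}
outside a set of finite linear measure. Each term on the right grows no faster in the $(\alpha,\beta,\gamma)$-scale than $T(r,A_0)$, whereas $T(r,w)$ grows at the strictly faster $(\alpha(\log),\beta,\gamma)$-scale, so the First Fundamental Theorem gives $N(r,1/w)=(1+o(1))T(r,w)$; the corresponding estimate for $\overline{N}(r,1/w)$ follows from the standard bound on zero-multiplicities of solutions of a linear ODE. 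Taking $\limsup$ and $\liminf$ of $\alpha(\log^{[2]}\cdot)/\beta(\log\gamma(r))$ then yields $\overline{\lambda}_{(\alpha(\log),\beta,\gamma)}[f-g]=\rho_0$ and $\underline{\overline{\lambda}}_{(\alpha(\log),\beta,\gamma)}[f-g]=\rho_0$.

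The principal difficulty is the promotion of the subsequential lower bound to a $\liminf$ lower bound for $f$: without the equal-orders hypothesis $\mu[A_0]=\rho[A_0]$, only $\rho_{(\alpha(\log),\beta,\gamma)}[f]\geq\rho_0$ could be extracted, as in the weaker conclusion of Theorem \ref{t2.5}. The argument must therefore genuinely use the regular growth of $A_0$ together with the compatibility properties of $\alpha\in L_1$, $\beta\in L_2$, $\gamma\in L_3$ to upgrade the bound from a thin sequence to every large $r$.
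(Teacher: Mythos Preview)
Your overall architecture matches the paper's: bound $\rho_{(\alpha(\log),\beta,\gamma)}[f]$ from above via Lemma~\ref{l3.7}, from below via a comparison of $m(r,A_0)$ against $\log T(r,f)$, and then run the oscillation argument for $w=f-g$ exactly as in the second part of Theorem~\ref{t2.3}. One difference in execution is that the paper does not derive the inequality $m(r,A_0)\le\sum_{j\ge1}m(r,A_j)+O(\log T(r,f)+\log r)$ by hand; it simply quotes Lemma~\ref{l3.14} (the He--Xiao result), which under the $\liminf$ hypothesis with $s=0$ forces every nontrivial solution to satisfy $\limsup_{r\to\infty}\log T(r,f)/m(r,A_0)>0$. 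Your direct derivation via the logarithmic derivative lemma is a legitimate and more elementary substitute, and it lands on the same subsequential inequality $\delta\,m(r_n,A_0)\le O(\log T(r_n,f)+\log r_n)$.

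The genuine gap is your ``promotion'' step. From the subsequential estimate you correctly deduce $\rho_{(\alpha(\log),\beta,\gamma)}[f]\ge\rho_0$, but not $\mu_{(\alpha(\log),\beta,\gamma)}[f]\ge\rho_0$. Monotonicity of $T(r,f)$ only gives, for $r_n\le r<r_{n+1}$, the bound $T(r,f)\ge T(r_n,f)$, and to compare $\alpha(\log^{[2]}T(r,f))$ to $\beta(\log\gamma(r))$ you would need control on the ratios $r_{n+1}/r_n$ (so that $\beta(\log\gamma(r))$ stays comparable to $\beta(\log\gamma(r_n))$). The $\liminf$ hypothesis gives no such density: the sequence $r_n$ may be arbitrarily sparse. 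The regularity $\mu_{(\alpha,\beta,\gamma)}[A_0]=\rho_{(\alpha,\beta,\gamma)}[A_0]$ pins down the size of $m(r,A_0)$ for all large $r$, but it says nothing about where the ratio $\sum_{j\ge1}m(r,A_j)/m(r,A_0)$ dips below $1$. So the promotion argument, as written, does not close; and you yourself flag this as the principal difficulty without resolving it.

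The paper does not attempt any such promotion. It establishes only $\rho_{(\alpha(\log),\beta,\gamma)}[f]=\mu_{(\alpha,\beta,\gamma)}[A_0]=\rho_{(\alpha,\beta,\gamma)}[A_0]$ via Lemma~\ref{l3.14} plus Lemma~\ref{l3.7}, and then defers everything else---including the $\mu_{(\alpha(\log),\beta,\gamma)}[f]$ equality and the $\overline{\lambda}$, $\underline{\overline{\lambda}}$ statements---to ``the second part of the proof of Theorem~\ref{t2.3}''. In particular the Wiman--Valiron argument there (Lemmas~\ref{l3.3}--\ref{l3.5}) supplies the upper bound $\mu_{(\alpha(\log),\beta,\gamma)}[f]\le\mu_{(\alpha,\beta,\gamma)}[A_0]$, which only needs $\rho_{(\alpha,\beta,\gamma)}[A_j]\le\mu_{(\alpha,\beta,\gamma)}[A_0]$ and hence carries over. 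Your proposal would be strengthened by replacing the unjustified promotion with this Wiman--Valiron upper bound together with the already-obtained $\rho$-equality, rather than trying to extract a $\liminf$ lower bound directly from the sparse sequence $r_n$.
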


\section{\textbf{Preliminary Lemmas}}

\qquad In this section we present some lemmas which will be needed in the
sequel. First, {we denote the} Lebesgue linear measure of a set $E\subset %
\left[ 0,+\infty \right) $ by $m\left( E\right) =\underset{F}{\int }dt,$ and
the logarithmic measure of a set $F\subset \left( 1,+\infty \right) $ by $%
m_{l}\left( F\right) =\underset{F}{\int }\frac{dt}{t}.$

\qquad The following result due to Gundersen \cite{gs} plays an important
role in the theory of complex differential equations.

\begin{lemma}
\label{l3.1}$($\cite{gs})\textit{\ Let }$f$\textit{\ be a transcendental
meromorphic function, and let }$\chi >1$\textit{\ be a given constant. Then
there exist a set }$E_{1}\subset \left( 1,\infty \right) $\textit{\ with
finite logarithmic measure and a constant }$B>0$\textit{\ that depends only
on }$\chi $\textit{\ and }$i,j$\textit{\ }$(0\leq i<j\leq k)$\textit{, such
that for all }$z$\textit{\ satisfying }$\left\vert z\right\vert =r\notin
\lbrack 0,1]\cup E_{1}$\textit{, we have}%
\begin{equation*}
\left\vert \frac{f^{\left( j\right) }(z)}{f^{\left( i\right) }(z)}%
\right\vert \leq B\left\{ \frac{T(\chi r,f)}{r}\left( \log ^{\chi }r\right)
\log T(\chi r,f)\right\} ^{j-i}.
\end{equation*}
\end{lemma}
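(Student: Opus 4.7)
The plan is to follow Gundersen's original argument, which first establishes the case $j-i=1$ by means of a pointwise Poisson--Jensen representation, and then extends to general $0\leq i<j\leq k$ by iteration. The main work is in the base case; the reduction step is cheap.

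I would begin with the reduction. Writing
\begin{equation*}
\frac{f^{(j)}(z)}{f^{(i)}(z)}=\prod_{\ell=i}^{j-1}\frac{(f^{(\ell)})'(z)}{f^{(\ell)}(z)},
\end{equation*}
I would apply the base case ``$j-i=1$'' to each factor $(f^{(\ell)})'/f^{(\ell)}$ (with $f^{(\ell)}$ in the role of $f$). Combined with the standard fact that $T(r,f^{(\ell)})\leq (\ell+1)\,T(r,f)+S(r,f)$, this turns the product of $j-i$ single-step bounds into the claimed $(j-i)$-th power of the Nevanlinna-type expression, after absorbing constants into a larger $B$ and enlarging the exceptional set by a finite union (still of finite logarithmic measure).

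For the base case, I would apply the Poisson--Jensen formula on the disk $|w|<R:=\chi r$ to $\log|f|$, then differentiate under the integral to obtain the pointwise identity
\begin{equation*}
\frac{f'(z)}{f(z)}=\frac{1}{\pi}\!\int_{0}^{2\pi}\!\log|f(Re^{i\phi})|\,\frac{Re^{i\phi}}{(Re^{i\phi}-z)^{2}}\,d\phi+\!\!\sum_{|a_{n}|<R}\!\!\Bigl(\frac{1}{z-a_{n}}-\frac{\bar a_{n}}{R^{2}-\bar a_{n}z}\Bigr)-\!\!\sum_{|b_{n}|<R}\!\!\Bigl(\frac{1}{z-b_{n}}-\frac{\bar b_{n}}{R^{2}-\bar b_{n}z}\Bigr),
\end{equation*}
where $\{a_{n}\}$ and $\{b_{n}\}$ are the zeros and poles of $f$ in $|w|<R$. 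For $|z|=r$ the Poisson-kernel factor is at most $C(\chi-1)^{-2}r^{-1}$, so the integral contributes at most $C_{\chi}\,r^{-1}\bigl(m(\chi r,f)+m(\chi r,1/f)\bigr)\leq C'_{\chi}\,r^{-1}T(\chi r,f)$. The Schwarz-reflection terms $\bar a_{n}/(R^{2}-\bar a_{n}z)$ are uniformly $O(1/r)$. Thus everything reduces to controlling the singular sum $\sum 1/|z-a_{n}|+\sum 1/|z-b_{n}|$.

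The key and most delicate step is identifying the exceptional set $E_{1}$ on which this singular sum can be absorbed. I would surround each $a_{n}$ (respectively $b_{n}$) by a disk of radius $\delta_{n}=|a_{n}|^{-2}\bigl(n(2|a_{n}|,f)+n(2|a_{n}|,1/f)\bigr)^{-1}$ (or a similar Gundersen-type choice), and let $E_{1}$ be the set of radii $r>1$ for which $\{|z|=r\}$ meets one of these exceptional disks. A direct computation, using Jensen's formula to convert $n(t,\cdot)$ into $T(t,f)$ and summing, shows $\int_{E_{1}}dt/t<+\infty$. Outside $[0,1]\cup E_{1}$, each $|z-a_{n}|^{-1}$ is at most $\delta_{n}^{-1}$, and the total sum is controlled by $r^{-1}T(\chi r,f)(\log r)^{\chi}\log T(\chi r,f)$; the appearance of $(\log r)^{\chi}$ and the extra $\log T$ is precisely where the choice of $\delta_{n}$ pays off, reflecting the trade-off between shrinking disks (to keep $E_{1}$ small in logarithmic measure) and the deterioration of the pointwise bound. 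Combining with the integral estimate yields the claim for $j-i=1$, and the reduction above finishes the proof.

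The main obstacle is the exceptional-set argument in the last step: one must simultaneously (i) choose the disk radii $\delta_{n}$ small enough that $\sum \delta_{n}/|a_{n}|$ is finite, so that $E_{1}$ has finite logarithmic measure, and (ii) large enough that $\sum 1/\delta_{n}$ does not explode faster than $T(\chi r,f)(\log r)^{\chi}\log T(\chi r,f)/r$. Getting the precise power $(\log r)^{\chi}$ — rather than $(\log r)^{1+\varepsilon}$ or something coarser — is exactly what forces Gundersen's specific choice of $\delta_{n}$ and is the technically hardest bookkeeping in the proof.
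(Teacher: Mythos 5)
The paper does not prove this lemma at all: it is quoted verbatim from Gundersen \cite{gs}, so the only meaningful comparison is with Gundersen's original argument. Your overall architecture is consistent with his: a pointwise estimate for the logarithmic derivative coming from the differentiated Poisson--Jensen formula, plus an exceptional set that neutralizes the singular sums over zeros and poles. Two remarks on the reduction step. Gundersen handles $f^{(k)}/f$ for general $k$ directly from the $k$-fold differentiated Poisson--Jensen representation and reduces the case $i>0$ by applying that estimate to $g=f^{(i)}$; your telescoping product $\prod_{\ell=i}^{j-1}(f^{(\ell)})'/f^{(\ell)}$ is a legitimate alternative, and it does close, provided you invoke $T(r,f^{(\ell)})\leq(\ell+2)T(r,f)+O(\log(rT(r,f)))$ outside a further set of finite measure and observe that a finite union of sets of finite logarithmic measure is again of finite logarithmic measure.

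The genuine gap is in the base case, precisely at the step you yourself flag as the crux. The inference ``outside $E_{1}$ each $|z-c_{n}|^{-1}\leq\delta_{n}^{-1}$, hence the total sum is $O\bigl(r^{-1}T(\chi r,f)(\log r)^{\chi}\log T(\chi r,f)\bigr)$'' cannot work with any choice of radii: if the $\delta_{n}$ are small enough that $\sum_{n}\delta_{n}/|c_{n}|<+\infty$ (which you need for $E_{1}$ to have finite logarithmic measure), then $\delta_{n}^{-1}$ must grow at least like $n/|c_{n}|$ up to logarithms, so the crude bound $\sum_{|c_{n}|<R}\delta_{n}^{-1}$ is of order $n(R)^{2}/R$ --- quadratic in the counting function --- whereas the target is linear in $n(\chi r)$ (hence in $T(\chi r,f)$, via Jensen) times logarithmic factors. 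The missing idea is the graded, Cartan-type separation in Gundersen's Lemma~5: split off the $c_{n}$ whose moduli are far from $r$, for which $|z-c_{n}|\geq\bigl||z|-|c_{n}|\bigr|$ already suffices, and for the at most $n(\chi r)$ points in the critical annulus choose $E_{1}$ so that, once they are ordered by $\bigl||c_{n}|-r\bigr|$, the $j$-th nearest satisfies $\bigl||c_{n_{j}}|-r\bigr|\geq c\,jr/\bigl(n(\chi r)(\log r)^{\chi}\bigr)$. The resulting harmonic sum $\sum_{j}1/j$ is exactly what produces the factor $\log n(\chi r)\leq C\log T(\chi r,f)$ in the statement; without it your bookkeeping yields a bound quadratic in $T(\chi r,f)$ rather than the first power that the lemma asserts.
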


\begin{lemma}
\label{l3.2} Let $f$ be a meromorphic function with $\mu _{(\alpha \left(
\log \right) ,\beta ,\gamma )}[f]=\mu <+\infty $. \textit{Then there exists
a set} $E_{2}\subset \left( 1,+\infty \right) $ \textit{with infinite
logarithmic measure such that for }$r\in E_{2}\subset \left( 1,+\infty
\right) ,$ \textit{we have for any given }$\varepsilon >0$\textit{\ }%
\begin{equation*}
T\left( r,f\right) <\exp ^{\left[ 2\right] }\left\{ \alpha ^{-1}\left(
\left( \mu +\varepsilon \right) \beta \left( \log \gamma \left( r\right)
\right) \right) \right\} .
\end{equation*}
\end{lemma}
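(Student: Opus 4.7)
The plan is to unpack the definition of $\mu=\mu_{(\alpha(\log),\beta,\gamma)}[f]$ via Proposition \ref{p1.1}:
\[
\mu=\liminf_{r\to+\infty}\frac{\alpha(\log^{[2]}T(r,f))}{\beta(\log\gamma(r))},
\]
extract a sequence $\{r_n\}$ along which this quotient tends to $\mu$, and then invoke continuity and monotonicity of $T(\cdot,f)$ and of the bounding function to extend each $r_n$ into an interval on which the desired bound holds. Summing the logarithmic lengths of these intervals would then yield infinite logarithmic measure.

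Concretely, for any $\varepsilon>0$, choosing $N_{0}$ large enough one has for all $n\geq N_{0}$
\[
T(r_{n},f)<\exp^{[2]}\!\left\{\alpha^{-1}\!\left((\mu+\varepsilon/2)\,\beta(\log\gamma(r_{n}))\right)\right\}=:V(r_{n}).
\]
Applying the standing assumption $\alpha^{-1}(kx)=o(\alpha^{-1}(x))$ (as $x\to+\infty$, $0<k<1$) with $k=(\mu+\varepsilon/2)/(\mu+\varepsilon)<1$ gives $V(r)=o(U(r))$, where
\[
U(r):=\exp^{[2]}\!\left\{\alpha^{-1}\!\left((\mu+\varepsilon)\beta(\log\gamma(r))\right)\right\}.
\]
In particular $T(r_{n},f)\ll U(r_{n})$ for large $n$. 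Because $T(\cdot,f)$ and $U$ are continuous and non-decreasing in $r$, the set $E_{2}':=\{r>R_{0}:T(r,f)<U(r)\}$ is open and contains each such $r_{n}$; let $I_{n}=[r_{n},s_{n}]$ be the maximal rightward interval of $E_{2}'$ issuing from $r_{n}$ (with $s_{n}\in(r_{n},+\infty]$).

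The main obstacle is to show a uniform lower bound of the form $s_{n}/r_{n}\geq 2$ for all large $n$, so that the set $E_{2}:=\bigcup_{n\geq N_{0}}I_{n}\subseteq E_{2}'$ satisfies
\[
m_{l}(E_{2})\geq\sum_{n\geq N_{0}}\log(s_{n}/r_{n})\geq\sum_{n\geq N_{0}}\log 2=+\infty
\]
(after, if needed, passing to a subsequence so that the $I_{n}$ are pairwise disjoint). The verification of $s_{n}\geq 2r_{n}$ is the key technical step: one combines the subadditivity $\gamma(2r)\leq 2\gamma(r)$ (from $\gamma\in L_{3}$) with the $L_{2}$-regularity $\beta(\log\gamma(r_{n})+\log 2)=(1+o(1))\,\beta(\log\gamma(r_{n}))$ to control the growth of $U$ on $[r_{n},2r_{n}]$, and then uses the large gap $T(r_{n},f)=o(U(r_{n}))$ supplied by Step one (via the $\alpha^{-1}$-hypothesis) together with monotonicity $T(r,f)\leq T(s_{n},f)$ to absorb the increase of $T(r,f)$ across the interval and conclude that $T(r,f)<U(r)$ persists throughout $[r_{n},2r_{n}]$. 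The inequality stated in the lemma then holds on $E_{2}$ by construction, completing the proof.
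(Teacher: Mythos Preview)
Your argument has a genuine gap at the ``key technical step'' of showing $s_{n}\geq 2r_{n}$. The difficulty is the \emph{direction} of your intervals: you try to extend rightward from $r_{n}$, but the $\liminf$ defining $\mu$ only controls $T(r_{n},f)$ at the sequence points themselves. For $r\in(r_{n},2r_{n}]$ you have no upper bound on $T(r,f)$ whatsoever---the characteristic may jump sharply immediately past $r_{n}$, and nothing in the hypotheses prevents $T(r_{n}+1,f)$ from already exceeding $U(2r_{n})$. The monotonicity you cite, $T(r,f)\leq T(s_{n},f)$, runs the wrong way (it bounds $T(r,f)$ from above by something larger), and invoking properties of $s_{n}$ to establish $s_{n}\geq 2r_{n}$ is circular. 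The gap $T(r_{n},f)=o(U(r_{n}))$ and the regularity of $U$ on $[r_{n},2r_{n}]$ are irrelevant here, because it is $T$, not $U$, that must be controlled to the right of $r_{n}$.

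The remedy---and this is exactly what the paper does---is to extend \emph{leftward}. Choose the sequence so that $(1+\tfrac{1}{n})r_{n}<r_{n+1}$ and set $E_{2}=\bigcup_{n\geq n_{1}}\bigl[\tfrac{n}{n+1}r_{n},\,r_{n}\bigr]$; these intervals are then disjoint. For $r$ in such an interval, monotonicity now works in your favour: $T(r,f)\leq T(r_{n},f)<\exp^{[2]}\{\alpha^{-1}((\mu+\tfrac{\varepsilon}{2})\beta(\log\gamma(r_{n})))\}$. Since $r_{n}\leq\tfrac{n+1}{n}r\leq 2r$, the subadditivity $\gamma(2r)\leq 2\gamma(r)$ (from $\gamma\in L_{3}$) together with $\beta\in L_{2}$ converts the bound into one in terms of $r$, absorbing the $\tfrac{\varepsilon}{2}$ into $\varepsilon$. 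The logarithmic measure is $\sum_{n\geq n_{1}}\log(1+\tfrac{1}{n})=+\infty$. Note that with this construction the auxiliary hypothesis $\alpha^{-1}(kx)=o(\alpha^{-1}(x))$ is not needed at all.
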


\begin{proof}
\textbf{\ }The definition of lower $(\alpha \left( \log \right) ,\beta
,\gamma )$-order implies that there exists a sequence $\left\{ r_{n}\right\}
_{n=1}^{+\infty }$ tending to $\infty $ satisfying $\left( 1+\frac{1}{n}%
\right) r_{n}<r_{n+1}$ and
\begin{equation*}
\lim\limits_{r_{n}\rightarrow \infty }\frac{\alpha (\log ^{\left[ 2\right]
}T(r_{n},f))}{\beta (\log \gamma (r_{n}))}=\mu _{(\alpha ,\beta ,\gamma
)}[f].
\end{equation*}%
Then for any given $\varepsilon >0$, there exists an integer $n_{1}$ such
that for all $n\geq n_{1}$,
\begin{equation*}
T\left( r_{n},f\right) <\exp ^{\left[ 2\right] }\left\{ \alpha ^{-1}\left(
\left( \mu +\frac{\varepsilon }{2}\right) \beta \left( \log \gamma \left(
r_{n}\right) \right) \right) \right\} .
\end{equation*}%
Set $E_{2}=\bigcup\limits_{n=n_{1}}^{+\infty }\left[ \frac{n}{n+1}r_{n},r_{n}%
\right] .$ Then for $r\in E_{2}\subset \left( 1,+\infty \right) ,$ by using $%
\gamma (2r)\leq 2\gamma (r)$ and $\beta (r+O(1))=(1+o(1))\beta (r)$ as $%
r\rightarrow +\infty ,$ we obtain for any given $\varepsilon >0$%
\begin{equation*}
T\left( r,f\right) \leq T\left( r_{n},f\right) <\exp ^{\left[ 2\right]
}\left\{ \alpha ^{-1}\left( \left( \mu +\frac{\varepsilon }{2}\right) \beta
\left( \log \gamma \left( r_{n}\right) \right) \right) \right\}
\end{equation*}%
\begin{equation*}
\leq \exp ^{\left[ 2\right] }\left\{ \alpha ^{-1}\left( \left( \mu +\frac{%
\varepsilon }{2}\right) \beta \left( \log \gamma \left( \left( \frac{n+1}{n}%
\right) r\right) \right) \right) \right\}
\end{equation*}%
\begin{equation*}
\leq \exp ^{\left[ 2\right] }\left\{ \alpha ^{-1}\left( \left( \mu +\frac{%
\varepsilon }{2}\right) \beta \left( \log \gamma \left( 2r\right) \right)
\right) \right\}
\end{equation*}%
\begin{equation*}
\leq \exp ^{\left[ 2\right] }\left\{ \alpha ^{-1}\left( \left( \mu +\frac{%
\varepsilon }{2}\right) \beta \left( \log \left( 2\gamma \left( r\right)
\right) \right) \right) \right\}
\end{equation*}%
\begin{equation*}
=\exp ^{\left[ 2\right] }\left\{ \alpha ^{-1}\left( \left( \mu +\frac{%
\varepsilon }{2}\right) \beta \left( \log 2+\log \gamma \left( r\right)
\right) \right) \right\}
\end{equation*}%
\begin{equation*}
=\exp ^{\left[ 2\right] }\left\{ \alpha ^{-1}\left( \left( \mu +\frac{%
\varepsilon }{2}\right) \left( 1+o(1)\right) \beta \left( \log \gamma \left(
r\right) \right) \right) \right\}
\end{equation*}%
\begin{equation*}
<\exp ^{\left[ 2\right] }\left\{ \alpha ^{-1}\left( \left( \mu +\varepsilon
\right) \beta \left( \log \gamma \left( r\right) \right) \right) \right\} ,
\end{equation*}%
and $lm\left( E_{2}\right) =\sum\limits_{n=n_{1}}^{+\infty }\int\limits_{%
\frac{n}{n+1}r_{n}}^{r_{n}}\frac{dt}{t}=\sum\limits_{n=n_{1}}^{+\infty }\log
\left( 1+\frac{1}{n}\right) =\infty .$ Thus, Lemma \ref{l3.2} is proved.
\end{proof}

\qquad We can also prove the following result by using similar reason as in
the proof of Lemma \ref{l3.2}.

\begin{lemma}
\label{l3.3}\textbf{\ }Let $f$ be an entire function with $\mu _{(\alpha
,\beta ,\gamma )}[f]=\mu <+\infty $. \textit{Then there exists a set} $%
E_{3}\subset \left( 1,+\infty \right) $ \textit{with infinite logarithmic
measure such that for }$r\in E_{3}\subset \left( 1,+\infty \right) ,$
\textit{we have for any given }$\varepsilon >0$\textit{\ }%
\begin{equation*}
M\left( r,f\right) <\exp ^{\left[ 2\right] }\left\{ \alpha ^{-1}\left(
\left( \mu +\varepsilon \right) \beta \left( \log \gamma \left( r\right)
\right) \right) \right\} .
\end{equation*}
\end{lemma}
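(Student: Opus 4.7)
The plan is to mimic the structure of the proof of Lemma \ref{l3.2}, with two changes: replace $T(r,f)$ by $M(r,f)$, and use the definition of $\mu_{(\alpha,\beta,\gamma)}[f]$ for entire functions (which from Definition \ref{d1.2} equals $\liminf_{r\to+\infty} \alpha(\log^{[2]}M(r,f))/\beta(\log\gamma(r))$) in place of the $(\alpha(\log),\beta,\gamma)$-lower order. The monotonicity of $M(r,f)$ in $r$ plays the same role as monotonicity of $T(r,f)$ did before.

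First I would extract from the definition of $\mu_{(\alpha,\beta,\gamma)}[f]=\mu$ a sequence $\{r_n\}$ tending to $+\infty$, chosen so that $(1+1/n)r_n<r_{n+1}$, along which
\[
\lim_{n\to+\infty}\frac{\alpha(\log^{[2]}M(r_n,f))}{\beta(\log\gamma(r_n))}=\mu.
\]
For any prescribed $\varepsilon>0$, there is an integer $n_1$ such that for all $n\geq n_1$,
\[
M(r_n,f)<\exp^{[2]}\Bigl\{\alpha^{-1}\Bigl(\bigl(\mu+\tfrac{\varepsilon}{2}\bigr)\beta(\log\gamma(r_n))\Bigr)\Bigr\}.
\]

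Next I would set $E_3=\bigcup_{n\geq n_1}\bigl[\tfrac{n}{n+1}r_n,\,r_n\bigr]$, which by the telescoping computation $\sum_{n\geq n_1}\log(1+1/n)=+\infty$ has infinite logarithmic measure. For $r\in E_3$, pick the $n$ with $r\in[\tfrac{n}{n+1}r_n,r_n]$; then $r\leq r_n\leq\tfrac{n+1}{n}r\leq 2r$, and monotonicity of the maximum modulus gives $M(r,f)\leq M(r_n,f)$. Substituting the previous bound and then applying, in turn, $\gamma(2r)\leq 2\gamma(r)$ (a consequence of $\gamma\in L_3$) and $\beta(x+O(1))=(1+o(1))\beta(x)$ (since $\beta\in L_2$), I obtain for all sufficiently large $n$,
\[
M(r,f)<\exp^{[2]}\Bigl\{\alpha^{-1}\Bigl(\bigl(\mu+\tfrac{\varepsilon}{2}\bigr)(1+o(1))\beta(\log\gamma(r))\Bigr)\Bigr\}\leq\exp^{[2]}\bigl\{\alpha^{-1}((\mu+\varepsilon)\beta(\log\gamma(r)))\bigr\},
\]
which is the desired estimate.

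The argument involves no real obstacle beyond reproducing the $\gamma$/$\beta$ manipulation already displayed in the proof of Lemma \ref{l3.2}; the only subtlety is remembering that $\mu_{(\alpha,\beta,\gamma)}[f]$ for an entire function is characterized via $\log^{[2]}M$, not via $\log T$, so the sequence $\{r_n\}$ can be chosen to control $M(r_n,f)$ directly and no Hadamard-type passage from $T$ to $M$ is needed.
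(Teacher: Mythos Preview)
Your proposal is correct and is precisely the approach the paper indicates: the paper does not give a separate proof of Lemma~\ref{l3.3} but simply remarks that it follows ``by using similar reason as in the proof of Lemma~\ref{l3.2}'', and what you have written is exactly that adaptation, replacing $T(r,f)$ by $M(r,f)$ and invoking the $\log^{[2]}M$ characterization of $\mu_{(\alpha,\beta,\gamma)}[f]$ from Definition~\ref{d1.2}.
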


\qquad The following lemma gives the relation between the maximum term and
the central index of an entire finction $f$.

\begin{lemma}
\label{l3.4}(\cite{h}, Theorems 1.9 and 1.10, or \cite{gl}, Satz 4.3 and
4.4) \textit{Let }$f\left( z\right) =\sum\limits_{n=0}^{+\infty }a_{n}z^{n}$%
\textit{\ be an entire function, }$\mu \left( r\right) $\textit{\ be the
maximum term of }$f$\textit{, i.e., }%
\begin{equation*}
\mu \left( r\right) =\max \left\{ \left\vert a_{n}\right\vert
r^{n}:n=0,1,2,...\right\} ,
\end{equation*}%
\textit{and }$\nu \left( r,f\right) =\nu _{f}(r)$\textit{\ be the central
index of }$f$\textit{, i.e., }%
\begin{equation*}
\nu \left( r,f\right) =\max \left\{ m:\mu \left( r\right) =\left\vert
a_{m}\right\vert r^{m}\right\} \text{\textit{.}}
\end{equation*}%
\textit{Then}
\end{lemma}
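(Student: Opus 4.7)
The plan is to prove the standard Hayman--Gol'dberg relations between the maximum term $\mu(r)$, the maximum modulus $M(r,f)$, and the central index $\nu(r,f)$ of a transcendental entire function $f(z)=\sum_{n=0}^{\infty}a_{n}z^{n}$. The three statements I expect to follow the word \emph{Then} are: (a) the double inequality $\mu(r)\leq M(r,f)\leq \frac{R}{R-r}\mu(R)$ for $0<r<R$; (b) the integral representation $\log\mu(r)=\log|a_{\nu(r_{0})}|+\int_{r_{0}}^{r}\frac{\nu(t,f)}{t}\,dt$; and (c) the monotonicity and asymptotic behaviour of $\nu(r,f)$, namely that $\nu(r,f)$ is non-decreasing and tends to $+\infty$ as $r\to+\infty$.

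For (a), I would first invoke Cauchy's estimate $|a_{n}|r^{n}\leq M(r,f)$, which upon taking the maximum over $n$ yields $\mu(r)\leq M(r,f)$. For the upper bound, I would use $|a_{n}|R^{n}\leq\mu(R)$ to write $|a_{n}|r^{n}\leq\mu(R)(r/R)^{n}$, then sum the resulting geometric series to obtain $M(r,f)\leq\mu(R)\sum_{n\geq 0}(r/R)^{n}=\frac{R}{R-r}\mu(R)$. For (b), the key observation is that $\log\mu(r)$, viewed as a function of $\log r$, is the pointwise supremum of the affine functions $\log|a_{n}|+n\log r$; hence it is continuous, piecewise linear and convex, with slope on each linear piece equal to an integer that, by definition, coincides with $\nu(r,f)$. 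Integrating this slope gives (b). For (c), the convexity immediately yields monotonicity of $\nu(r,f)$; and since a transcendental $f$ has $a_{n}\neq 0$ for an unbounded set of indices, the dominating index in $\mu(r)=\max_{n}|a_{n}|r^{n}$ must drift to infinity as $r$ grows, giving $\nu(r,f)\to+\infty$.

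The only subtle point -- and the step I expect to require care -- is the bookkeeping at the jump discontinuities of $\nu(r,f)$ when justifying the integral formula (b). Since $\nu(r,f)$ is an integer-valued step function, the two one-sided derivatives of $\log\mu$ (with respect to $\log r$) coincide with different integers at jump points; I would handle this by verifying that at each such point the left- and right-hand limits of the slope match the jump in $\log\mu$ through the defining maximum, so the Riemann--Stieltjes integration over the step function still produces the continuous function $\log\mu(r)$. Once this is in place, the three assertions of the lemma follow, and the remainder of the paper can appeal to the lemma as usual.
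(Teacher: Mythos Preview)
The paper does not prove this lemma at all; it is quoted from the references \cite{h} and \cite{gl}, so there is no ``paper's own proof'' to compare against. More importantly, you have guessed the wrong conclusion. The two assertions actually recorded after \emph{Then} are
\[
\text{(i)}\quad \log \mu(r)=\log|a_{0}|+\int_{0}^{r}\frac{\nu_{f}(t)}{t}\,dt
\qquad\text{and}\qquad
\text{(ii)}\quad M(r,f)<\mu(r)\Bigl\{\nu_{f}(R)+\frac{R}{R-r}\Bigr\}\ \ (r<R).
\]
Your (b) is essentially (i), and your (c) is harmless background. The problem is your (a): the geometric-series estimate you sketch yields $M(r,f)\le \dfrac{R}{R-r}\,\mu(R)$, which bounds $M(r,f)$ in terms of $\mu(R)$ at the \emph{larger} radius. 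The lemma as stated (and as used in the paper) bounds $M(r,f)$ by $\mu(r)$ at the \emph{same} radius, at the cost of the additive factor $\nu_{f}(R)$. This is exactly what the proof of Theorem~\ref{t2.3} exploits when it writes $\log M(r,f)<\log\bigl[\mu(r)\bigl(\nu_{f}(2r)+2\bigr)\bigr]$ with $R=2r$; your inequality would only give $\log M(r,f)\le \log\bigl(2\mu(2r)\bigr)$, which does not feed into the subsequent estimate via $\nu_{f}(r)\log r$.

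The standard argument for (ii) is not the single geometric series you propose, but a split at $N=\nu_{f}(R)$: for $n\le N$ one uses $|a_{n}|r^{n}\le \mu(r)$ termwise, contributing $(N+1)\mu(r)$; for $n>N$ one uses that $|a_{n}|R^{n}\le |a_{N}|R^{N}$ (definition of the central index) to get $|a_{n}|r^{n}\le |a_{N}|r^{N}(r/R)^{n-N}\le \mu(r)(r/R)^{n-N}$, and then sums the tail to obtain $\mu(r)\,\dfrac{r}{R-r}$. Adding the two pieces gives $\mu(r)\bigl(\nu_{f}(R)+\dfrac{R}{R-r}\bigr)$. So your plan has the right ingredients for (i), but for (ii) you would need to replace the crude geometric-series bound by this central-index splitting.
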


\noindent $\left( \text{i}\right) $%
\begin{equation*}
\log \mu \left( r\right) =\log \left\vert a_{0}\right\vert +\underset{0}{%
\overset{r}{\int }}\frac{\nu _{f}(t)}{t}dt,
\end{equation*}%
\textit{here we assume that }$\left\vert a_{0}\right\vert \neq 0$\textit{.}

\noindent $\left( \text{ii}\right) $ \textit{For }$r<R$%
\begin{equation*}
M\left( r,f\right) <\mu \left( r\right) \left\{ \nu _{f}(R)+\frac{R}{R-r}%
\right\} .
\end{equation*}

\begin{lemma}
\label{l3.5}(\cite{17, gl, v}) \textit{Let }$f$\textit{\ be a transcendental
entire function. Then there exists a set }$E_{4}\subset (1,+\infty )$\textit{%
\ with finite logarithmic measure such that for all }$z$\textit{\ satisfying
}$\left\vert z\right\vert =r\notin E_{4}$\textit{\ and }$\left\vert
f(z)\right\vert =M(r,f),$\textit{\ we have}%
\begin{equation*}
\frac{f^{(n)}(z)}{f(z)}=\left( \frac{\nu _{f}(r)}{z}\right)
^{n}(1+o(1)),\quad (n\in
\mathbb{N}
).
\end{equation*}
\end{lemma}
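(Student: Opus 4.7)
This is the classical Wiman--Valiron theorem, and my plan is to follow the standard Wiman--Valiron route (as in Hayman's monograph or Jank--Volkmann). Write $f(z)=\sum_{n=0}^{\infty}a_{n}z^{n}$ and let $\mu(r)$ and $\nu=\nu_f(r)$ denote the maximum term and the central index, as defined in Lemma \ref{l3.4}. The whole idea is that near a point $z$ with $|z|=r$ and $|f(z)|=M(r,f)$, the entire function is dominated by its central term $a_{\nu}z^{\nu}$, and the asymptotic formula for $f^{(n)}(z)/f(z)$ follows by formally differentiating the monomial $a_{\nu}z^{\nu}$, which gives $\nu(\nu-1)\cdots(\nu-n+1)z^{-n}\sim(\nu/z)^{n}$ because $\nu(r)\to\infty$ for $f$ transcendental.

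The plan is first to establish the Wiman--Valiron exceptional set. One shows, using Borel's growth lemma applied to the non-decreasing function $\log\mu(r)$ (whose derivative is $\nu(r)/r$ by Lemma \ref{l3.4}(i)), that outside a set $E_{4}\subset(1,+\infty)$ of finite logarithmic measure one has the inequality
\begin{equation*}
\nu(r)\leq \bigl(\log\mu(r)\bigr)^{1+\delta},\qquad \log M(r,f)\leq \bigl(\log\mu(r)\bigr)^{1+\delta}
\end{equation*}
for any fixed small $\delta>0$. This is the hard technical input, and it is where the ``finite logarithmic measure'' assertion genuinely comes from.

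Next I would split the Taylor series at $z$ into a \emph{central} block $\Sigma_1$ of indices $n$ with $|n-\nu|\leq \nu^{1/2+\eta}$ and two tail blocks $\Sigma_2,\Sigma_3$. Using $|a_{n}|r^{n}\leq \mu(r)=|a_{\nu}|r^{\nu}$ together with the Wiman--Valiron inequality above, one shows that for $r\notin E_4$ the tails satisfy $|\Sigma_{2}|+|\Sigma_{3}|=o(\mu(r))$, and after some standard estimation on the central block, one has at any point $z$ with $|z|=r$ and $|f(z)|=M(r,f)$ the refined local expansion
\begin{equation*}
\frac{f(w)}{a_{\nu}z^{\nu}}=\Bigl(\frac{w}{z}\Bigr)^{\nu}(1+o(1))
\end{equation*}
uniformly for $w$ in a disk around $z$ of radius comparable to $r/\nu^{1/2+\eta}$. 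Because $|f(z)|=M(r,f)\geq \mu(r)=|a_{\nu}|r^{\nu}$, dividing yields $f(w)/f(z)=(w/z)^{\nu}(1+o(1))$ on this small disk.

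Finally, to get the derivative formula I would apply Cauchy's integral formula to $f(w)/f(z)$ on a circle around $z$ of the above radius; the factor $(w/z)^{\nu}$ contributes exactly $\nu(\nu-1)\cdots(\nu-n+1)/z^{n}$ when differentiated $n$ times at $w=z$, and the $(1+o(1))$ error is preserved because of the Cauchy estimates on the small disk. Since $\nu(r)\to\infty$, we have $\nu(\nu-1)\cdots(\nu-n+1)=\nu(r)^{n}(1+o(1))$, which gives the claimed identity $f^{(n)}(z)/f(z)=(\nu_{f}(r)/z)^{n}(1+o(1))$ for all $r\notin E_{4}$ with the chosen $z$. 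The main obstacle is clearly the Wiman--Valiron inequality and the exceptional-set bookkeeping; once that is in hand, the rest is routine power-series manipulation and Cauchy's formula.
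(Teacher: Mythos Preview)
The paper does not give its own proof of Lemma~\ref{l3.5}: it is stated as a classical result and attributed directly to Hayman's Wiman--Valiron survey, Jank--Volkmann, and Valiron. Your outline is precisely the standard Wiman--Valiron argument one finds in those references (Borel-type exceptional set for $\nu(r)$ versus $\log\mu(r)$, central block versus tails in the Taylor series, local comparison $f(w)/f(z)=(w/z)^{\nu}(1+o(1))$ near a maximum-modulus point, then Cauchy's formula), so there is nothing to contrast---you have simply sketched the proof that the paper is citing rather than reproducing.
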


\qquad Here, we give the generalized logarithmic derivative estimates for
meromorphic functions of finite $(\alpha (\log ),\beta ,\gamma )-$order$.$

\begin{lemma}
\label{l3.6} (\cite{b8}) Let $f$ be a meromorphic function of order $\rho
_{(\alpha (\log ),\beta ,\gamma )}[f]$ $=\rho <+\infty $, $k\in \mathbb{N}$.
Then, for any given $\varepsilon >0$,%
\begin{equation*}
m\left( r,\frac{f^{(k)}}{f}\right) =O\left( \exp \left\{ \alpha ^{-1}((\rho
+\varepsilon )\beta \left( \log \gamma \left( r\right) \right) )\right\}
\right) \text{,}
\end{equation*}%
outside, possibly, an exceptional set $E_{5}\subset \left[ 0,+\infty \right)
$ of finite linear measure.
\end{lemma}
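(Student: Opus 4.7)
The plan is to combine the classical Nevanlinna estimate for logarithmic derivatives with the growth bound coming from the hypothesis $\rho_{(\alpha(\log),\beta,\gamma)}[f]=\rho<+\infty$. Recall that for any transcendental meromorphic function $f$ and any $k\in\mathbb{N}$ one has
\begin{equation*}
m\!\left(r,\tfrac{f^{(k)}}{f}\right) = O\bigl(\log^{+} T(2r,f)+\log r\bigr)
\end{equation*}
as $r\to+\infty$ outside an exceptional set $E_5\subset[0,+\infty)$ of finite linear measure (this is the standard Gol'dberg--Grinshtein / Nevanlinna lemma on the logarithmic derivative). Since this is already phrased in terms of $T(2r,f)$, what remains is to translate the $(\alpha(\log),\beta,\gamma)$-order assumption on $f$ into a usable bound for $\log T(2r,f)$.

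First I would use the definition of $\rho_{(\alpha(\log),\beta,\gamma)}[f]$: for any $\varepsilon>0$ and all sufficiently large $r$,
\begin{equation*}
\alpha\bigl(\log^{[2]} T(r,f)\bigr) \leq \bigl(\rho+\tfrac{\varepsilon}{2}\bigr)\beta\bigl(\log\gamma(r)\bigr),
\end{equation*}
which after applying $\alpha^{-1}$ and then $\exp$ gives
\begin{equation*}
\log T(r,f) \leq \exp\!\bigl\{\alpha^{-1}\bigl((\rho+\tfrac{\varepsilon}{2})\beta(\log\gamma(r))\bigr)\bigr\}.
\end{equation*}
Replacing $r$ by $2r$ in this inequality, I would then absorb the factor $2$ using the hypotheses on $\gamma$ and $\beta$: since $\gamma\in L_3$ is subadditive with $\gamma(2r)\leq 2\gamma(r)$, we have $\log\gamma(2r)\leq \log\gamma(r)+\log 2$, and since $\beta\in L_2$ satisfies $\beta(x+O(1))=(1+o(1))\beta(x)$ as $x\to+\infty$, it follows that
\begin{equation*}
\beta\bigl(\log\gamma(2r)\bigr) \leq (1+o(1))\,\beta\bigl(\log\gamma(r)\bigr)
\end{equation*}
as $r\to+\infty$. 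A small readjustment of $\varepsilon$ therefore yields
\begin{equation*}
\log T(2r,f) \leq \exp\!\bigl\{\alpha^{-1}\bigl((\rho+\varepsilon)\beta(\log\gamma(r))\bigr)\bigr\}
\end{equation*}
for all $r$ large enough.

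Finally, plugging this bound into the Nevanlinna estimate and noting that $\log r$ is negligible compared to the exponential term (because $\alpha^{-1}((\rho+\varepsilon)\beta(\log\gamma(r)))\to+\infty$ as $r\to+\infty$, so its exponential dominates any logarithm of $r$), I obtain
\begin{equation*}
m\!\left(r,\tfrac{f^{(k)}}{f}\right) = O\!\left(\exp\!\bigl\{\alpha^{-1}\bigl((\rho+\varepsilon)\beta(\log\gamma(r))\bigr)\bigr\}\right),
\end{equation*}
for $r\notin E_5$, which is the desired conclusion. The only delicate point is the passage from $2r$ to $r$ inside the scale $\alpha^{-1}(\cdot\beta(\log\gamma(\cdot)))$; this is precisely where the structural assumptions $\gamma\in L_3$ and $\beta\in L_2$ on the admissible triple $(\alpha,\beta,\gamma)$ are used, and the remainder of the argument is routine. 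If $f$ is not transcendental (rational), then $m(r,f^{(k)}/f)=O(\log r)$ trivially, which is also absorbed by the right-hand side.
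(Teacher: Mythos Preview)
Your argument is correct and is the standard route: combine the classical lemma on the logarithmic derivative, $m(r,f^{(k)}/f)=O(\log T(r,f)+\log r)$ outside a set of finite linear measure, with the growth bound $\log T(r,f)\le\exp\{\alpha^{-1}((\rho+\varepsilon)\beta(\log\gamma(r)))\}$ coming from the definition of $(\alpha(\log),\beta,\gamma)$-order, and absorb the $\log r$ term using condition~(ii) on the triple $(\alpha,\beta,\gamma)$ (specifically $\alpha(\log^{[2]}x)=o(\beta(\log\gamma(x)))$, which gives $\log\log r\le\alpha^{-1}((\rho+\varepsilon)\beta(\log\gamma(r)))$ for large $r$). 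The paper does not supply its own proof of this lemma---it is quoted from \cite{b8}---so there is no in-text argument to compare against, but your approach is exactly what one expects the cited proof to do. One minor remark: you do not actually need the $2r$ version of the logarithmic-derivative estimate, since the version with $T(r,f)$ and an exceptional set already suffices and spares you the $2r\to r$ reduction (though that reduction is handled correctly in your write-up).
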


\begin{lemma}
\label{l3.7} (\cite{b8}) Let $A_{0}(z),$ $A_{1}(z),...,A_{k-1}(z)$ be entire
functions. Then every nontrivial solution $f$ of $\left( \ref{1.1}\right) $
satisfies%
\begin{equation*}
\rho _{(\alpha (\log ),\beta ,\gamma )}[f]\leq \max \{\rho _{(\alpha ,\beta
,\gamma )}[A_{j}]:j=0,1,...,k-1\}\text{.}
\end{equation*}
\end{lemma}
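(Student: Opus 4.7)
The plan is to combine Wiman--Valiron theory (Lemma \ref{l3.5}) with the relation between $M(r,f)$, the maximum term $\mu(r)$, and the central index $\nu_{f}(r)$ supplied by Lemma \ref{l3.4}, so as to control the growth of $f$ by the growth of the coefficients. Set $b:=\max_{0\le j\le k-1}\rho_{(\alpha,\beta,\gamma)}[A_{j}]$; we may assume $b<+\infty$, since otherwise the claim is vacuous. I would start by rewriting (\ref{1.1}) as
\begin{equation*}
-\frac{f^{(k)}}{f} \;=\; A_{k-1}\frac{f^{(k-1)}}{f}+\cdots+A_{1}\frac{f'}{f}+A_{0}
\end{equation*}
and evaluating this identity at a point $z_{r}$ on the circle $|z|=r$ with $|f(z_{r})|=M(r,f)$.

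At such $z_{r}$, Lemma \ref{l3.5} yields $f^{(j)}(z_{r})/f(z_{r})=(\nu_{f}(r)/z_{r})^{j}(1+o(1))$ for every $r$ outside an exceptional set $E_{4}\subset(1,+\infty)$ of finite logarithmic measure. Since $\nu_{f}(r)/r\to+\infty$ whenever $f$ is transcendental, dividing by $(\nu_{f}(r)/z_{r})^{k-1}$ and isolating the dominating term $(\nu_{f}(r)/r)^{k}$ on the left produces the estimate
\begin{equation*}
\nu_{f}(r) \;\le\; C\,r\,\max_{0\le j\le k-1} M(r,A_{j}),\qquad r\notin E_{4},
\end{equation*}
for some constant $C>0$. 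The definition of $\rho_{(\alpha,\beta,\gamma)}[A_{j}]$ then gives, for every $\varepsilon>0$ and all sufficiently large $r$,
\begin{equation*}
\log M(r,A_{j})\le\exp\!\bigl\{\alpha^{-1}\bigl((b+\varepsilon)\beta(\log\gamma(r))\bigr)\bigr\},
\end{equation*}
and hence an analogous doubly-exponential upper bound for $\log\nu_{f}(r)$.

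To pass from $\nu_{f}$ back to $M(r,f)$, I would apply Lemma \ref{l3.4}(ii) with $R=2r$, which yields $M(r,f)\le\mu(r)\bigl(\nu_{f}(2r)+2\bigr)$, combined with Lemma \ref{l3.4}(i) in the form $\log\mu(r)\le\nu_{f}(r)\log r+O(1)$. Together,
\begin{equation*}
\log M(r,f) \;\le\; \nu_{f}(r)\log r + O\!\bigl(\log\nu_{f}(2r)\bigr).
\end{equation*}
Taking logarithms twice and inserting the estimate for $\nu_{f}$ from the previous step produces $\log^{[3]} M(r,f)\le\log^{[2]} M(r,A_{j_{0}(r)})+o(1)$, where $j_{0}(r)$ is the index attaining the maximum. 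Applying $\alpha$, exploiting the $L_{1}$-type property $\alpha(x+O(1))\le\alpha(x)+c$, dividing by $\beta(\log\gamma(r))$, and taking $\limsup$ gives $\rho_{(\alpha(\log),\beta,\gamma)}[f]\le b+\varepsilon$; letting $\varepsilon\to 0$ closes the argument.

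The main obstacle I anticipate is the cosmetic but fiddly task of discharging the exceptional set $E_{4}$: for each large $r$ one must select a nearby $r'\in[r,2r]$ with $r'\notin E_{4}$, which is possible because $E_{4}$ has finite logarithmic measure. One then uses the monotonicity $M(r,f)\le M(r',f)$ and verifies, via the $L_{2}$ property of $\beta$ together with $\gamma(2r)\le 2\gamma(r)$, that $\beta(\log\gamma(r'))=(1+o(1))\beta(\log\gamma(r))$, so that the bound transfers to the $\limsup$ taken along the full real axis (this is exactly the trick used in the proof of Lemma \ref{l3.2}). A secondary concern is the degenerate case in which some $A_{j}$ is a polynomial, so that $\log M(r,A_{j})\asymp\log r$, but the chain of inequalities goes through unchanged and still delivers the desired bound.
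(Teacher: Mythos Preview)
The paper does not give its own proof of Lemma~\ref{l3.7}; it is quoted from \cite{b8}. However, the identical Wiman--Valiron machinery you propose is exactly what the paper deploys in the second half of the proof of Theorem~\ref{t2.3} (the part establishing $\mu_{(\alpha(\log),\beta,\gamma)}[f]\le\mu_{(\alpha,\beta,\gamma)}[A_{0}]$): rewrite \eqref{1.1}, apply Lemma~\ref{l3.5} at a maximum-modulus point, bound $\nu_{f}(r)$ by the coefficients, and pass back to $M(r,f)$ via Lemma~\ref{l3.4}. Your outline is therefore the expected one and is correct in spirit.

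One assertion is false, though. You write ``$\nu_{f}(r)/r\to+\infty$ whenever $f$ is transcendental''. This is not true: for $f(z)=e^{z}$ one has $\nu_{f}(r)\sim r$, so the ratio tends to $1$. What is true for any transcendental entire $f$ is merely $\nu_{f}(r)\to+\infty$. Fortunately the inequality $\nu_{f}(r)\le C\,r\,\max_{j}M(r,A_{j})$ survives: after substituting Lemma~\ref{l3.5} into \eqref{1.1} and taking moduli you get
\[
\Bigl(\tfrac{\nu_{f}(r)}{r}\Bigr)^{k}\bigl|1+o(1)\bigr|\ \le\ \sum_{j=0}^{k-1}|A_{j}(z_{r})|\Bigl(\tfrac{\nu_{f}(r)}{r}\Bigr)^{j}\bigl|1+o(1)\bigr|.
\]
If $\nu_{f}(r)\ge r$, divide by $(\nu_{f}(r)/r)^{k-1}$ as you intended; if $\nu_{f}(r)<r$, the desired bound is immediate since $\max_{j}M(r,A_{j})\ge 1$ for large $r$ (some $A_{j}$ is nonzero). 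Either way the conclusion stands. With this small repair, and the exceptional-set argument you already sketched, the proof goes through.
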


\begin{lemma}
\label{l3.8} (\cite{b8}) Let $f$ be an entire function with $\rho _{(\alpha
,\beta ,\gamma )}[f]=\rho \in (0,+\infty )$ and $\tau _{(\alpha ,\beta
,\gamma ),M}[f]\in (0,+\infty )$. Then for any given $\eta <\tau _{(\alpha
,\beta ,\gamma ),M}[f]$, there exists a set $E_{6}\subset \left( 1,+\infty
\right) $ of infinite logarithmic measure such that for all $r\in E_{6}$,
one has%
\begin{equation*}
\exp \left\{ \alpha (\log ^{[2]}M(r,f))\right\} >\eta \left( \exp \left\{
\beta \left( \log \gamma \left( r\right) \right) \right\} \right) ^{\rho }%
\text{.}
\end{equation*}
\end{lemma}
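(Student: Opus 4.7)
The plan is to follow the template of Lemma~\ref{l3.2}, adapted to produce a set on which the quantity is bounded from \emph{below} rather than above. By Definition~\ref{d1.3},
\[
\tau_{(\alpha,\beta,\gamma),M}[f]=\limsup_{r\to+\infty}\frac{\exp(\alpha(\log^{[2]}M(r,f)))}{\bigl(\exp(\beta(\log\gamma(r)))\bigr)^{\rho}},
\]
so I would begin by extracting a sequence $r_{n}\to+\infty$ with $(1+1/n)r_{n}<r_{n+1}$ along which this ratio converges to $\tau:=\tau_{(\alpha,\beta,\gamma),M}[f]$. Fixing an auxiliary parameter $\eta_{1}\in(\eta,\tau)$, for all $n$ sufficiently large one has
\[
\exp\bigl(\alpha(\log^{[2]}M(r_{n},f))\bigr)>\eta_{1}\bigl(\exp(\beta(\log\gamma(r_{n})))\bigr)^{\rho}.
\]

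Next I would define $E_{6}=\bigcup_{n\geq n_{0}}[r_{n},t_{n}]$, where $t_{n}>r_{n}$ is chosen so that $\beta(\log\gamma(t_{n}))\leq\beta(\log\gamma(r_{n}))+\rho^{-1}\log(\eta_{1}/\eta)$; such $t_{n}$ exists by continuity of $\beta\circ\log\circ\gamma$. For $r\in[r_{n},t_{n}]$, monotonicity of $M(\cdot,f)$ gives $\exp(\alpha(\log^{[2]}M(r,f)))\geq\exp(\alpha(\log^{[2]}M(r_{n},f)))$, while the choice of $t_{n}$ yields $(\exp(\beta(\log\gamma(r))))^{\rho}\leq(\eta_{1}/\eta)(\exp(\beta(\log\gamma(r_{n}))))^{\rho}$. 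Combining these two estimates with the inequality at $r_{n}$ delivers the desired strict inequality on all of $E_{6}$.

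The main obstacle---and the reason for the detour through $t_{n}$ rather than the naive $[r_{n},(1+1/n)r_{n}]$---is verifying that $E_{6}$ has infinite logarithmic measure, i.e.\ $\sum_{n\geq n_{0}}\log(t_{n}/r_{n})=+\infty$. Since $\beta\circ\log\circ\gamma$ is continuous, non-decreasing, and unbounded, the level sets $J_{k}:=(\beta\circ\log\circ\gamma)^{-1}([k\Delta,(k+1)\Delta))$ with $\Delta:=\rho^{-1}\log(\eta_{1}/\eta)$ partition $[R_{0},+\infty)$ into intervals of total logarithmic measure $+\infty$. I would exploit the freedom in choosing $\{r_{n}\}$ by selecting one index from each shell $J_{k}$ that meets the super-level set $\{r:G(r)>\eta_{1}\}$ of the defining ratio $G$; infinitely many $k$ qualify because $\limsup G=\tau>\eta_{1}$, and on each qualifying shell the construction covers the interval $[r_{n},a^{*}_{k+1})$ up to the right-hand endpoint of $J_{k}$. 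The step I expect to require the most care is showing that the qualifying shells together carry infinite total logarithmic measure; this is the true crux and should follow from the monotonicity structure of $\phi$, $\psi$ and the growth hypotheses on $\alpha,\beta,\gamma$.
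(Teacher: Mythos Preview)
The paper does not supply a proof of Lemma~\ref{l3.8}; it is quoted from \cite{b8}. So there is no in-paper argument to compare against, and your proposal has to stand on its own.

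Your first two paragraphs are sound. Extracting a sequence $(r_{n})$ along which the type ratio exceeds an intermediate value $\eta_{1}\in(\eta,\tau)$, and then thickening each $r_{n}$ to an interval $[r_{n},t_{n}]$ on which monotonicity of $M(\cdot,f)$ together with the level-set choice of $t_{n}$ forces the inequality with constant $\eta$, is correct. You are also right that the naive choice $t_{n}=(1+1/n)r_{n}$ from the template of Lemma~\ref{l3.2} does not transfer directly: the hypothesis $\beta\in L_{2}$ yields only
\[
\beta(\log\gamma(r))-\beta(\log\gamma(r_{n}))=o\bigl(\beta(\log\gamma(r_{n}))\bigr)
\quad\text{on }[r_{n},(1+1/n)r_{n}],
\]
and this difference may well tend to $+\infty$ (e.g.\ $\gamma(r)=r$, $\beta(x)=x^{2}$), which would wreck the type comparison even though the order comparison in Lemma~\ref{l3.2} survives.

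The genuine gap is your third paragraph. You do not prove that $E_{6}$ has infinite logarithmic measure; you assert that it ``should follow from the monotonicity structure of $\phi$, $\psi$''---symbols that are never defined---``and the growth hypotheses on $\alpha,\beta,\gamma$''. But none of the standing hypotheses bound the logarithmic length of a shell $J_{k}=(\beta\circ\log\circ\gamma)^{-1}\bigl([k\Delta,(k+1)\Delta)\bigr)$ from below, and nothing prevents each $r_{n}$ from sitting arbitrarily close to the right endpoint of its shell, so that $[r_{n},t_{n}]$ contributes negligibly. Concretely, with $\gamma(r)=r$ and $\beta(x)=x^{2}$ one computes $\log(t_{n}/r_{n})\asymp 1/\log r_{n}$, and a sparse sequence such as $r_{n}=\exp(e^{n})$ gives $\sum_{n}\log(t_{n}/r_{n})<\infty$. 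Your shell-selection idea does not rule this out: the qualifying shells need not have infinite total logarithmic measure, and you have not shown that they do. In short, you have located the real difficulty but not resolved it; the proposal is a strategy, not yet a proof.
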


\begin{lemma}
\label{l3.9} Let $f_{2}(z)$ be an entire function of lower $(\alpha \left(
\log \right) ,\beta ,\gamma )$-order with $\mu _{(\alpha \left( \log \right)
,\beta ,\gamma )}[f_{2}]=\mu >0$, and let $f_{1}(z)$ be an entire function
of $(\alpha \left( \log \right) ,\beta ,\gamma )$-order with $\rho _{(\alpha
\left( \log \right) ,\beta ,\gamma )}[f_{1}]=\rho <+\infty $. If $\rho
_{(\alpha \left( \log \right) ,\beta ,\gamma )}[f_{1}]<\mu _{(\alpha \left(
\log \right) ,\beta ,\gamma )}[f_{2}],$ then we have%
\begin{equation*}
T\left( r,f_{1}\right) =o(T\left( r,f_{2}\right) )\text{ as }r\rightarrow
+\infty \text{.}
\end{equation*}
\end{lemma}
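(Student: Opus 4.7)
The plan is to bound $T(r,f_1)$ from above and $T(r,f_2)$ from below by the double-exponentials coming from the $(\alpha(\log),\beta,\gamma)$-order and the lower $(\alpha(\log),\beta,\gamma)$-order, respectively, and then show that the resulting ratio tends to zero by invoking the hypothesis $\alpha^{-1}(kx)=o(\alpha^{-1}(x))$ for $0<k<1$.

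First, I would pick $\varepsilon>0$ small enough that $\rho+\varepsilon<\mu-\varepsilon$. By the definition of $(\alpha(\log),\beta,\gamma)$-order (via Proposition \ref{p1.1}), for every sufficiently large $r$,
\begin{equation*}
T(r,f_{1})\leq \exp^{[2]}\bigl\{\alpha^{-1}\bigl((\rho+\varepsilon)\beta(\log\gamma(r))\bigr)\bigr\}.
\end{equation*}
By the definition of lower $(\alpha(\log),\beta,\gamma)$-order, for every sufficiently large $r$,
\begin{equation*}
T(r,f_{2})\geq \exp^{[2]}\bigl\{\alpha^{-1}\bigl((\mu-\varepsilon)\beta(\log\gamma(r))\bigr)\bigr\}.
\end{equation*}
Dividing, set $A(r)=\alpha^{-1}((\rho+\varepsilon)\beta(\log\gamma(r)))$ and $B(r)=\alpha^{-1}((\mu-\varepsilon)\beta(\log\gamma(r)))$ so that
\begin{equation*}
\frac{T(r,f_{1})}{T(r,f_{2})}\leq \exp\bigl(\exp A(r)-\exp B(r)\bigr).
\end{equation*}

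The key step is to show that $\exp B(r)-\exp A(r)\to+\infty$. Taking $k=(\rho+\varepsilon)/(\mu-\varepsilon)\in(0,1)$, the condition $\alpha^{-1}(kx)=o(\alpha^{-1}(x))$ (as $x\to+\infty$), together with the fact that $\beta(\log\gamma(r))\to+\infty$ (since $\gamma$ and $\beta$ are unbounded and non-decreasing in $L$), yields
\begin{equation*}
A(r)=\alpha^{-1}\bigl(k\cdot(\mu-\varepsilon)\beta(\log\gamma(r))\bigr)=o(B(r)),\qquad r\to+\infty,
\end{equation*}
and in particular $B(r)-A(r)\to+\infty$. Consequently $\exp B(r)-\exp A(r)=\exp A(r)(\exp(B(r)-A(r))-1)\to +\infty$, so $\exp(\exp A(r)-\exp B(r))\to 0$. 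Combining with the ratio estimate gives $T(r,f_1)=o(T(r,f_2))$ as required.

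The only subtle point is the passage from $A=o(B)$ to $e^{B}-e^{A}\to\infty$, which requires $A,B\to\infty$ together with $A/B\to 0$; both are guaranteed by the preceding paragraph. The rest is routine manipulation of the definitions, and the hypothesis on $\alpha^{-1}$ is precisely what makes the double-exponential scale collapse in our favor.
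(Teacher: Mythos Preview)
Your argument is correct and follows essentially the same route as the paper: bound $T(r,f_1)$ above and $T(r,f_2)$ below via the definitions, form the ratio, and use the hypothesis $\alpha^{-1}(kx)=o(\alpha^{-1}(x))$ with $k=(\rho+\varepsilon)/(\mu-\varepsilon)$ to force the exponent $\exp A(r)-\exp B(r)\to -\infty$. The only cosmetic difference is the factorization: the paper writes $\exp A-\exp B=(\exp A/\exp B-1)\exp B$ and shows this tends to $-\infty$, whereas you write $\exp B-\exp A=\exp A\,(\exp(B-A)-1)$ and show it tends to $+\infty$; both are equivalent and rely on the same key input $A(r)=o(B(r))$.
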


\begin{proof}
By definitions of $(\alpha \left( \log \right) ,\beta ,\gamma )$-order and
lower $(\alpha \left( \log \right) ,\beta ,\gamma )$-order, for any given $%
\varepsilon $ with $0<2\varepsilon <\mu -\rho $ and sufficiently large $r$,
we have%
\begin{equation}
T(r,f_{1})\leq \exp ^{\left[ 2\right] }\left\{ \alpha ^{-1}\left( \left(
\rho +\varepsilon \right) \beta \left( \log \gamma \left( r\right) \right)
\right) \right\}  \label{3.1}
\end{equation}%
and%
\begin{equation}
T(r,f_{2})\geq \exp ^{\left[ 2\right] }\left\{ \alpha ^{-1}\left( \left( \mu
-\varepsilon \right) \beta \left( \log \gamma \left( r\right) \right)
\right) \right\} .  \label{3.2}
\end{equation}%
Now by (\ref{3.1}) and (\ref{3.2}), we get%
\begin{equation*}
\frac{T(r,f_{1})}{T(r,f)}\leq \frac{\exp ^{\left[ 2\right] }\left\{ \alpha
^{-1}\left( \left( \rho +\varepsilon \right) \beta \left( \log \gamma \left(
r\right) \right) \right) \right\} }{\exp ^{\left[ 2\right] }\left\{ \alpha
^{-1}\left( \left( \mu -\varepsilon \right) \beta \left( \log \gamma \left(
r\right) \right) \right) \right\} }
\end{equation*}%
\begin{equation*}
=\exp \left\{ \exp \left\{ \alpha ^{-1}\left( \left( \rho +\varepsilon
\right) \beta \left( \log \gamma \left( r\right) \right) \right) \right\}
-\exp \left\{ \alpha ^{-1}\left( \left( \mu -\varepsilon \right) \beta
\left( \log \gamma \left( r\right) \right) \right) \right\} \right\}
\end{equation*}%
\begin{equation*}
=\exp \left\{ \left( \frac{\exp \left\{ \alpha ^{-1}\left( \left( \rho
+\varepsilon \right) \beta \left( \log \gamma \left( r\right) \right)
\right) \right\} }{\exp \left\{ \alpha ^{-1}\left( \left( \mu -\varepsilon
\right) \beta \left( \log \gamma \left( r\right) \right) \right) \right\} }%
-1\right) \exp \left\{ \alpha ^{-1}\left( \left( \mu -\varepsilon \right)
\beta \left( \log \gamma \left( r\right) \right) \right) \right\} \right\}
\end{equation*}%
\begin{equation*}
=\exp \left\{ \left( \frac{\exp \left\{ \alpha ^{-1}\left( \frac{\rho
+\varepsilon }{\mu -\varepsilon }\left( \mu -\varepsilon \right) \beta
\left( \log \gamma \left( r\right) \right) \right) \right\} }{\exp \left\{
\alpha ^{-1}\left( \left( \mu -\varepsilon \right) \beta \left( \log \gamma
\left( r\right) \right) \right) \right\} }-1\right) \exp \left\{ \alpha
^{-1}\left( \left( \mu -\varepsilon \right) \beta \left( \log \gamma \left(
r\right) \right) \right) \right\} \right\} .
\end{equation*}%
Set%
\begin{equation*}
y=\left( \frac{\exp \left\{ \alpha ^{-1}\left( \frac{\rho +\varepsilon }{\mu
-\varepsilon }\left( \mu -\varepsilon \right) \beta \left( \log \gamma
\left( r\right) \right) \right) \right\} }{\exp \left\{ \alpha ^{-1}\left(
\left( \mu -\varepsilon \right) \beta \left( \log \gamma \left( r\right)
\right) \right) \right\} }-1\right) \exp \left\{ \alpha ^{-1}\left( \left(
\mu -\varepsilon \right) \beta \left( \log \gamma \left( r\right) \right)
\right) \right\} .
\end{equation*}%
Then by putting $\left( \mu -\varepsilon \right) \beta \left( \log \gamma
\left( r\right) \right) =x,$ $\frac{\rho +\varepsilon }{\mu -\varepsilon }=k$
$\left( 0<k<1\right) $ and making use of the condition $\alpha
^{-1}(kx)=o\left( \alpha ^{-1}(x)\right) $ $\left( 0<k<1\right) $ as $%
x\rightarrow +\infty ,$ we get
\begin{eqnarray*}
\lim_{r\rightarrow +\infty }y &=&\lim_{x\rightarrow +\infty }\left( \frac{%
\exp \left\{ \alpha ^{-1}\left( kx\right) \right\} }{\exp \left\{ \alpha
^{-1}\left( x\right) \right\} }-1\right) \exp \left\{ \alpha ^{-1}\left(
x\right) \right\} \\
&=&\lim_{x\rightarrow +\infty }\left( \frac{\exp \left\{ o\left( \alpha
^{-1}(x)\right) \right\} }{\exp \left\{ \alpha ^{-1}\left( x\right) \right\}
}-1\right) \exp \left\{ \alpha ^{-1}\left( x\right) \right\} \\
&=&\lim_{x\rightarrow +\infty }\left( \exp \left\{ \left( o\left( 1\right)
-1\right) \alpha ^{-1}\left( x\right) \right\} -1\right) \exp \left\{ \alpha
^{-1}\left( x\right) \right\} =-\infty ,
\end{eqnarray*}%
this implies%
\begin{equation*}
\lim_{r\rightarrow +\infty }\exp y=0.
\end{equation*}%
Therefore yielding%
\begin{equation*}
\underset{r\rightarrow +\infty }{\lim }\frac{T(r,f_{1})}{T(r,f_{2})}=0\text{,%
}
\end{equation*}%
that is $T\left( r,f_{1}\right) =o(T\left( r,f_{2}\right) )$ as $%
r\rightarrow +\infty $.
\end{proof}

\begin{lemma}
\label{l3.10} Let $F(z)\not\equiv 0$, $A_{j}(z)$ $(j=0,...,k-1)$ be
meromorphic functions, and let $f$ be a meromorphic solution of $\left( \ref%
{1.2}\right) $ satisfying
\begin{equation*}
\max \{\rho _{(\alpha \left( \log \right) ,\beta ,\gamma )}[A_{j}],\rho
_{(\alpha \left( \log \right) ,\beta ,\gamma )}[F]:j=0,1,...,k-1\}<\mu
_{(\alpha (\log ),\beta ,\gamma )}[f].
\end{equation*}%
Then we have%
\begin{equation*}
\underline{\overline{\lambda }}_{(\alpha (\log ),\beta ,\gamma )}[f]=%
\underline{\lambda }_{(\alpha (\log ),\beta ,\gamma )}[f]=\mu _{(\alpha
(\log ),\beta ,\gamma )}[f]\text{.}
\end{equation*}
\end{lemma}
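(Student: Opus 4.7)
The plan is to divide equation $(\ref{1.2})$ by $f$, solve for $1/f$, and then use the First Main Theorem to convert an upper bound on $m(r,1/f)$ into a lower bound on $N(r,1/f)$. Explicitly, $(\ref{1.2})$ gives
\[
\frac{1}{f} \,=\, \frac{1}{F}\left(\frac{f^{(k)}}{f} + A_{k-1}\frac{f^{(k-1)}}{f} + \cdots + A_{1}\frac{f'}{f} + A_{0}\right),
\]
so applying $m(r,\cdot)$ and invoking $T(r,f) = T(r,1/f) + O(1)$ yields
\[
T(r,f) \,\le\, T(r,F) + \sum_{j=0}^{k-1} T(r,A_j) + \sum_{j=1}^{k} m\!\left(r,\frac{f^{(j)}}{f}\right) + N(r,1/f) + O(1).
\]
Set $\mu := \mu_{(\alpha(\log),\beta,\gamma)}[f]$ (so $\mu > 0$ by hypothesis) and $\rho := \rho_{(\alpha(\log),\beta,\gamma)}[f]$.

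First I would show that every term on the right-hand side except $N(r,1/f)$ is $o(T(r,f))$. The argument of Lemma \ref{l3.9} carries over verbatim from entire to meromorphic functions, so $T(r,A_j) = o(T(r,f))$ and $T(r,F) = o(T(r,f))$ follow from the hypothesis $\max\{\rho_{(\alpha(\log),\beta,\gamma)}[A_j],\rho_{(\alpha(\log),\beta,\gamma)}[F]\} < \mu$. For the logarithmic derivative terms, Lemma \ref{l3.6} gives
\[
\sum_{j=1}^{k} m\!\left(r, \frac{f^{(j)}}{f}\right) \,=\, O\!\left( \exp\{\alpha^{-1}((\rho+\varepsilon)\beta(\log\gamma(r)))\}\right)
\]
outside an exceptional set $E_5 \subset [0,+\infty)$ of finite linear measure, whereas the hypothesis $\mu > 0$ ensures $T(r,f) \ge \exp^{[2]}\{\alpha^{-1}((\mu-\varepsilon)\beta(\log\gamma(r)))\}$ eventually, so the single-exponential upper bound is dominated by this double-exponential lower bound. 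Combining, $N(r,1/f) \ge (1-o(1))\,T(r,f)$ for $r \notin E_5$.

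The next step upgrades $N(r,1/f)$ to $\overline{N}(r,1/f)$. If $z_0$ is a zero of $f$ of multiplicity $n$ at which every $A_j$ and $F$ is holomorphic, then $f^{(j)}$ vanishes to order $n-j$ at $z_0$ for $j \le n$, so the left-hand side of $(\ref{1.2})$ vanishes to order at least $n-k$; hence $F$ vanishes to order at least $n-k$ at $z_0$, giving $n \le k + \mathrm{ord}_{z_0}(F)$. Summing, and absorbing the zeros of $f$ that coincide with poles of some $A_j$ or $F$ (these contribute at most $O(\sum N(r, A_j) + N(r,F)) = o(T(r,f))$), produces
\[
N(r,1/f) \,\le\, k\,\overline{N}(r,1/f) + N(r,1/F) + o(T(r,f)),
\]
and since $N(r,1/F) \le T(r,F) = o(T(r,f))$, we deduce $\overline{N}(r,1/f) \ge \tfrac{1-o(1)}{k}T(r,f)$ for $r \notin E_5$.

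The main obstacle is converting these pointwise bounds (valid outside $E_5$) into a statement on $\liminf_{r\to\infty}$, since the definitions of $\underline{\lambda}$ and $\underline{\overline{\lambda}}$ run over all $r$. I would exploit the monotonicity of $\overline{N}$: for every large $r$, pick $r' \in [r/2,r]\setminus E_5$ (possible because $m(E_5)<\infty$ while $[r/2,r]$ has linear measure $r/2\to\infty$), apply the bound at $r'$, and use the subadditivity $\gamma \in L_3$ (so $\gamma(r') \ge \gamma(r)/2$) together with $\beta \in L_2$ (so $\beta(\log\gamma(r')) \ge (1-o(1))\beta(\log\gamma(r))$) to eliminate the shift from $r'$ back to $r$ inside $\beta(\log\gamma(\cdot))$. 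Continuity of $\alpha$ and the feeding of the resulting inequality into $\alpha(\log^{[2]}\cdot)/\beta(\log\gamma(r))$ then yield
\[
\liminf_{r\to\infty}\frac{\alpha(\log^{[2]}\overline{N}(r,1/f))}{\beta(\log\gamma(r))} \,\ge\, \mu,
\]
and the analogous bound for $\underline{\lambda}_{(\alpha(\log),\beta,\gamma)}[f]$ (with $N$ in place of $\overline{N}$). The reverse inequalities $\underline{\overline{\lambda}}_{(\alpha(\log),\beta,\gamma)}[f] \le \underline{\lambda}_{(\alpha(\log),\beta,\gamma)}[f] \le \mu$ are immediate from $\overline{N}(r,1/f) \le N(r,1/f) \le T(r,1/f) = T(r,f) + O(1)$, which finishes the proof.
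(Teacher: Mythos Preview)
Your argument follows essentially the same route as the paper's proof: rewrite $(\ref{1.2})$ as an expression for $1/f$, estimate $m(r,1/f)$ via logarithmic derivatives, use the multiplicity argument to bound $N(r,1/f)$ by $k\,\overline{N}(r,1/f)$ plus lower-order terms, and apply Lemma~\ref{l3.9} (whose proof indeed works verbatim for meromorphic functions) to absorb the coefficient terms. Your handling of the exceptional set $E_5$ via monotonicity and the subadditivity of $\gamma$ is more explicit than the paper's, which simply asserts that the inequality outside $E_5$ already yields the bound on the $\liminf$.

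There is, however, one genuine gap. You invoke Lemma~\ref{l3.6} to control $\sum_j m(r,f^{(j)}/f)$, but that lemma requires $\rho_{(\alpha(\log),\beta,\gamma)}[f]<+\infty$, and Lemma~\ref{l3.10} makes no such hypothesis on $f$; only the \emph{lower} $(\alpha(\log),\beta,\gamma)$-order of $f$ is assumed to exceed the orders of the coefficients. The paper sidesteps this entirely by using the classical logarithmic derivative lemma, obtaining
\[
\sum_{j=1}^{k} m\!\left(r,\frac{f^{(j)}}{f}\right)=O\bigl(\log T(r,f)+\log r\bigr)=o(T(r,f)),\qquad r\notin E_5,
\]
where the last equality holds because $\mu_{(\alpha(\log),\beta,\gamma)}[f]>0$ forces $f$ to be transcendental. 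Replacing your appeal to Lemma~\ref{l3.6} with this standard estimate removes the extraneous finiteness assumption; the rest of your proof then goes through unchanged.
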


\begin{proof}
By (\ref{1.2}), we get that%
\begin{equation}
\frac{1}{f}=\frac{1}{F}\left( \frac{f^{(k)}}{f}+A_{k-1}(z)\frac{f^{(k-1)}}{f}%
+\cdots +A_{1}(z)\frac{f^{\prime }}{f}+A_{0}\right) \text{.}  \label{3.3}
\end{equation}%
Now, by (\ref{1.2}) it is easy to see that if $f$ has a zero at $z_{0}$ of
order $a$ $(a>k)$, and if $A_{0},...,A_{k-1}$ are analytic at $z_{0}$, then $%
F(z)$ must have a zero at $z_{0}$ of order $a-k$, hence%
\begin{equation}
n\left( r,\frac{1}{f}\right) \leq k\overline{n}\left( r,\frac{1}{f}\right)
+n\left( r,\frac{1}{F}\right) +\sum_{j=0}^{k-1}n(r,A_{j})  \label{3.4}
\end{equation}%
and%
\begin{equation}
N\left( r,\frac{1}{f}\right) \leq k\overline{N}\left( r,\frac{1}{f}\right)
+N\left( r,\frac{1}{F}\right) +\sum_{j=0}^{k-1}N(r,A_{j})\text{.}
\label{3.5}
\end{equation}%
By the lemma on logarithmic derivative (\cite{1}, p. 34) and (\ref{3.3}), we
have%
\begin{equation}
m\left( r,\frac{1}{f}\right) \leq m\left( r,\frac{1}{F}\right)
+\sum_{j=0}^{k-1}m(r,A_{j})+O(\log T(r,f)+\log r)\text{ \ \ }(r\notin E_{5})%
\text{,}  \label{3.6}
\end{equation}%
where $E_{5}$ is a set of $r$ of finite linear measure. By (\ref{3.5}) and (%
\ref{3.6}), we obtain that%
\begin{equation*}
T(r,f)=T\left( r,\frac{1}{f}\right) +O(1)\leq k\overline{N}\left( r,\frac{1}{%
f}\right)
\end{equation*}%
\begin{equation}
+T(r,F)+\sum_{j=0}^{k-1}T(r,A_{j})+O(\log (rT(r,f)))\text{ }(r\notin E_{5})%
\text{.}  \label{3.7}
\end{equation}%
Since $\max \{\rho _{(\alpha (\log ),\beta ,\gamma )}[A_{j}],\rho _{(\alpha
(\log ),\beta ,\gamma )}[F]:j=0,1,...,k-1\}<\mu _{(\alpha (\log ),\beta
,\gamma )}[f]$, then by Lemma \ref{l3.9}
\begin{equation}
T\left( r,F\right) =o(T\left( r,f\right) ),\text{ }T\left( r,A_{j}\right)
=o(T\left( r,f\right) )\text{ }(j=0,...,k-1)\text{ as }r\rightarrow +\infty
\text{.}  \label{3.8}
\end{equation}%
Since $f$ is transcendental, then we have%
\begin{equation}
O(\log (rT(r,f)))=o(T(r,f))\text{ as }r\rightarrow +\infty \text{.}
\label{3.9}
\end{equation}%
Therefore, by substituting (\ref{3.8}) and (\ref{3.9}) into (\ref{3.7}), for
all $|z|=r\notin E_{5}$, we get that%
\begin{equation*}
T(r,f)\leq O\left( \overline{N}\left( r,\frac{1}{f}\right) \right) \text{.}
\end{equation*}%
Hence from above we have%
\begin{equation*}
\mu _{(\alpha (\log ),\beta ,\gamma )}[f]\leq \underline{\overline{\lambda }}%
_{(\alpha (\log ),\beta ,\gamma )}[f]\text{.}
\end{equation*}%
Since $\underline{\overline{\lambda }}_{(\alpha (\log ),\beta ,\gamma
)}[f]\leq \underline{\lambda }_{(\alpha (\log ),\beta ,\gamma )}[f]\leq \mu
_{(\alpha (\log ),\beta ,\gamma )}[f],$ then%
\begin{equation*}
\underline{\overline{\lambda }}_{(\alpha (\log ),\beta ,\gamma )}[f]=%
\underline{\lambda }_{(\alpha (\log ),\beta ,\gamma )}[f]=\mu _{(\alpha
(\log ),\beta ,\gamma )}[f]\text{.}
\end{equation*}
\end{proof}

\begin{lemma}
\label{l3.11} (\cite{b7}) Let $F(z)\not\equiv 0$, $A_{j}(z)$ $(j=0,...,k-1)$
be entire functions. Also let $f$ be a solution of $\left( \ref{1.2}\right) $
satisfying $\max \{\rho _{(\alpha (\log ),\beta ,\gamma )}[A_{j}],\rho
_{(\alpha (\log ),\beta ,\gamma )}[F]:j=0,1,...,k-1\}<\rho _{(\alpha (\log
),\beta ,\gamma )}[f]$. Then we have%
\begin{equation*}
\overline{\lambda }_{(\alpha (\log ),\beta ,\gamma )}[f]=\lambda _{(\alpha
(\log ),\beta ,\gamma )}[f]=\rho _{(\alpha (\log ),\beta ,\gamma )}[f]\text{.%
}
\end{equation*}
\end{lemma}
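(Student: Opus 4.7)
The plan is to mirror the structure of Lemma \ref{l3.10}, exchanging the role of lower order $\mu$ for the order $\rho$ and using the fact that the dominant exponent on the right of the key inequality becomes the $\rho$-growth of $f$ rather than its $\mu$-growth. First I would rewrite (\ref{1.2}) in the form
\begin{equation*}
\frac{1}{f}=\frac{1}{F}\left( \frac{f^{(k)}}{f}+A_{k-1}\frac{f^{(k-1)}}{f}+\cdots +A_{1}\frac{f^{\prime }}{f}+A_{0}\right) ,
\end{equation*}
and, exactly as in the proof of Lemma \ref{l3.10}, argue that a zero of $f$ of order $a>k$ forces a zero of $F$ of order $a-k$ (since all $A_j$ are entire), which gives
\begin{equation*}
N\!\left(r,\tfrac{1}{f}\right)\leq k\,\overline{N}\!\left(r,\tfrac{1}{f}\right)+N\!\left(r,\tfrac{1}{F}\right)+\sum_{j=0}^{k-1}N(r,A_{j}).
\end{equation*}
Combining this with the standard lemma on the logarithmic derivative applied to the displayed identity yields, outside a set $E_{5}$ of finite linear measure,
\begin{equation*}
T(r,f)\leq k\,\overline{N}\!\left(r,\tfrac{1}{f}\right)+T(r,F)+\sum_{j=0}^{k-1}T(r,A_{j})+O(\log(rT(r,f))).
\end{equation*}

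Next I would handle the growth comparison, which is the point where the argument deviates from Lemma \ref{l3.10}. Setting $\sigma:=\max\{\rho_{(\alpha(\log),\beta,\gamma)}[A_{j}],\rho_{(\alpha(\log),\beta,\gamma)}[F]\}<\rho:=\rho_{(\alpha(\log),\beta,\gamma)}[f]$, pick $\varepsilon>0$ with $\sigma+\varepsilon<\rho-\varepsilon$. Choose a sequence $r_{n}\to+\infty$ achieving the lim\,sup in the definition of $\rho_{(\alpha(\log),\beta,\gamma)}[f]$, namely $\alpha(\log^{[2]}T(r_{n},f))/\beta(\log\gamma(r_{n}))\to\rho$. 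Since $E_{5}$ has finite linear measure we can, by slightly perturbing, arrange that $r_{n}\notin E_{5}$ while preserving the growth (using $\beta(r+O(1))=(1+o(1))\beta(r)$ as in Lemma \ref{l3.2}). Along this sequence the upper estimate on $T(r,F)$ and $T(r,A_{j})$ given by $\sigma+\varepsilon$ and the lower estimate on $T(r_n,f)$ given by $\rho-\varepsilon$ force
\begin{equation*}
T(r_{n},F)=o(T(r_{n},f)),\qquad T(r_{n},A_{j})=o(T(r_{n},f)),\qquad O(\log(r_{n}T(r_{n},f)))=o(T(r_{n},f)),
\end{equation*}
by exactly the calculation carried out in the proof of Lemma \ref{l3.9} (the key ingredient being $\alpha^{-1}(kx)=o(\alpha^{-1}(x))$ for $0<k<1$).

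Substituting these estimates into the displayed inequality along $\{r_{n}\}$ produces $T(r_{n},f)\leq O(\overline{N}(r_{n},1/f))$, which in the $(\alpha(\log),\beta,\gamma)$-scale reads $\rho_{(\alpha(\log),\beta,\gamma)}[f]\leq\overline{\lambda}_{(\alpha(\log),\beta,\gamma)}[f]$. Combined with the trivial chain $\overline{\lambda}_{(\alpha(\log),\beta,\gamma)}[f]\leq\lambda_{(\alpha(\log),\beta,\gamma)}[f]\leq\rho_{(\alpha(\log),\beta,\gamma)}[f]$, this yields the claimed triple equality. The main obstacle is the growth comparison in the second paragraph: unlike in Lemma \ref{l3.10}, we only know $\rho[A_{j}]<\rho[f]$ rather than $\rho[A_{j}]<\mu[f]$, so the $o$-estimates are not valid for all large $r$ but only along a sequence realizing the lim\,sup; carefully selecting that sequence outside $E_{5}$ while maintaining the desired growth lower bound is the delicate technical point.
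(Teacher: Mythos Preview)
The paper does not supply its own proof of this lemma; it is quoted from \cite{b7}. Your proposal is correct and follows the expected route: it parallels the argument for Lemma~\ref{l3.10}, replacing the appeal to Lemma~\ref{l3.9} (which needs $\rho[A_j]<\mu[f]$) by a sequence argument along points realizing the $\limsup$ defining $\rho_{(\alpha(\log),\beta,\gamma)}[f]$. The one point you flagged as delicate---perturbing the sequence $\{r_n\}$ off the finite-measure exceptional set $E_5$ while retaining the growth lower bound---is indeed routine: since $E_5$ has finite linear measure, each $r_n$ can be replaced by some $r_n'\in[r_n,r_n+C]$ with $r_n'\notin E_5$, and then $T(r_n',f)\ge T(r_n,f)$ together with $\gamma(r_n')\le\gamma(r_n)+\gamma(C)$ and $\beta\in L_2$ preserves the limit $\alpha(\log^{[2]}T(r_n',f))/\beta(\log\gamma(r_n'))\to\rho$. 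The hypothesis forces $\rho_{(\alpha(\log),\beta,\gamma)}[f]>0$, hence $f$ is transcendental and the term $O(\log(rT(r,f)))=o(T(r,f))$ is justified.
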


\begin{lemma}
\label{l3.12} Let $f$ be a transcendental entire function. Then $\rho
_{(\alpha (\log ),\beta ,\gamma )}[f]$ $=\rho _{(\alpha (\log ),\beta
,\gamma )}[f^{\left( k\right) }],$ $k\in
\mathbb{N}
$.
\end{lemma}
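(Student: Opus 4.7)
The plan is to reduce to the case $k=1$ by induction (since $f$ transcendental entire forces $f^{(k)}$ transcendental entire—otherwise $f$ would itself be a polynomial), and then to establish $\rho_{(\alpha(\log),\beta,\gamma)}[f]=\rho_{(\alpha(\log),\beta,\gamma)}[f']$ by a pair of two-sided estimates between $M(r,f)$ and $M(r,f')$. Throughout I would work with the $M$-based formulation furnished by Proposition \ref{p1.1}, namely
\begin{equation*}
\rho_{(\alpha(\log),\beta,\gamma)}[f]=\limsup_{r\to+\infty}\frac{\alpha(\log^{[3]}M(r,f))}{\beta(\log\gamma(r))},
\end{equation*}
and similarly for $f'$.

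For the inequality $\rho_{(\alpha(\log),\beta,\gamma)}[f']\leq\rho_{(\alpha(\log),\beta,\gamma)}[f]$, I would apply the Cauchy estimate $M(r,f')\leq M(2r,f)/r$. Since $f$ is transcendental one has $\log M(2r,f)/\log r\to+\infty$, so $\log M(r,f')=(1+o(1))\log M(2r,f)$, and pushing two additional logarithms through yields $\log^{[3]}M(r,f')\leq\log^{[3]}M(2r,f)+o(1)$. Using $\alpha\in L_{1}$, the perturbation is absorbed: $\alpha(\log^{[3]}M(r,f'))\leq\alpha(\log^{[3]}M(2r,f))+C$ for a constant $C$. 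For the denominator I would exploit $\gamma\in L_{3}$ (so $\gamma(2r)\leq 2\gamma(r)$, equivalently $\gamma(s/2)\geq\gamma(s)/2$ after setting $s=2r$) together with $\beta\in L_{2}$ to conclude $\beta(\log\gamma(s/2))=(1+o(1))\beta(\log\gamma(s))$. Substituting $s=2r$ in the $\limsup$ then delivers the claimed upper bound.

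For the reverse inequality $\rho_{(\alpha(\log),\beta,\gamma)}[f]\leq\rho_{(\alpha(\log),\beta,\gamma)}[f']$, I would integrate $f'$ along the segment from $0$ to $z$, giving $M(r,f)\leq|f(0)|+rM(r,f')$ and hence $\log M(r,f)\leq\log r+\log M(r,f')+O(1)$. Transcendence of $f'$ now absorbs the $\log r$, producing $\log^{[3]}M(r,f)\leq\log^{[3]}M(r,f')+o(1)$; once more $\alpha\in L_{1}$ swallows the $o(1)$ into an additive constant, and no scaling manipulation of the denominator is required. Dividing by $\beta(\log\gamma(r))$ and taking the $\limsup$ closes this side.

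The main technical obstacle will be the careful accounting of the three admissibility conditions as the small terms are pushed through $\log^{[2]}$, $\log^{[3]}$, $\alpha^{-1}$, and $\beta$: specifically, using $L_{1}$ to replace $\alpha(x+o(1))$ by $\alpha(x)+O(1)$, combining $L_{3}$ for $\gamma$ with $L_{2}$ for $\beta$ to prove $\beta(\log\gamma(2r))\sim\beta(\log\gamma(r))$, and arranging the substitution $s=2r$ so that the numerator is recognized as the defining ratio of $\rho_{(\alpha(\log),\beta,\gamma)}[f]$. Once these scale comparisons are verified, both inequalities fall out and the induction on $k$ yields the equality for every $k\in\mathbb{N}$.
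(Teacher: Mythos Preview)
Your proposal is correct and follows the same overall structure as the paper: reduce to $k=1$ and then induct. The paper's own proof of Lemma~\ref{l3.12} is a one-liner that simply cites Lemma~4.4 of \cite{b7} for the base case $\rho_{(\alpha(\log),\beta,\gamma)}[f]=\rho_{(\alpha(\log),\beta,\gamma)}[f']$ and then appeals to induction; you instead supply a self-contained proof of that base case via the Cauchy estimate $M(r,f')\leq M(2r,f)/r$ in one direction and integration $M(r,f)\leq |f(0)|+rM(r,f')$ in the other, with the $L_{1}$, $L_{2}$, $L_{3}$ conditions absorbing the bounded perturbations and the doubling $r\mapsto 2r$. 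So the route is not genuinely different, only more explicit: what you gain is independence from the external reference~\cite{b7}, at the cost of carrying out the (routine) verifications that $\beta(\log\gamma(2r))\sim\beta(\log\gamma(r))$ and that the additive $O(1)$ after applying $\alpha$ vanishes upon division by $\beta(\log\gamma(r))\to+\infty$. One minor point: when you invoke $\alpha\in L_{1}$ to handle $\alpha(x+o(1))$, the subadditivity condition is stated only for $a,b\geq R_{0}$, so strictly speaking you should bound the $o(1)$ term by a fixed constant and use monotonicity of $\alpha$ before applying $L_{1}$; this is harmless but worth writing out.
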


\begin{proof}
By Lemma 4.4 in (\cite{b7}), we have $\rho _{(\alpha (\log ),\beta ,\gamma
)}[f]$ $=\rho _{(\alpha (\log ),\beta ,\gamma )}[f^{\prime }],$ so by using
mathematical induction, we easily obtain the result.
\end{proof}

\begin{lemma}
\label{l3.13} (\cite{b7}) \textit{Let }$f$\textit{\ be a meromorphic
function. If }$\rho _{(\alpha ,\beta ,\gamma )}[f]=\rho <+\infty $\textit{,
then }$\rho _{(\alpha \left( \log \right) ,\beta ,\gamma )}[f]=0.$
\end{lemma}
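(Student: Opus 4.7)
The plan is to translate the hypothesis $\rho_{(\alpha,\beta,\gamma)}[f] = \rho < +\infty$ into an upper bound on $T(r,f)$ and then squeeze an extra logarithm into that bound using the structural condition $\alpha(\log x) = o(\alpha(x))$ as $x \to +\infty$ that $\alpha$ is assumed to satisfy.

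\textbf{Step 1: Upper bound on $T(r,f)$.} From the definition of $\rho_{(\alpha,\beta,\gamma)}[f]$, for any $\varepsilon > 0$ and all sufficiently large $r$,
\begin{equation*}
\alpha(\log T(r,f)) \leq (\rho + \varepsilon)\,\beta(\log \gamma(r)).
\end{equation*}
Since $\alpha$ is non-decreasing with $\alpha^{-1}$ well-defined on its range, this gives
\begin{equation*}
\log T(r,f) \leq \alpha^{-1}\bigl((\rho + \varepsilon)\,\beta(\log \gamma(r))\bigr),
\end{equation*}
and taking one more logarithm,
\begin{equation*}
\log^{[2]} T(r,f) \leq \log \alpha^{-1}\bigl((\rho + \varepsilon)\,\beta(\log \gamma(r))\bigr).
\end{equation*}

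\textbf{Step 2: Applying $\alpha$ and using $\alpha(\log x) = o(\alpha(x))$.} Apply $\alpha$ to both sides and set $x_r = \alpha^{-1}\bigl((\rho + \varepsilon)\,\beta(\log \gamma(r))\bigr)$, so that $\alpha(x_r) = (\rho + \varepsilon)\,\beta(\log \gamma(r))$. Then
\begin{equation*}
\alpha\bigl(\log^{[2]} T(r,f)\bigr) \leq \alpha(\log x_r).
\end{equation*}
Since $\beta(\log\gamma(r)) \to +\infty$ as $r \to +\infty$ (because $\beta \in L$ is unbounded and $\gamma(r) \to +\infty$), we have $x_r \to +\infty$. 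The hypothesis $\alpha(\log x) = o(\alpha(x))$ as $x \to +\infty$ therefore yields
\begin{equation*}
\alpha(\log x_r) = o(\alpha(x_r)) = o\bigl((\rho + \varepsilon)\,\beta(\log \gamma(r))\bigr) = o\bigl(\beta(\log \gamma(r))\bigr).
\end{equation*}

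\textbf{Step 3: Conclusion.} Combining the last two displays and dividing by $\beta(\log \gamma(r))$,
\begin{equation*}
\frac{\alpha\bigl(\log^{[2]} T(r,f)\bigr)}{\beta(\log \gamma(r))} \longrightarrow 0 \quad (r \to +\infty),
\end{equation*}
so $\rho_{(\alpha(\log),\beta,\gamma)}[f] = \limsup \frac{\alpha(\log^{[2]} T(r,f))}{\beta(\log \gamma(r))} = 0$, as required.

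The argument is really a one-line calculation once one sees how to plug in $x_r = \alpha^{-1}((\rho+\varepsilon)\beta(\log\gamma(r)))$; the only mild subtlety is being careful that $x_r \to +\infty$ so that the asymptotic relation $\alpha(\log x) = o(\alpha(x))$ can legitimately be invoked. No Wiman--Valiron machinery, lemma on logarithmic derivatives, or properties of $\gamma \in L_3$ are needed here — the entire content of the lemma is the structural assumption $\alpha(\log x) = o(\alpha(x))$ imposed in the standing hypotheses on the triple $(\alpha,\beta,\gamma)$.
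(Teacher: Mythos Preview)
Your argument is correct. The present paper does not supply a proof of this lemma at all---it is simply quoted from reference \cite{b7}---so there is no in-paper proof to compare against; your direct use of the standing hypothesis $\alpha(\log x)=o(\alpha(x))$ via the substitution $x_r=\alpha^{-1}\bigl((\rho+\varepsilon)\beta(\log\gamma(r))\bigr)$ is exactly the natural route and is self-contained.
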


\begin{lemma}
\label{l3.14} (\cite{h}) Let $A_{j}(z)$ $(j=0,...,k-1)$ be entire
coefficients in $\left( \ref{1.1}\right) $, and at least one of them is
transcendental. If $A_{s}(z)$ $\left( 0\leq s\leq k-1\right) $ is the first
one (according to the sequence of $A_{0}(z),...,A_{k-1}(z))$ satisfying
\begin{equation*}
\underset{r\rightarrow +\infty }{\lim \inf }\frac{\overset{k-1}{%
\sum\limits_{j=s+1}}m\left( r,A_{j}\right) }{m\left( r,A_{s}\right) }<1,%
\text{ }r\notin E_{6},
\end{equation*}%
where $E_{6}$ is a set of $r$ of finite linear measure. Then $\left( \ref%
{1.1}\right) $ possesses at most $s$ linearly independent entire solutions
satisfying
\begin{equation*}
\underset{r\rightarrow +\infty }{\lim \sup }\frac{\log T\left( r,f\right) }{%
m\left( r,A_{s}\right) }=0,\text{ }r\notin E_{6}.
\end{equation*}
\end{lemma}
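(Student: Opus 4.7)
The plan is to argue by contradiction. Suppose (\ref{1.1}) admits $s+1$ linearly independent entire solutions $f_1,\ldots,f_{s+1}$, each satisfying
$$\limsup_{r\to+\infty,\;r\notin E_6}\frac{\log T(r,f_i)}{m(r,A_s)}=0.$$
Dividing (\ref{1.1}) through by $f_i$ gives the $(s+1)\times(s+1)$ linear system
$$\sum_{\ell=0}^{s}A_{\ell}\,\frac{f_i^{(\ell)}}{f_i}=-\frac{f_i^{(k)}}{f_i}-\sum_{\ell=s+1}^{k-1}A_{\ell}\,\frac{f_i^{(\ell)}}{f_i}\qquad(i=1,\ldots,s+1),$$
in the unknowns $A_0,\ldots,A_s$. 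Its coefficient determinant is the normalized Wronskian $W^{*}=W(f_1,\ldots,f_{s+1})/(f_1\cdots f_{s+1})$, which is not identically zero by linear independence of $f_1,\ldots,f_{s+1}$.

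Applying Cramer's rule then expresses $A_s$ as a rational combination of the logarithmic derivatives $f_i^{(\ell)}/f_i$ (with $0\le \ell\le s$ or $\ell=k$), weighted by the tail coefficients $A_{s+1},\ldots,A_{k-1}$. Taking proximity functions of this identity and invoking the standard logarithmic derivative lemma (\cite{1}, p.~34), which yields $m(r,f_i^{(\ell)}/f_i)=O(\log T(r,f_i)+\log r)$ outside a set $E'$ of finite linear measure, one arrives at an inequality of the form
$$m(r,A_s)\le\sum_{\ell=s+1}^{k-1}m(r,A_{\ell})+O\!\Big(\sum_{i=1}^{s+1}\log T(r,f_i)+\log r\Big),\quad r\notin E_6\cup E'.$$

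Finally, the growth assumption on the $f_i$ guarantees that $\log T(r,f_i)=o(m(r,A_s))$ along admissible $r$, so the error term above is $o(m(r,A_s))$. By the $\liminf$ hypothesis one may extract a sequence $r_n\to+\infty$, $r_n\notin E_6\cup E'$, along which $\sum_{\ell>s}m(r_n,A_{\ell})\le c\,m(r_n,A_s)$ for some fixed $c<1$. Substituting yields $(1-c+o(1))\,m(r_n,A_s)\le 0$, which is incompatible with $A_s$ having unbounded proximity; the latter is forced by the assumption that at least one $A_j$ is transcendental combined with the minimality of the index $s$ in the $\liminf$ condition, giving the desired contradiction. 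The main obstacle is the first step: one must carefully control the proximity of $1/W^{*}$, which requires bounding $T(r,W^{*})$ via its logarithmic-derivative entries and absorbing any contribution from zeros of the $f_i$ into the $O(\log T(r,f_i))$ remainder through the first fundamental theorem.
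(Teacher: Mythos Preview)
The paper does not supply its own proof of Lemma~\ref{l3.14}; the result is quoted from He--Xiao~\cite{h}. Your overall strategy---argue by contradiction, apply Cramer's rule to $s+1$ hypothetical slow-growing solutions, and estimate proximity functions via the logarithmic derivative lemma---is the standard one. For $s=0$ (the only case actually invoked in this paper, in the proof of Theorem~\ref{t2.6}) no Wronskian appears and your sketch goes through.

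For $s\ge1$, however, there is a genuine gap at precisely the point you flag as ``the main obstacle.'' Your proposed fix---absorbing the contribution from zeros of the $f_i$ into the $O(\log T(r,f_i))$ remainder ``through the first fundamental theorem''---does not work. The first fundamental theorem yields only $N(r,1/f_i)\le T(r,f_i)+O(1)$; since the poles of $W^*=W/(f_1\cdots f_{s+1})$ lie at the zeros of the $f_i$, one gets $N(r,W^*)=O\bigl(\sum_i T(r,f_i)\bigr)$, hence
\[
m\bigl(r,1/W^*\bigr)\le T(r,W^*)+O(1)=m(r,W^*)+N(r,W^*)+O(1)=O\Bigl(\sum_i T(r,f_i)\Bigr),
\]
not $O\bigl(\sum_i\log T(r,f_i)\bigr)$. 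Under the hypothesis $\log T(r,f_i)=o(m(r,A_s))$ the quantity $\sum_i T(r,f_i)$ can grow like $\exp\{o(m(r,A_s))\}$, which is in general \emph{not} $o(m(r,A_s))$, so the final inequality $(1-c+o(1))\,m(r_n,A_s)\le0$ is never reached.

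The classical argument avoids $1/W^*$ entirely. Writing your Cramer expansion as $A_s=-W_k/W-\sum_{j>s}A_j\,(W_j/W)$, one observes that the $f_i$ also satisfy the monic order-$(s{+}1)$ equation $w^{(s+1)}+B_sw^{(s)}+\cdots+B_0w=0$ they determine; using this relation to express each $f_i^{(j)}$ with $j>s$ as a combination of $f_i,\ldots,f_i^{(s)}$ shows that every $W_j/W$ is a differential \emph{polynomial} in $B_0,\ldots,B_s$. An iterated reduction of order (equivalently, the P\'olya factorisation of $L_{s+1}$) then represents each $B_\ell$ as a differential polynomial in the logarithmic derivatives of the successive Wronskian quotients $h_m=W(f_1,\ldots,f_m)/W(f_1,\ldots,f_{m-1})$. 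Since $T(r,h_m)=O\bigl(\sum_i T(r,f_i)\bigr)$, the logarithmic derivative lemma gives $m(r,h_m'/h_m)=O(\log T(r,f_i)+\log r)$, whence $m(r,W_j/W)=O(\log T(r,f_i)+\log r)$ directly---no reciprocal of a determinant ever enters. This algebraic representation is the missing ingredient; without it the argument is incomplete for $s\ge1$.
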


\section{\textbf{Proof of the Main Results}}

\textbf{Proof of Theorem \ref{t2.3}}. Suppose that $f$\ $\left( \not\equiv
0\right) $ is a solution of equation $\left( \ref{1.1}\right) $. By Theorem %
\ref{t2.1}, we know that every solution $f$\ $\left( \not\equiv 0\right) $
of $\left( \ref{1.1}\right) $ satisfies \textit{\ }$\rho _{(\alpha (\log
),\beta ,\gamma )}[f]=\rho _{(\alpha ,\beta ,\gamma )}[A_{0}]$. So, we only
need to prove that every solution $f$\ $\left( \not\equiv 0\right) $ of $%
\left( \ref{1.1}\right) $ satisfies \textit{\ }$\mu _{(\alpha (\log ),\beta
,\gamma )}[f]=\mu _{(\alpha ,\beta ,\gamma )}[A_{0}].$ First, we prove that $%
\mu _{1}=\mu _{(\alpha (\log ),\beta ,\gamma )}[f]\geq \mu _{(\alpha ,\beta
,\gamma )}[A_{0}]=\mu _{0}$. Suppose the contrary. Set $\max \{\rho
_{(\alpha ,\beta ,\gamma )}[A_{j}]:j=1,...,k-1,\mu _{(\alpha (\log ),\beta
,\gamma )}[f]\}=\rho <\mu _{(\alpha ,\beta ,\gamma )}[A_{0}]=\mu _{0}.$ From
$\left( \ref{1.1}\right) ,$ we can write%
\begin{equation}
\left\vert A_{0}\left( z\right) \right\vert \leq \left\vert \frac{f^{\left(
k\right) }}{f}\right\vert +\left\vert A_{k-1}\left( z\right) \right\vert
\left\vert \frac{f^{\left( k-1\right) }}{f}\right\vert +\cdots +\left\vert
A_{1}\left( z\right) \right\vert \left\vert \frac{f^{\prime }}{f}\right\vert
.  \label{4.1}
\end{equation}%
For any given $\varepsilon \left( 0<2\varepsilon <\mu _{0}-\rho \right) $
and for sufficiently large $r$, we have%
\begin{equation}
\left\vert A_{0}\left( z\right) \right\vert >\exp ^{[2]}\left\{ \alpha
^{-1}\left( \left( \mu _{0}-\varepsilon \right) \beta \left( \log \gamma
\left( r\right) \right) \right) \right\}  \label{4.2}
\end{equation}%
and
\begin{equation}
\left\vert A_{j}\left( z\right) \right\vert \leq \exp ^{[2]}\left\{ \alpha
^{-1}\left( \left( \rho +\frac{\varepsilon }{2}\right) \beta \left( \log
\gamma \left( r\right) \right) \right) \right\} ,\text{ }j\in \left\{
1,2,...,k-1\right\} .  \label{4.3}
\end{equation}%
By Lemma \ref{l3.1}, there exist a constant $B>0$ and a set $E_{1}\subset
\left( 1,+\infty \right) $ having finite logarithmic measure such that for
all $z$ satisfying $\left\vert z\right\vert =r\notin \left[ 0,1\right] \cup
E_{1}$, we have%
\begin{equation}
\left\vert \frac{f^{\left( j\right) }(z)}{f(z)}\right\vert \leq B\left[
T(2r,f)\right] ^{k+1}\text{ }\left( j=1,2,...,k\right) .  \label{4.4}
\end{equation}%
It follows by Lemma \ref{l3.2} and $\left( \ref{4.4}\right) $, that for
sufficiently large $\left\vert z\right\vert =r\in E_{2}\backslash \left(
E_{1}\cup \left[ 0,1\right] \right) $%
\begin{equation*}
\left\vert \frac{f^{\left( j\right) }(z)}{f(z)}\right\vert \leq B\left[
T(2r,f)\right] ^{k+1}
\end{equation*}%
\begin{equation}
\leq B\left[ \exp ^{[2]}\left\{ \alpha ^{-1}\left( \left( \mu _{1}+\frac{%
\varepsilon }{2}\right) \beta \left( \log \gamma \left( r\right) \right)
\right) \right\} \right] ^{k+1}\text{ }\left( j=1,2,...,k\right) ,
\label{4.5}
\end{equation}%
where $E_{2}$ is a set of infinite logarithmic measure. Hence, by
substituting $\left( \ref{4.2}\right) -\left( \ref{4.5}\right) $ into $%
\left( \ref{4.1}\right) $, for the above $\varepsilon \left( 0<2\varepsilon
<\mu _{0}-\rho \right) $, we obtain for sufficiently large $\left\vert
z\right\vert =r\in E_{2}\backslash \left( E_{1}\cup \left[ 0,1\right]
\right) $%
\begin{equation*}
\exp ^{[2]}\left\{ \alpha ^{-1}\left( \left( \mu _{0}-\varepsilon \right)
\beta \left( \log \gamma \left( r\right) \right) \right) \right\}
\end{equation*}%
\begin{equation*}
\leq Bk\exp ^{[2]}\left\{ \alpha ^{-1}\left( \left( \rho +\frac{\varepsilon
}{2}\right) \beta \left( \log \gamma \left( r\right) \right) \right)
\right\} \left[ T(2r,f)\right] ^{k+1}
\end{equation*}%
\begin{equation*}
\leq Bk\exp ^{[2]}\left\{ \alpha ^{-1}\left( \left( \rho +\frac{\varepsilon
}{2}\right) \beta \left( \log \gamma \left( r\right) \right) \right) \right\}
\end{equation*}%
\begin{equation*}
\times \left[ \exp ^{[2]}\left\{ \alpha ^{-1}\left( \left( \mu _{1}+\frac{%
\varepsilon }{2}\right) \beta \left( \log \gamma \left( r\right) \right)
\right) \right\} \right] ^{k+1}
\end{equation*}%
\begin{equation}
\leq \exp ^{[2]}\left\{ \alpha ^{-1}\left( \left( \rho +\varepsilon \right)
\beta \left( \log \gamma \left( r\right) \right) \right) \right\} .
\label{4.6}
\end{equation}%
Since $E_{2}\backslash \left( E_{1}\cup \left[ 0,1\right] \right) $ is a set
of infinite logarithmic measure, then there exists a sequence of points $%
\left\vert z_{n}\right\vert =r_{n}\in E_{2}\backslash \left( E_{1}\cup \left[
0,1\right] \right) $ tending to $+\infty .$ It follows by $\left( \ref{4.6}%
\right) $ that%
\begin{equation}
\exp ^{[2]}\left\{ \alpha ^{-1}\left( \left( \mu _{0}-\varepsilon \right)
\beta \left( \log \gamma \left( r_{n}\right) \right) \right) \right\} \leq
\exp ^{[2]}\left\{ \alpha ^{-1}\left( \left( \rho +\varepsilon \right) \beta
\left( \log \gamma \left( r_{n}\right) \right) \right) \right\}  \label{4.7}
\end{equation}%
holds for all $z_{n}$ satisfying $\left\vert z_{n}\right\vert =r_{n}\in
E_{2}\backslash \left( E_{1}\cup \left[ 0,1\right] \right) $ as $\left\vert
z_{n}\right\vert \rightarrow +\infty .$ By arbitrariness of $\varepsilon >0$
and the monotony of the function $\alpha ^{-1}$, from $\left( \ref{4.7}%
\right) $ we obtain that $\rho \geq $ $\mu _{(\alpha ,\beta ,\gamma
)}[A_{0}]=\mu _{0}$. This contradiction proves the inequality $\mu _{(\alpha
(\log ),\beta ,\gamma )}[f]\geq \mu _{(\alpha ,\beta ,\gamma )}[A_{0}]$.%
\newline
\qquad Now, we prove $\mu _{(\alpha (\log ),\beta ,\gamma )}[f]\leq \mu
_{(\alpha ,\beta ,\gamma )}[A_{0}]=\mu _{0}.$ By $\left( \ref{1.1}\right) ,$
we have%
\begin{equation}
\left\vert \frac{f^{\left( k\right) }}{f}\right\vert \leq \left\vert
A_{k-1}\left( z\right) \right\vert \left\vert \frac{f^{\left( k-1\right) }}{f%
}\right\vert +\cdots +\left\vert A_{1}\left( z\right) \right\vert \left\vert
\frac{f^{\prime }}{f}\right\vert +\left\vert A_{0}\left( z\right)
\right\vert .  \label{4.8}
\end{equation}%
By Lemma \ref{l3.5}, there exists a set $E_{4}\subset \left( 1,+\infty
\right) $\ of finite logarithmic measure such that the estimation%
\begin{equation}
\frac{f^{\left( j\right) }(z)}{f(z)}=\left( \frac{\nu _{f}\left( r\right) }{z%
}\right) ^{j}\left( 1+o\left( 1\right) \right) \text{ }\left(
j=1,...,k\right)  \label{4.9}
\end{equation}%
holds for all $z$ satisfying $\left\vert z\right\vert =r\notin E_{4},$ $%
r\rightarrow +\infty $\ and $\left\vert f\left( z\right) \right\vert
=M\left( r,f\right) $. By Lemma \ref{l3.3}, for any given $\varepsilon >0,$\
there exists a set $E_{3}\subset \left( 1,+\infty \right) $ that has
infinite logarithmic measure, such that%
\begin{equation}
\left\vert A_{0}\left( z\right) \right\vert \leq \exp ^{[2]}\left\{ \alpha
^{-1}\left( \left( \mu _{0}+\frac{\varepsilon }{2}\right) \beta \left( \log
\gamma \left( r\right) \right) \right) \right\}  \label{4.10}
\end{equation}%
and for sufficiently large $r$%
\begin{equation*}
\left\vert A_{j}\left( z\right) \right\vert \leq \exp ^{[2]}\left\{ \alpha
^{-1}\left( \left( \rho +\frac{\varepsilon }{2}\right) \beta \left( \log
\gamma \left( r\right) \right) \right) \right\}
\end{equation*}%
\begin{equation}
\leq \exp ^{[2]}\left\{ \alpha ^{-1}\left( \left( \mu _{0}+\frac{\varepsilon
}{2}\right) \beta \left( \log \gamma \left( r\right) \right) \right)
\right\} \text{ }\left( j=1,...,k-1\right) .  \label{4.11}
\end{equation}%
Substituting $\left( \ref{4.9}\right) ,$ $\left( \ref{4.10}\right) $ and $%
\left( \ref{4.11}\right) $ into $\left( \ref{4.8}\right) ,$ we obtain%
\begin{equation*}
\nu _{f}\left( r\right) \leq kr^{k}\left\vert 1+o\left( 1\right) \right\vert
\exp ^{[2]}\left\{ \alpha ^{-1}\left( \left( \mu _{0}+\frac{\varepsilon }{2}%
\right) \beta \left( \log \gamma \left( r\right) \right) \right) \right\}
\end{equation*}%
\begin{equation}
\leq \exp ^{[2]}\left\{ \alpha ^{-1}\left( \left( \mu _{0}+\varepsilon
\right) \beta \left( \log \gamma \left( r\right) \right) \right) \right\}
\label{4.12}
\end{equation}%
for all $z$ satisfying $\left\vert z\right\vert =r\in E_{3}\backslash E_{4},$
$r\rightarrow +\infty $ and $\left\vert f\left( z\right) \right\vert
=M\left( r,f\right) .$ By Lemma \ref{l3.4}, from $\left( \ref{4.12}\right) $
we obtain for each $\varepsilon >0$%
\begin{equation*}
T\left( r,f\right) \leq \log M\left( r,f\right) <\log \left[ \mu \left(
r\right) \left( \nu _{f}\left( 2r\right) +2\right) \right]
\end{equation*}%
\begin{equation*}
=\log \left[ \left\vert a_{\nu _{f}\left( r\right) }\right\vert r^{\nu
_{f}\left( r\right) }\left( \nu _{f}\left( 2r\right) +2\right) \right] <\nu
_{f}\left( r\right) \log r+\log \left( 2\nu _{f}\left( 2r\right) \right)
+\log \left\vert a_{\nu _{f}\left( r\right) }\right\vert
\end{equation*}%
\begin{equation*}
\leq \exp ^{[2]}\left\{ \alpha ^{-1}\left( \left( \mu _{0}+\varepsilon
\right) \beta \left( \log \gamma \left( r\right) \right) \right) \right\}
\log r
\end{equation*}%
\begin{equation*}
+\log \left( 2\exp ^{[2]}\left\{ \alpha ^{-1}\left( \left( \mu
_{0}+\varepsilon \right) \beta \left( \log \gamma \left( 2r\right) \right)
\right) \right\} \right) +\log \left\vert a_{\nu _{f}\left( r\right)
}\right\vert
\end{equation*}%
\begin{equation*}
\leq \exp ^{[2]}\left\{ \alpha ^{-1}\left( \left( \mu _{0}+2\varepsilon
\right) \beta \left( \log \gamma \left( r\right) \right) \right) \right\}
+\log 2
\end{equation*}%
\begin{equation*}
+\exp \left\{ \alpha ^{-1}\left( \left( \mu _{0}+\varepsilon \right) \beta
\left( \log \gamma \left( 2r\right) \right) \right) \right\} +\log
\left\vert a_{\nu _{f}\left( r\right) }\right\vert
\end{equation*}%
\begin{equation*}
\leq \exp ^{[2]}\left\{ \alpha ^{-1}\left( \left( \mu _{0}+3\varepsilon
\right) \beta \left( \log \gamma \left( r\right) \right) \right) \right\} .
\end{equation*}%
Hence,%
\begin{equation*}
\frac{\alpha (\log ^{[2]}T(r,f))}{\beta \left( \log \gamma \left( r\right)
\right) }\leq \mu _{0}+3\varepsilon .
\end{equation*}%
It follows
\begin{equation*}
\mu _{(\alpha (\log ),\beta ,\gamma )}[f]=\underset{r\longrightarrow +\infty
}{\lim \inf }\frac{\alpha (\log ^{[2]}T(r,f))}{\beta \left( \log \gamma
\left( r\right) \right) }\leq \mu _{0}+3\varepsilon .
\end{equation*}%
Since $\varepsilon >0$ is arbitrary, then we obtain $\mu _{(\alpha (\log
),\beta ,\gamma )}[f]\leq \mu _{0}.$ Hence every solution $f\not\equiv 0$ of
equation\textit{\ }$\left( \ref{1.1}\right) \ $satisfies $\mu _{(\alpha
,\beta ,\gamma )}[A_{0}]=\mu _{(\alpha (\log ),\beta ,\gamma )}[f]\leq \rho
_{(\alpha (\log ),\beta ,\gamma )}[f]=\rho _{(\alpha ,\beta ,\gamma
)}[A_{0}].$\newline
\qquad Secondly, we prove that $\underline{\overline{\lambda }}_{(\alpha
(\log ),\beta ,\gamma )}[f-g]=\mu _{(\alpha (\log ),\beta ,\gamma )}[f]$ and
\begin{equation*}
\overline{\lambda }_{(\alpha (\log ),\beta ,\gamma )}[f-g]=\rho _{(\alpha
(\log ),\beta ,\gamma )}[f].
\end{equation*}%
Set $h=f-g.$ Since
\begin{equation*}
\rho _{(\alpha (\log ),\beta ,\gamma )}\left[ g\right] <\mu _{(\alpha ,\beta
,\gamma )}[A_{0}]=\mu _{(\alpha (\log ),\beta ,\gamma )}[f]\leq \rho
_{(\alpha (\log ),\beta ,\gamma )}[f],
\end{equation*}%
it follows from Proposition \ref{p1.2} and Proposition \ref{p1.4} that $\rho
_{(\alpha (\log ),\beta ,\gamma )}\left[ h\right] =\rho _{(\alpha (\log
),\beta ,\gamma )}[f]=\rho _{(\alpha ,\beta ,\gamma )}[A_{0}]$ and $\mu
_{(\alpha (\log ),\beta ,\gamma )}[h]=\mu _{(\alpha (\log ),\beta ,\gamma
)}[f]=\mu _{(\alpha ,\beta ,\gamma )}[A_{0}].$ By substituting $f=g+h,$ $%
f^{\prime }=g^{\prime }+h^{\prime },\ldots ,f^{\left( k\right)
}=g^{(k)}+h^{(k)}$ into $\left( 1.1\right) ,$ we obtain
\begin{equation}
h^{(k)}+A_{k-1}(z)h^{(k-1)}+\cdots
+A_{0}(z)h=-(g^{(k)}+A_{k-1}(z)g^{(k-1)}+\cdots +A_{0}(z)g).  \label{4.13}
\end{equation}%
If $g^{(k)}+A_{k-1}(z)g^{(k-1)}+\cdots +A_{0}(z)g=G\equiv 0,$ then by the
first part of the proof of Theorem \ref{t2.3} we have $\rho _{(\alpha (\log
),\beta ,\gamma )}[g]\geq \mu _{(\alpha ,\beta ,\gamma )}[A_{0}]$ which
contradicts the assumption $\rho _{(\alpha (\log ),\beta ,\gamma )}[g]<\mu
_{(\alpha ,\beta ,\gamma )}[A_{0}].$ Hence $G\not\equiv 0.$ By Proposition %
\ref{p1.2}, Lemma \ref{l3.12} and Lemma \ref{l3.13}$,$ we get%
\begin{equation}
\rho _{(\alpha (\log ),\beta ,\gamma )}\left[ G\right] \leq \max \{\rho
_{(\alpha (\log ),\beta ,\gamma )}\left[ g\right] ,\rho _{(\alpha (\log
),\beta ,\gamma )}(A_{j})\text{ }\left( j=0,1,...,k-1\right) \}  \notag
\end{equation}%
\begin{equation*}
=\rho _{(\alpha (\log ),\beta ,\gamma )}\left[ g\right] <\mu _{(\alpha
,\beta ,\gamma )}[A_{0}]=\mu _{(\alpha (\log ),\beta ,\gamma )}[f]=\mu
_{(\alpha (\log ),\beta ,\gamma )}[h]
\end{equation*}%
\begin{equation*}
\leq \rho _{(\alpha (\log ),\beta ,\gamma )}\left[ h\right] =\rho _{(\alpha
(\log ),\beta ,\gamma )}[f]=\rho _{(\alpha ,\beta ,\gamma )}[A_{0}].
\end{equation*}%
Then, it follows from Lemma \ref{l3.10}, Lemma \ref{l3.11} and $\left( \ref%
{4.13}\right) $ that $\overline{\lambda }_{(\alpha (\log ),\beta ,\gamma )}%
\left[ h\right] =\lambda _{(\alpha (\log ),\beta ,\gamma )}\left[ h\right]
=\rho _{(\alpha (\log ),\beta ,\gamma )}(h)=\rho _{(\alpha (\log ),\beta
,\gamma )}[f]$ and
\begin{equation*}
\underline{\overline{\lambda }}_{(\alpha (\log ),\beta ,\gamma )}[h]=%
\underline{\lambda }_{(\alpha (\log ),\beta ,\gamma )}[h]=\mu _{(\alpha
(\log ),\beta ,\gamma )}[h]=\mu _{(\alpha (\log ),\beta ,\gamma )}[f].
\end{equation*}%
Therefore, $\underline{\overline{\lambda }}_{(\alpha (\log ),\beta ,\gamma
)}[f-g]=\mu _{(\alpha (\log ),\beta ,\gamma )}[f]$ and
\begin{equation*}
\overline{\lambda }_{(\alpha (\log ),\beta ,\gamma )}\left[ f-g\right] =\rho
_{(\alpha (\log ),\beta ,\gamma )}[f]
\end{equation*}%
which completes the proof of Theorem \ref{t2.3}.

\textbf{Proof of Theorem \ref{t2.4}}. Suppose that $f$\ $\left( \not\equiv
0\right) $ is a solution of equation $\left( \ref{1.1}\right) $. Then by
Theorem \ref{t2.2}, we obtain $\rho _{(\alpha (\log ),\beta ,\gamma
)}[f]=\rho _{(\alpha ,\beta ,\gamma )}[A_{0}]$\textit{. }Now, we prove that $%
\mu _{1}=\mu _{(\alpha (\log ),\beta ,\gamma )}[f]\geq \mu _{(\alpha ,\beta
,\gamma )}[A_{0}]=\mu _{0}$. Suppose the contrary $\mu _{1}=\mu _{(\alpha
(\log ),\beta ,\gamma )}[f]<\mu _{(\alpha ,\beta ,\gamma )}[A_{0}]=\mu _{0}$%
. We set $b=\max \{\rho _{(\alpha ,\beta ,\gamma )}[A_{j}]:\rho _{(\alpha
,\beta ,\gamma )}[A_{j}]<\mu _{(\alpha ,\beta ,\gamma )}[A_{0}]\}.$ If $\rho
_{(\alpha ,\beta ,\gamma )}[A_{j}]<\mu _{(\alpha ,\beta ,\gamma )}[A_{0}],$
then for any given $\varepsilon $ with $0<3\varepsilon <\min \left\{ \mu
_{0}-b,\tau -\tau _{1}\right\} $ and for sufficiently large $r$, we have%
\begin{equation*}
\left\vert A_{j}\left( z\right) \right\vert \leq \exp ^{[2]}\left\{ \alpha
^{-1}\left( \left( b+\varepsilon \right) \beta \left( \log \gamma \left(
r\right) \right) \right) \right\}
\end{equation*}%
\begin{equation}
\leq \exp ^{[2]}\left\{ \alpha ^{-1}\left( \left( \mu _{(\alpha ,\beta
,\gamma )}[A_{0}]-2\varepsilon \right) \beta \left( \log \gamma \left(
r\right) \right) \right) \right\} .  \label{4.14}
\end{equation}%
If $\rho _{(\alpha ,\beta ,\gamma )}[A_{j}]=\mu _{(\alpha ,\beta ,\gamma
)}[A_{0}]$, $\tau _{(\alpha ,\beta ,\gamma ),M}\left[ A_{j}\right] \leq \tau
_{1}<\underline{\tau }_{(\alpha ,\beta ,\gamma ),M}[A_{0}]=\tau ,$ then for
sufficiently large $r$, we have%
\begin{equation}
\left\vert A_{j}\left( z\right) \right\vert \leq \exp ^{[2]}\left\{ \alpha
^{-1}\left( \log \left( \left( \tau _{1}+\varepsilon \right) \left( \exp
\left\{ \beta \left( \log \gamma \left( r\right) \right) \right\} \right)
^{\mu _{0}}\right) \right) \right\}  \label{4.15}
\end{equation}%
and%
\begin{equation}
\left\vert A_{0}\left( z\right) \right\vert >\exp ^{[2]}\left\{ \alpha
^{-1}\left( \log \left( \left( \tau -\varepsilon \right) \left( \exp \left\{
\beta \left( \log \gamma \left( r\right) \right) \right\} \right) ^{\mu
_{0}}\right) \right) \right\} .  \label{4.16}
\end{equation}%
By Lemma \ref{l3.1} and Lemma \ref{l3.2}, for any given $\varepsilon $ with $%
0<\varepsilon <\mu _{0}-\mu _{1}$ and sufficiently large $\left\vert
z\right\vert =r\in E_{2}\backslash \left( E_{1}\cup \left[ 0,1\right]
\right) $%
\begin{equation*}
\left\vert \frac{f^{\left( j\right) }(z)}{f(z)}\right\vert \leq B\left[
T(2r,f)\right] ^{k+1}
\end{equation*}%
\begin{equation}
\leq B\left[ \exp ^{[2]}\left\{ \alpha ^{-1}\left( \left( \mu
_{1}+\varepsilon \right) \beta \left( \log \gamma \left( r\right) \right)
\right) \right\} \right] ^{k+1}\text{ }\left( j=1,2,...,k\right) ,
\label{4.17}
\end{equation}%
where $E_{2}$ is a set of infinite logarithmic measure. Hence, by
substituting $\left( \ref{4.14}\right) -\left( \ref{4.17}\right) $ into $%
\left( \ref{4.1}\right) $, for the above $\varepsilon $ with $\
0<\varepsilon <\min \left\{ \frac{\mu _{0}-b}{3},\frac{\tau -\tau _{1}}{3}%
,\mu _{0}-\mu _{1}\right\} ,$ we obtain for sufficiently large $\left\vert
z\right\vert =r\in E_{2}\backslash \left( E_{1}\cup \left[ 0,1\right]
\right) $%
\begin{equation*}
\exp ^{[2]}\left\{ \alpha ^{-1}\left( \log \left( \left( \tau -\varepsilon
\right) \left( \exp \left\{ \beta \left( \log \gamma \left( r\right) \right)
\right\} \right) ^{\mu _{0}}\right) \right) \right\}
\end{equation*}%
\begin{equation*}
\leq Bk\exp ^{[2]}\left\{ \alpha ^{-1}\left( \log \left( \left( \tau
_{1}+\varepsilon \right) \left( \exp \left\{ \beta \left( \log \gamma \left(
r\right) \right) \right\} \right) ^{\mu _{0}}\right) \right) \right\} \left[
T(2r,f)\right] ^{k+1}
\end{equation*}%
\begin{equation*}
\leq Bk\exp ^{[2]}\left\{ \alpha ^{-1}\left( \log \left( \left( \tau
_{1}+\varepsilon \right) \left( \exp \left\{ \beta \left( \log \gamma \left(
r\right) \right) \right\} \right) ^{\mu _{0}}\right) \right) \right\}
\end{equation*}%
\begin{equation*}
\times \left[ \exp ^{[2]}\left\{ \alpha ^{-1}\left( \left( \mu
_{1}+\varepsilon \right) \beta \left( \log \gamma \left( r\right) \right)
\right) \right\} \right] ^{k+1}
\end{equation*}%
\begin{equation}
\leq \exp ^{[2]}\left\{ \alpha ^{-1}\left( \log \left( \left( \tau
_{1}+2\varepsilon \right) \left( \exp \left\{ \beta \left( \log \gamma
\left( r\right) \right) \right\} \right) ^{\mu _{0}}\right) \right) \right\}
.  \label{4.18}
\end{equation}%
Since $E_{2}\backslash \left( E_{1}\cup \left[ 0,1\right] \right) $ is a set
of infinite logarithmic measure, then there exists a sequence of points $%
\left\vert z_{n}\right\vert =r_{n}\in E_{2}\backslash \left( E_{1}\cup \left[
0,1\right] \right) $ tending to $+\infty .$ It follows by $\left( \ref{4.18}%
\right) $ that%
\begin{equation*}
\exp ^{[2]}\left\{ \alpha ^{-1}\left( \log \left( \left( \tau -\varepsilon
\right) \left( \exp \left\{ \beta \left( \log \gamma \left( r_{n}\right)
\right) \right\} \right) ^{\mu _{0}}\right) \right) \right\}
\end{equation*}%
\begin{equation*}
\leq \exp ^{[2]}\left\{ \alpha ^{-1}\left( \log \left( \left( \tau
_{1}+2\varepsilon \right) \left( \exp \left\{ \beta \left( \log \gamma
\left( r_{n}\right) \right) \right\} \right) ^{\mu _{0}}\right) \right)
\right\}
\end{equation*}%
holds for all $z_{n}$ satisfying $\left\vert z_{n}\right\vert =r_{n}\in
E_{2}\backslash \left( E_{1}\cup \left[ 0,1\right] \right) $ as $\left\vert
z_{n}\right\vert \rightarrow +\infty .$ By arbitrariness of $\varepsilon >0$
and the monotonicity of the function $\alpha ^{-1}$, we obtain that $\tau
_{1}\geq \tau $. This contradiction proves the inequality $\mu _{(\alpha
(\log ),\beta ,\gamma )}[f]\geq \mu _{(\alpha ,\beta ,\gamma )}[A_{0}]$.%
\newline
\qquad Now, we prove $\mu _{(\alpha (\log ),\beta ,\gamma )}[f]\leq \mu
_{(\alpha ,\beta ,\gamma )}[A_{0}].$ By using similar arguments as in the
proofs of Theorem \ref{t2.3}, we obtain $\mu _{(\alpha (\log ),\beta ,\gamma
)}[f]\leq \mu _{(\alpha ,\beta ,\gamma )}[A_{0}].$ Hence, every solution $%
f\not\equiv 0$ of equation\textit{\ }$\left( \ref{1.1}\right) \ $satisfies
\begin{equation*}
\mu _{(\alpha ,\beta ,\gamma )}[A_{0}]=\mu _{(\alpha (\log ),\beta ,\gamma
)}[f]\leq \rho _{(\alpha (\log ),\beta ,\gamma )}[f]=\rho _{(\alpha ,\beta
,\gamma )}[A_{0}].
\end{equation*}%
The second part of the proof of Theorem \ref{t2.3} completes the proof of
Theorem \ref{t2.4}.

\textbf{Proof of Theorem \ref{t2.5}}. Suppose that $f$\ $\left( \not\equiv
0\right) $ is a solution of equation $\left( \ref{1.1}\right) $. We divide
the proof into two parts: (i) $\rho _{(\alpha (\log ),\beta ,\gamma
)}[f]=\rho _{(\alpha ,\beta ,\gamma )}[A_{0}]$, (ii) $\mu _{(\alpha (\log
),\beta ,\gamma )}[f]=\mu _{(\alpha ,\beta ,\gamma )}[A_{0}]$.\newline
(i) First, we prove that $\rho _{1}=\rho _{(\alpha (\log ),\beta ,\gamma
)}[f]\geq \rho _{(\alpha ,\beta ,\gamma )}[A_{0}]=\rho _{0}$. \ Suppose the
contrary $\rho _{1}=\rho _{(\alpha (\log ),\beta ,\gamma )}[f]<\rho
_{(\alpha ,\beta ,\gamma )}[A_{0}]=\rho _{0}.$ From $\left( \ref{1.1}\right)
,$ we can write%
\begin{equation}
A_{0}\left( z\right) =-\left( \frac{f^{\left( k\right) }}{f}+A_{k-1}\left(
z\right) \frac{f^{\left( k-1\right) }}{f}+\cdots +A_{1}\left( z\right) \frac{%
f^{\prime }}{f}\right) .  \label{4.19}
\end{equation}%
By Lemma \ref{l3.6} and $\left( \ref{4.19}\right) $, we have%
\begin{equation*}
m\left( r,A_{0}\right) \leq \underset{j=1}{\overset{k-1}{\sum }}m\left(
r,A_{j}\right) +\underset{j=1}{\overset{k}{\sum }}m\left( r,\frac{f^{\left(
j\right) }}{f}\right) +\log k
\end{equation*}%
\begin{equation}
\leq \underset{j=1}{\overset{k-1}{\sum }}m\left( r,A_{j}\right) +O\left(
\exp \left\{ \alpha ^{-1}\left( \left( \rho _{1}+\frac{\varepsilon }{2}%
\right) \beta \left( \log \gamma \left( r\right) \right) \right) \right\}
\right)   \label{4.20}
\end{equation}%
holds possibly outside of an exceptional set $E_{5}\subset \left( 0,+\infty
\right) $ with finite linear measure. Suppose that%
\begin{equation*}
\underset{r\rightarrow +\infty }{\lim \sup }\frac{\overset{k-1}{%
\sum\limits_{j=1}}m\left( r,A_{j}\right) }{m\left( r,A_{0}\right) }=\sigma
<\kappa <1.
\end{equation*}%
Then for sufficiently large $r$, we have%
\begin{equation}
\overset{k-1}{\sum\limits_{j=1}}m\left( r,A_{j}\right) <\kappa m\left(
r,A_{0}\right) .  \label{4.21}
\end{equation}%
By $\left( \ref{4.20}\right) $ and $\left( \ref{4.21}\right) $, we have%
\begin{equation*}
\left( 1-\kappa \right) m\left( r,A_{0}\right) \leq O\left( \exp \left\{
\alpha ^{-1}\left( \left( \rho _{1}+\frac{\varepsilon }{2}\right) \beta
\left( \log \gamma \left( r\right) \right) \right) \right\} \right) ,\text{ }%
r\notin E_{5}.
\end{equation*}%
It follows that
\begin{equation}
T\left( r,A_{0}\right) =m\left( r,A_{0}\right) \leq \exp \left\{ \alpha
^{-1}\left( \left( \rho _{1}+\varepsilon \right) \beta \left( \log \gamma
\left( r\right) \right) \right) \right\} ,\text{ }r\notin E_{5}.
\label{4.22}
\end{equation}%
Hence%
\begin{equation*}
\frac{\alpha \left( \log T\left( r,A_{0}\right) \right) }{\beta \left( \log
\gamma \left( r\right) \right) }\leq \rho _{1}+\varepsilon
\end{equation*}%
and
\begin{equation*}
\rho _{(\alpha ,\beta ,\gamma )}[A_{0}]=\underset{r\longrightarrow +\infty }{%
\lim \sup }\frac{\alpha \left( \log T\left( r,A_{0}\right) \right) }{\beta
\left( \log \gamma \left( r\right) \right) }\leq \rho _{1}+\varepsilon .
\end{equation*}%
Since $\varepsilon >0$ is arbitrary, then we obtain $\rho _{(\alpha ,\beta
,\gamma )}[A_{0}]\leq \rho _{1}.$ This contradiction proves the inequality $%
\rho _{(\alpha (\log ),\beta ,\gamma )}[f]\geq \rho _{(\alpha ,\beta ,\gamma
)}[A_{0}]$. On the other hand, by Lemma \ref{l3.7}, we have
\begin{equation}
\rho _{(\alpha (\log ),\beta ,\gamma )}[f]\leq \max \{\rho _{(\alpha ,\beta
,\gamma )}[A_{j}]:j=0,1,...,k-1\}=\rho _{(\alpha ,\beta ,\gamma )}[A_{0}].
\label{4.23}
\end{equation}%
Hence every solution $f\not\equiv 0$ of equation\textit{\ }$\left( \ref{1.1}%
\right) \ $satisfies $\rho _{(\alpha (\log ),\beta ,\gamma )}[f]=\rho
_{(\alpha ,\beta ,\gamma )}[A_{0}].$\newline
(ii) By using similar arguments as in the proofs of Theorem \ref{t2.3}, we
obtain $\mu _{(\alpha (\log ),\beta ,\gamma )}[f]=\mu _{(\alpha ,\beta
,\gamma )}[A_{0}].$ Hence, every solution $f\not\equiv 0$ of equation\textit{%
\ }$\left( \ref{1.1}\right) \ $satisfies
\begin{equation*}
\mu _{(\alpha ,\beta ,\gamma )}[A_{0}]=\mu _{(\alpha (\log ),\beta ,\gamma
)}[f]\leq \rho _{(\alpha (\log ),\beta ,\gamma )}[f]=\rho _{(\alpha ,\beta
,\gamma )}[A_{0}].
\end{equation*}%
The second part of the proof of Theorem \ref{t2.3} completes the proof of
Theorem \ref{t2.5}.

\textbf{Proof of Theorem \ref{t2.6}}. By Lemma \ref{l3.14}, we obtain that
every linearly independent solution of $\left( \ref{1.1}\right) $ satisfies $%
\underset{r\rightarrow +\infty }{\lim \sup }\frac{\log T\left( r,f\right) }{%
m\left( r,A_{0}\right) }>0,$ $r\notin E$. So, every solution $f$\ $\left(
\not\equiv 0\right) $ of $\left( \ref{1.1}\right) $ satisfies $\underset{%
r\rightarrow +\infty }{\lim \sup }\frac{\log T\left( r,f\right) }{m\left(
r,A_{0}\right) }>0,$ $r\notin E$. Hence, there exist $\delta >0$ and a
sequence $\left\{ r_{n}\right\} _{n=1}^{+\infty }$ tending to $\infty $ such
that for sufficiently large $r_{n}\notin E$ and for every solution $f$\ $%
\left( \not\equiv 0\right) $ of $\left( \ref{1.1}\right) $, we have%
\begin{equation}
\log T\left( r_{n},f\right) >\delta m\left( r_{n},A_{0}\right) .
\label{4.24}
\end{equation}%
Since $\mu _{(\alpha ,\beta ,\gamma )}[A_{0}]=\rho _{(\alpha ,\beta ,\gamma
)}[A_{0}],$ then by $\left( \ref{4.24}\right) $, for any given $\varepsilon
>0$ and sufficiently large $r_{n}\notin E,$ we get%
\begin{equation*}
\log T\left( r_{n},f\right) >\delta \exp \left\{ \alpha ^{-1}\left( \left(
\mu _{(\alpha ,\beta ,\gamma )}[A_{0}]-\frac{\varepsilon }{2}\right) \beta
\left( \log \gamma \left( r_{n}\right) \right) \right) \right\}
\end{equation*}%
\begin{equation*}
\geq \exp \left\{ \alpha ^{-1}\left( \left( \mu _{(\alpha ,\beta ,\gamma
)}[A_{0}]-\varepsilon \right) \beta \left( \log \gamma \left( r_{n}\right)
\right) \right) \right\} ,
\end{equation*}%
which implies
\begin{equation}
\rho _{(\alpha (\log ),\beta ,\gamma )}[f]\geq \mu _{(\alpha ,\beta ,\gamma
)}[A_{0}]=\rho _{(\alpha ,\beta ,\gamma )}[A_{0}].  \label{4.25}
\end{equation}%
On the other hand, by Lemma \ref{l3.7}, we have
\begin{equation*}
\rho _{(\alpha (\log ),\beta ,\gamma )}[f]\leq \max \{\rho _{(\alpha ,\beta
,\gamma )}[A_{j}]:j=0,1,...,k-1\}
\end{equation*}%
\begin{equation}
=\mu _{(\alpha ,\beta ,\gamma )}[A_{0}]=\rho _{(\alpha ,\beta ,\gamma
)}[A_{0}].  \label{4.26}
\end{equation}%
By $\left( \ref{4.25}\right) $ and $\left( \ref{4.26}\right) $, we obtain $%
\rho _{(\alpha (\log ),\beta ,\gamma )}[f]=\mu _{(\alpha ,\beta ,\gamma
)}[A_{0}]=\rho _{(\alpha ,\beta ,\gamma )}[A_{0}]$.\newline

\noindent \qquad The second part of the proof of Theorem \ref{t2.3}
completes the proof of Theorem \ref{t2.6}.

\end{document}